\theoremstyle{theorem}
\newtheorem{thm}{Theorem}[section]
\newtheorem{lemma}[thm]{Lemma}
\newtheorem{prop}[thm]{Proposition}
\newtheorem{corol}[thm]{Corollary}
\newtheorem{result}{Result}[section]
\theoremstyle{remark}
\newtheorem{rmk}{Remark}
\theoremstyle{definition}
\newtheorem{defn}{Definition}
\newtheorem{assumption}{Assumption}
\newtheorem{example}{Example}
\numberwithin{equation}{section}
\newcommand\omicron{o}
\newcommand{\1}{\mathbbm{1}}
\newcommand{\E}{\mathbb{E}}
\renewcommand{\P}{\mathbb{P}}
\newcommand{\Q}{\mathbb{Q}}
\newcommand{\subalign}[1]{%
  \vcenter{%
    \Let@ \restore@math@cr \default@tag
    \baselineskip\fontdimen10 \scriptfont\tw@
    \advance\baselineskip\fontdimen12 \scriptfont\tw@
    \lineskip\thr@@\fontdimen8 \scriptfont\thr@@
    \lineskiplimit\lineskip
    \ialign{\hfil$\m@th\scriptstyle##$&$\m@th\scriptstyle{}##$\crcr
      #1\crcr
    }%
  }
}
\DeclareMathOperator*{\argmin}{arg\,min}
\title{Efficient Rare-Event Simulation for Multiple Jump Events in Regularly Varying Random Walks and Compound Poisson Processes}
\author{
        By Bohan Chen, Jose Blanchet, Chang-Han Rhee \& Bert Zwart \\\\
        Centrum  Wiskunde \& Informatica and Columbia University
}
\date{\today}
\begin{document}
\maketitle

\begin{abstract}
We propose a class of strongly efficient rare event simulation estimators for random walks and compound Poisson processes with a regularly varying increment/jump-size distribution in a general large deviations regime. 
Our estimator is based on an importance sampling strategy that hinges on the heavy-tailed sample path large deviations result recently established in \cite{rheeblanchetzwart2016}. 
The new estimators are straightforward to implement and can be used to systematically evaluate the probability of a wide range of rare events with bounded relative error. 
They are \textquotedblleft universal\textquotedblright\ in the sense that a single importance sampling scheme applies to a very general class of rare events that arise in heavy-tailed systems. 
In particular, our estimators can deal with rare events that are caused by multiple big jumps (therefore, beyond the usual principle of a single big jump) as well as multidimensional processes such as the buffer content process of a queueing network. 
We illustrate the versatility of our approach with several applications that arise in the context of mathematical finance, actuarial science, and queueing theory.
\end{abstract}

\section{Introduction}

In this paper, we develop a strongly efficient importance sampling scheme for
computing rare-event probabilities involving path functionals of heavy-tailed
random walks and compound Poisson processes in a general large deviations
regime. Heavy-tailed distributions play an important role in many man-made
stochastic systems. For example, they accurately model inputs to computer and
communications networks (see e.g.\ \cite{fosskorshunovzachary2013}), and they
are an essential component of the description of many financial risk processes
(see e.g.\ \cite{embrechtsklueppelbergmikosch1997}).

We focus on stochastic processes with a regularly varying increment/jump-size
distribution. The estimators produced with our sampling scheme are
straightforward to implement and can be used to estimate the likelihood of a
wide range of rare events with bounded relative error. In particular, such a
single sampling scheme applies to a very general class of rare events whose
occurrence is caused by one or several components in the system which exhibit extreme behavior, while the rest of the
system is operating in \textquotedblleft normal\textquotedblright%
\ circumstances (therefore, beyond the usual principle of a single big jump).
In particular, our results apply to a large class of continuous functionals of
multiple random walks and compound Poisson processes.

Our estimators are based on importance sampling, a Monte Carlo technique which
consists in biasing the nominal distribution of the underlying process in
order to induce the rare event of interest. The estimator is obtained by
weighting each sample by the corresponding likelihood ratio in order to obtain
unbiased estimators. Our goal is to find biasing techniques leading to
estimators which have bounded coefficient of variation uniformly as the
probability of the event of interest tends to zero in a suitable large
deviations regime. A brief review of importance sampling and the notion of
strong efficiency will be given later in this paper; for a more in-depth
discussion, see \cite{asmussenglynn2007}.

The construction of our sampling scheme is driven by recently developed sample
path large deviations results in \cite{rheeblanchetzwart2016} for regularly varying L\'{e}vy processes and
random walks. Specifically, let
$X(t)$, $t\geq0$ be a one-dimensional compensated compound Poisson process
with unit arrival rate and a positive jump distribution $W$ that is regularly
varying at infinity. 
Define $\bar{X}_{n}=\{\bar{X}_{n}(t)\}_{t\in\lbrack0,1]}%
$, with $\bar{X}_{n}(t)=X(nt)/n$. For a measurable set $A\subseteq\mathbb{D}$
satisfying a specific topological property, the large deviations results
derived in \cite{rheeblanchetzwart2016} establish that
\[
\mathbb{P}(\bar{X}_{n}\in A)=\Theta\left(  \big(n\mathbb{P}(W\geq
n)\big)^{l^{\ast}}\right),
\]
where precise details can be found in Section \ref{SecPR} below. In practice,
exact estimates are often demanded. Hence, we design a sampling scheme for
rare events that take the form $\mathbb{P}(\bar{X}_{n}\in A)$. 
We illustrate our approach with several
applications that arise in the context of mathematical finance, actuarial
science, and queueing theory.

In order to contextualize our contribution, let us provide a review of the
theory and methods which are standard in rare event simulation settings
similar to those studied in this paper.

In the context of stochastic processes with light-tailed characteristics, such
as random walks with increments possessing a finite moment generating function
in a neighborhood of the origin, large deviations theory can be used to bias
the process of interest in order to induce the occurrence of the rare event in
question. 
In fact, it is well known that a conventional type of proof of the asymptotic lower bound in large deviations analysis one can extract an exponential change of measure which can sometimes be proved to be efficient (for
counterexamples see e.g.\ \cite{glassermankou1995} and
\cite{glassermanwang1997}). By connecting the
design of efficient importance sampling estimators with a game theoretic
formulation, \cite{dupuissezerwang2007}, \cite{dupuiswang2004} and
\cite{dupuiswang2005} provide the foundations for the use of large deviations
theory in the construction and analysis of provably efficient rare event
simulation estimators. Moreover, a weakly efficient \textquotedblleft
universal\textquotedblright\ sampler has been proposed by
\cite{dupuiswang2009} for a general class of hitting sets in arbitrary Jackson
network topologies.

The setting of stochastic processes with heavy-tailed increments brings up
additional challenges compared to its light-tailed counterpart discussed in
the previous paragraph. These challenges were exposed in
\cite{asmussenschmidlischmidt1999}. First of all, typically, the asymptotic
conditional distribution of any particular increment given the rare event of
interest converges to the underlying nominal distribution. Intuitively, if a
rare event is caused by a large jump that may occur in a single
\textquotedblleft unlucky\textquotedblright\ increment out of many possible
alternatives, then the chance that any specific increment is precisely the
unlucky one is, naturally, small. So, any particular increment is likely to
behave \textquotedblleft normally\textquotedblright\ and therefore, in
contrast to the light-tailed setting, there is no direct way in which one
might attempt to bias a particular increment in order to stir the process
towards the rare event of interest.

Moreover, as pointed out in \cite{asmussenschmidlischmidt1999}, the asymptotic
description of the most likely way in which a rare event may occur, for
example due to the presence of a single large jump, does not lead to a valid
change of measure for importance sampling because it is possible that several
large jumps (or no large jump at all)\ might actually produce the event of
interest under the nominal dynamics of the system. In other words, the natural
biasing mechanism induced by directly approximating the zero-variance
importance sampling distribution in the heavy-tailed setting assigns zero
probability to events which are possible under the nominal dynamics leading to
an ill-defined likelihood ratio.

The use of state-dependent importance sampling provides a way to deal with
these difficulties. In \cite{blanchetglynn2008}, the authors explain how
approximating Doob's $h$-transform can lead to a feasible change of measure
which produces a strongly efficient importance sampling estimator in the
setting of first passage time probabilities for one dimensional random walks.
A Lyapunov technique was introduced for the analysis of state-dependent
importance sampling estimators. But the direct approximation of Doob's
$h$-transform might be difficult to implement in higher dimensions both
because of sampling implementation challenges and the evaluation of
normalizing constants.

In the setting of one-dimensional compound sums of independent and identically distributed (i.i.d.) regularly varying
random variables, \cite{dupuislederwang2007} produced a state-dependent change
of measure whose normalizing constant is straightforward to implement. Their idea can be
described as follows: each increment is sampled by either the original measure or---with small probability, which is a design parameter---a different measure, which is essentially the original measure conditional on exhibiting a large jump. 
The advantage of the mixture samplers is that sampling implementation challenges
and the evaluation of normalizing constants can often be addressed by choosing
a suitable set of parameters.

Under the setting where the time horizon is growing in large and moderate
deviation schemes, Blanchet and Liu show in \cite{blanchetliu2008} how to use
Lyapunov inequalities to address both the parameter selection while enforcing
a bounded relative error. A key step in the methodology is the
construction of a suitable Lyapunov function (for an illustration of the
technique in multidimensional settings, see \cite{blanchetliu2010}). Blanchet
and Liu suggest using the type of fluid analysis which is prevalent in the
large deviations literature of heavy-tailed stochastic processes (see
e.g.\ \cite{fosskorshunov2006} and \cite{fosskorshunov2012}). However, the
construction of the Lyapunov function and the verification of the Lyapunov
inequality becomes highly non-trivial in settings involving multiple jumps and
the presence of boundaries which are common in queueing systems, for an
example of the types of complications which arise in queueing settings, see
\cite{blanchetglynnliu2007}.

The idea of using mixtures, suggested in \cite{dupuislederwang2007}, is also
used here. But, while \cite{dupuislederwang2007} treats a particular
one-dimensional setting involving a rare event that is caused by a single big
jump during a bounded time horizon, our setting is more general. We allow for
a wide range of events, which might be caused by multiple jumps during a
growing time horizon in a large deviations scaling.

Recall that we are interested in estimating the probability $\mathbb{P}%
(\bar{X}_{n}\in A)$. The concept behind our sampling scheme can be described
as follows. Based on the large deviations results derived in
\cite{rheeblanchetzwart2016}, we construct first an auxiliary set $B^{\gamma}$
that is closely related to the optimization problem given by \eqref{SecMReq3}
below. 
Then, given a fixed mixture probability parameter $w\in(0,1)$, we generate the sample path of
$\bar{X}_{n}$ under the nominal measure.
And, with probability $1-w$ we generate the
sample path of $\bar{X}_{n}$ under the measure $\mathbb{Q}^{\gamma}%
(\,\cdot\,)\triangleq\mathbb{P}(\,\cdot\,|\bar{X}_{n}\in B^{\gamma})$. 
Finally, as a consequence of applying the importance sampling
technique, we scale our samplers with a suitable likelihood ratio given as in
\eqref{SecMReq1} below. It should be noted that the set $A$ can be as general
as in the setting of \cite{rheeblanchetzwart2016}. 
Therefore, our methodological contribution in this paper addresses precisely those types of
difficulties mentioned in the previous paragraphs, such as, multiple jumps, growing time horizon, avoiding the evaluation of normalizing constants, and by-passing the verification of Lyapunov inequalities. 
The advantages of our
sampling scheme are that the new estimators are strongly efficient and
straightforward to implement. Moreover, they are \textquotedblleft
universal\textquotedblright\ in the sense that a single importance sampling
scheme applies to a very general class of rare events involving multiple jumps
that arise in heavy-tailed systems. 
As a final remark, it should be
mentioned that constructing the auxiliary set $B^{\gamma}$ requires choosing a
set of suitable parameters $\gamma$ whose existence is guaranteed by the
topological property we impose on $A$. Hence, one of the main challenges is to
select the set of parameters specifically for each application.

Our mathematical contributions in this paper can be summarized as follows.

\begin{itemize}
\item We propose a simulation algorithm for estimating the rare-event
probability of $\bar{X}_{n}\in A$, together with a sampling scheme for
$\bar{X}_{n}\in\cdot$ given $\bar{X}_{n}\in B^{\gamma}$, which is based on a
rejection sampling with an unconditional acceptance probability bounded away
from zero as $n\to\infty$.

\item By showing the existence of the parameter $\gamma$, we prove the
strong efficiency of our sampling scheme under a very general setting (see
Assumption \ref{SecMRass2} below).

\item We showcase the versatility of the algorithm by illustrating the implementation of the proposed
sampling scheme to the rare-events that arise in finance, actuarial science, and queueing theory. 

\item Especially, in the application to queueing networks (see Section
\ref{SecAAQN} below), we show that the tail index of the rare-event
probability---which usually exhibit a complex boundary
behavior due to the nonlinear nature of the associated Skorokhod mapping---can be determined by solving knapsack problem with nonlinear
constraints.
\end{itemize}

The rest of the paper is organized as follows. Section \ref{SecPR} deals with basic
background and notations required to state our contributions.
Section \ref{SecMR} introduces our estimators and describes the main result.
Applications and numerical implementations are discussed in Section~\ref{SecAAAS}, Section~\ref{SecAABOP}, and Section~\ref{SecAAQN}. All the proofs of results presented in this paper are given
in Section~\ref{SecP}.

\section{Notations and preliminaries}\label{SecPR}
\subsection{Notations}
We start with a summary of notations that will be employed in this paper.
Let $\mathbb{Z}_+$ denote the set of non-negative integers, and let $\mathbb{R}_+$ denote the set of non-negative real numbers. 
Let $A^\circ$ and $A^-$ denote the interior and the closure of $A$, respectively.
Let $(\mathbb{D}_{[0,1],\mathbb{R}},d)$ be the metric space of real-valued RCLL functions on $[0,1]$, denoted by $\mathbb{D}=\mathbb{D}_{[0,1],\mathbb{R}}$, equipped with the Skorokhod $J_1$ metric on $\mathbb D$ that is defined by
\[
d(x,y)=\inf_{\lambda\in\Lambda} ||\lambda-id||_\infty\vee||x\circ\lambda-y||_\infty,\quad x,y\in\mathbb D,
\]
where $id$ denotes the identity mapping, $||\cdot||_\infty$ denotes the uniform metric, i.e., $\|x\|_\infty \triangleq \sup_{t\in[0,1]} |x(t)|$, and $\Lambda$ denotes the set of all strictly increasing, continuous bijections from $[0,1]$ to itself. 
Let $\mathbb{D}^k$ denote the $k$-fold product space of $\mathbb D$.
Let $\mathbb{D}_\uparrow^k$ denote the subset of functions in $\mathbb D^k$ that are non-negative and nondecreasing in each coordinate.
When it comes to the tail indices of a regularly varying distribution, we use $\beta$ (or $\beta_i$ in the multidimensional case) for the right tail and $\alpha$ for the left tail.
Let $\mathbb{D}_l$ denote the subspace of $\mathbb D$ consisting of non-decreasing step functions vanishing at time zero with $l$ jumps, and let $\mathbb{D}_{< l^*}$ denote the subspace of $\mathbb D$ consisting of non-decreasing step functions vanishing at $0$ with at most $l-1$ jumps, i.e.\ $\mathbb{D}_{< l^*}=\bigcup_{l\leq l^*-1}\mathbb{D}_l$.
Define
\[
\mathbb{D}_{<(l_1^*,\ldots,l_d^*)} \triangleq \bigcup_{(l_1,\ldots,l_d)\in I_{<(l_1^*,\ldots,l_d^*)}}\prod_{i=1}^d\mathbb{D}_{l_i},
\]
where
\[
I_{<(l_1^*,\ldots,l_d^*)}\triangleq\left\{ (l_1,\ldots,l_d)\in\mathbb{Z}_+^d\setminus\{(l_1^*,\ldots,l_d^*)\} \,\middle|\, \mathcal{I}(l_1,\ldots,l_d)\leq\mathcal{I}(l_1^*,\ldots,l_d^*) \right\},
\]
and $\mathcal{I}(l_1,\ldots,l_d)\triangleq (\beta_1-1)l_1+\ldots+(\beta_d-1)l_d$.
Define a partial order $\prec$ on $\mathbb Z_+^d$ such that $(l_1,\ldots,l_d)\prec(m_1,\ldots,m_d)$ if and only if $\mathbb{C}_{(l_1,\ldots,l_d)}\varsubsetneq\mathbb{C}_{(m_1,\ldots,m_d)}$, where $\mathbb{C}_{(l_1,\ldots,l_d)}\triangleq\bigcup_{i=1}^d\, \mathbb{D}^{i-1}\times\mathbb{D}_{< l_i}\times\mathbb{D}^{d-i}$.
Define
\begin{align*}
J_{(j_1,\ldots,j_d)} \triangleq \left\{ (l_1,\ldots,l_d)\in\mathbb Z_+^d\setminus I_{<(j_1,\ldots,j_d)} \,\middle|\, (m_1,\ldots,m_d)\prec(l_1,\ldots,l_d) \text{\ implies\ } (m_1,\ldots,m_d)\in I_{<(j_1,\ldots,j_d)} \right\}.
\end{align*}
Let $\mathbb D_{l_-;l_+}$ denote the subspace of the Skorokhod space consisting of step functions vanishing at the origin with exactly $l_-$ downward jumps and $l_+$ upward jumps.
Define
\[
\mathbb D_{<l_-^*;l_+^*} \triangleq \bigcup_{(l_-,l_+)\in I_{<l_-^*;l_+^*}} \mathbb D_{l_-;l_+}
\]
where $I_{<l_-^*;l_+^*} \triangleq \left\{ (l_-,l_+)\in\mathbb Z_+^2\setminus\{(l_-^*,l_+^*)\} \,\middle|\, (\alpha-1)l_- + (\beta-1)l_+\leq(\alpha-1)l_-^* + (\beta-1)l_+^* \right\}$.

Given non-negative sequences of real numbers $x_n$ and $y_n$, we write $x_n=\mathcal{O}(y_n)$, $x_n=\omicron(y_n)$ and $x_n=\Theta(y_n)$, if $\limsup_{n\to\infty} x_n/y_n < \infty$, $\lim_{n\to\infty} x_n/y_n = 0$ and $0 < \liminf_{n\to\infty} x_n/y_n\leq\limsup_{n\to\infty} x_n/y_n < \infty$, respectively.
Given two $\mathbb R$-valued functions $f$ and $g$, we write $f\propto g$, if there exists $c\in\mathbb R$ such that $f=cg$.
For $x=(x_1,\ldots,x_k)$, $y=(y_1,\ldots,y_k)\in\mathbb R^k$, we write $x\leq y$, if $x_i\leq y_i$, for all $i\in\{1,\ldots,k\}$.
Let the cardinality of $S$ be denoted by $| S |$ or $\#S$. Finally, let $\mathcal{C}(S,k)$ and $\mathcal{P}(S,k)$ denote the set of all $k$-combinations and $k$-permutations of a set $S$, respectively. Note that $|C(S,k)| = {|S| \choose k}$ and $|D(S,k)|=|C(S,k)|*k!$ 

To describe the efficiency of a rare-event simulation algorithm, we adopt a widely applied criterion, which requires that the relative mean squared error of the associated estimator is appropriately controlled. 
To be more precise, suppose that we are interested in a sequence of rare events $A_n$, which becomes more and more rare as $n\to\infty$. 
For each $n\in\mathbb{Z}_+$, let $L_n$ be an unbiased estimator of the rare-event probability $\eta_n=\P(A_n)$. 
$L_n$ is said to be strongly efficient if $\mathbb{E}L_n^2=\mathcal{O}(\eta_n^2)$. 
In particular, strong efficiency implies that the number of simulation runs required to estimate the target probability to a given relative accuracy is bounded with respect to (w.r.t.) $n$.

\subsection{Preliminaries}
As we will see, the simulation algorithm that we propose in this paper is constructed based on the asymptotic behavior of rare-event probabilities, therefore we review
some well known large deviations results for scaled L\'evy processes with heavy-tailed L\'evy measures, introduced in \cite{rheeblanchetzwart2016}.
To begin with, let $X$ be a L\'evy process with L\'evy measure $\nu$, where $\nu$ is spectrally positive and regularly varying (at infinity) with index $-\beta<-1$. 
Let $\bar{X}_n\triangleq\left\{ X(nt)/n \right\}_{t\in[0,1]}$ denote the associated scaled process. 
Let $\nu_\beta^l$ denote the restriction of $l$-fold product measure of $\nu_\beta$ to $\{x \in \mathbb R_+^l: x_1 \geq x_2 \geq \ldots \geq x_l\}$, where $\nu_\beta(x,\infty)\triangleq x^{-\beta}$. 
For $l\geq1$, define a (Borel) measure
\[
C_l(\,\cdot\,)\triangleq \E \left[ \nu_\beta^l \Big\{ y\in(0,\infty)^l \,\Big|\, \sum_{i=1}^l y_i \1_{[U_i,1]}\in\,\cdot\, \Big\} \right]
\]
where $U_i$, $i\geq1$ are i.i.d.\ uniformly distributed on $[0,1]$.
Note that $C_l$ is concentrated on $\mathbb D_l$, i.e., $C_l(\mathbb D_l)=1$.
Moreover, we make the convention that $C_0$ is the Dirac measure concentrated on the zero function. 
The following result is useful in designing an efficient algorithm for rare events involving one-dimensional scaled processes.
Throughout the rest of this paper, all measurable sets are understood to be Borel measurable.

\begin{result}[Theorem 3.1 of \cite{rheeblanchetzwart2016}]\label{SecPRresult2}
Suppose that $A$ is a measurable set. If $A$ is bounded away from $\mathbb D_{< l^*}$, where $l^* \triangleq \min \left\{ l\in\mathbb{Z}_+ \,\middle|\, \mathbb{D}_l \cap A \neq \emptyset \right\}<\infty$,
then we have that
\[
C_{l^*}(A^\circ)
\leq
\liminf_{n\rightarrow\infty} \frac{\P(\bar X_n \in A) }{(n \nu[n,\infty))^{l^*}}
\leq
\limsup_{n\rightarrow\infty} \frac{\P(\bar X_n \in A)}{(n \nu[n,\infty))^{l^*}}
\leq
C_{l^*}(A^-).
\]
\end{result}

As one can see in Section \ref{SecAABOP} and Section \ref{SecAAQN} below, plenty of applications can be interpreted as sample-path rare events in a multidimensional setting. 
Therefore, it is particularly interesting to consider large deviations results for multidimensional processes. 
Let $X^{(1)},\ldots, X^{(d)}$ be independent centered one-dimensional L\'evy processes with spectrally positive L\'evy measures $\nu_1(\cdot), \ldots,\nu_d(\cdot)$, respectively, where each $\nu_i$ is regularly varying with index $-\beta_i<-1$ at infinity. 
Moreover, for the finite product of metric spaces, we use the maximum metric; i.e., we use $d_{\mathbb S_1\times\cdots\times\mathbb S_d}((x_1,\ldots,x_d), (y_1,\ldots,y_d)) \triangleq \max_{i=1,\ldots,d}d_{\mathbb S_i}(x_i,y_i) $ for the product $\mathbb S_1\times\cdots\times\mathbb S_d$ of metric spaces $(\mathbb S_i,d_{\mathbb S_i})$. Finally, for $(l_1,\ldots,l_d)\in\mathbb Z_+^d$, we define $C_{l_1} \times\cdots\times C_{l_d}(\cdot)$ (which is concentrated on $\prod_{i=1}^d \mathbb D_{l_i}$) as the product measure of
\[
C_{l_i}(\,\cdot\,)\triangleq \E \left[ \nu_{\beta_i}^{l_i} \Big\{ y\in(0,\infty)^{l_i} \,\Big|\, \sum_{j=1}^{l_i} y_j \1_{[U_j,1]}\in\,\cdot\, \Big\} \right].
\]
Result~\ref{SecPRresult3} states a large deviations result for $d$ dimensional process $\bar X_n(t) \triangleq (X^{(1)}(nt)/n, \ldots, X^{(d)}(nt)/n)$ for $t\in[0,1]$.

\begin{result}[Theorem 3.6 of \cite{rheeblanchetzwart2016}]\label{SecPRresult3}
Suppose that $A$ is measurable. If $A$ is bounded away from $\mathbb{D}_{<(l_1^*,\ldots,l_d^*)}$, where
\begin{equation}\label{SecMReq3}
(l_1^*,\ldots,l_d^*) = \argmin_{\substack{(l_1,\ldots,l_d)\in\mathbb{Z}_+^d\\\prod_{i=1}^d\mathbb{D}_{l_i}\cap A\neq\emptyset}} \mathcal{I}(l_1,\ldots,l_d),
\end{equation}
then we have that
\begin{align*}
C_{l_1^*} \times\cdots\times C_{l_d^*}(A^\circ)
&\leq
\liminf_{n\rightarrow\infty} \frac{\P(\bar X_n \in A) }{\prod_{i=1}^d\big(n\nu_i[n,\infty)\big)^{l_i^*}}\\
&\leq\limsup_{n\rightarrow\infty} \frac{\P(\bar X_n \in A)}{\prod_{i=1}^d\big(n\nu_i[n,\infty)\big)^{l_i^*}}
\leq
C_{l_1^*} \times\cdots\times C_{l_d^*}(A^-).
\end{align*}
\end{result}
Note that the assumption that $A$ is bounded away from $\mathbb{D}_{<(l_1^*,\ldots,l_d^*)}$ guarantees the uniqueness of $(l_1^*,\ldots, l_d^*)$.
Finally, 
we conclude this section with an extension of Result \ref{SecPRresult3}, which will be useful in
constructing an efficient simulation algorithm for heavy-tailed random walks.
Let $S_k$, $k\geq 0$, be a random walk, set $\bar S_n(t) = S_{\lfloor nt \rfloor}/n$, $t\geq 0$, and define $\bar S_n = \{\bar S_n(t)$, $t\in [0,1]\}$. 
Let $\nu_\beta^l$ be as defined above.
Similarly, let $\nu_\alpha^m$ denote the restriction of $m$-fold product measure of $\nu_\alpha$ to $\{x \in \mathbb R_+^m: x_1 \geq x_2 \geq \ldots \geq x_m\}$, where $\nu_\alpha(x,\infty)\triangleq x^{-\alpha}$.
Let $C_{0,0}(\cdot) \triangleq \delta_{\mathbf 0}(\cdot)$ be the Dirac measure concentrated on the zero function. For each $(l_-,l_+)\in \mathbb Z_+^2\setminus \{(0,0)\}$, define a measure (which is concentrated on $\mathbb D_{l_-;l_+}$)
\[
C_{l_-;l_+}(\cdot) \triangleq \E \Big[\nu_\alpha ^{l_-}\times\nu_\beta^{l_+} \{(x,y)\in (0,\infty)^{l_-}\times(0,\infty)^{l_+}:\sum_{i=1}^{l_-} x_i 1_{[U_i,1]} - \sum_{i=1}^{l_+} y_i1_{[V_i,1]}\in \cdot\}\Big],
\]
where $U_i$'s and $V_i$'s are i.i.d.\ uniform on $[0,1]$. 
\begin{result}\label{SecPRresult4}
Suppose that $\P(S_1 \leq -x)$ is regularly varying with index $-\alpha$ and $\P(S_1 \geq x)$ is regularly varying with index $-\beta$. Let $A$ be a measurable set bounded away from $\mathbb D_{< l_-^*;l_+^*}$,
where
\begin{equation}\label{SecMReq40}
(l_-^*,l_+^*) = \argmin_{\substack{(l_-,l_+)\in\mathbb{Z}_+^2\\ \mathbb{D}_{l_-;l_+}\cap A\neq\emptyset}} (\alpha-1)l_- + (\beta-1)l_+.
\end{equation}
Then
\begin{align*}
C_{l_-^*;l_+^*}(A^\circ)
\leq
&\liminf_{n\rightarrow\infty} \frac{\P(\bar S_n \in A) }{(n \P(S_1\leq -n))^{l_-^*} (n \P(S_1\geq n)))^{l_+^*}}  \\
\leq
&\limsup_{n\rightarrow\infty} \frac{ \P(\bar S_n \in A)}{(n \P(S_1\leq -n))^{l_-^*} (n \P(S_1\geq n)))^{l_+^*}}
\leq
C_{l_-^*;l_+^*}(A^-).
\end{align*}
\end{result}

\section{Main results}\label{SecMR}
In this section we present our main results. Although the large deviations results reviewed in Section \ref{SecPR} are stated for L\'evy processes, we focus on compensated compound Poisson process for simulation purposes. 
Let $X$ denote a $d$-dimensional compensated compound Poisson process, and recall that $\bar{X}_n$ is the scaled process with $\bar{X}_n(t)=X(nt)/n$, $t\in[0,1]$. 
For a measurable set $A\in\mathbb D^d$, we are interested in estimating the probability of the event $A_n \triangleq \{\bar{X}_n\in A\}$, when $n$ is large. 
Note that, in view of the law of large numbers, one can expect that $\P(\bar X_n \in A) \to 0$ for $A$'s that are bounded away from the zero function, and hence, $A_n$'s are rare events for large $n$'s. 
In Section \ref{SecASC}, we first illustrate the idea of our algorithm in the special case for $d=1$, where the notations are simpler. In Section \ref{SecEG} we extend this result to general $d$.

\subsection{The one-dimensional case}\label{SecASC}
Let $\{X(t)\}_{t\geq0}$ be a one-dimensional compensated compound Poisson process with jump sizes $\{W(k)\}_{k\geq1}$. That is,
\[X(t)=\sum_{k=1}^{N(t)} W(k)-\lambda t\E W(1)\]
where $\{N(t)\}_{t\geq0}$ is a Poisson process with arrival rate $\lambda$, and let $\bar{X}_n\triangleq\left\{ X(nt)/n \right\}_{t\in[0,1]}$ denote the associated scaled process. Moreover, let $\P(X(1)>x)$ be regularly varying of index $-\beta<-1$. The following assumption is essential for analyzing the asymptotic behavior of the rare-event probability, and hence, deriving the strong efficiency of our estimator. 

\begin{assumption}\label{SecMRass1}
Let $A$ be a measurable set in $\mathbb D$. We assume that $A$ is bounded away from $\mathbb{D}_{< l^*}$, where $l^* = \min \left\{ l\in\mathbb{Z}_+ \,\middle|\, \mathbb{D}_l \cap A \neq \emptyset \right\}$ denotes the minimal number of upward jumps of a step function in $A$. Moreover, assume that $C_{l^*}(A^\circ)>0$.
\end{assumption}

\begin{rmk}
As one can see in Section~\ref{SecAAAS}, \ref{SecAABOP}, and \ref{SecAAQN}, one of the typical settings that arises in applications is that the set $A$ can be written as a finite combination of unions and intersections of $F_1^{-1}(A_1),\ldots,F_m^{-1}(A_m)$, where each $F_i\colon\mathbb{D}\to\mathbb{S}_i$ is a continuous function, and all sets $A_i$ are subsets of a general topological space $\mathbb{S}_i$. If we denote this operation of taking unions and intersections by $\Psi$ (i.e., $A=\Psi ( F_1^{-1}(A_1),\ldots,F_m^{-1}(A_m) )$), then it holds that
\[\Psi\left( F_1^{-1}(A_1^\circ),\ldots,F_m^{-1}(A_m^\circ) \right) \subseteq A^\circ \subseteq A \subseteq A^- \subseteq \Psi\left( F_1^{-1}(A_1^-),\ldots,F_m^{-1}(A_m^-) \right).\]
Hence, $C_{l^*}(A^\circ)>0$ holds if $\hat{T}_{l^*}^{-1}\left( \Psi\left( F_1^{-1}(A_1^\circ),\ldots,F_m^{-1}(A_m^\circ) \right) \right)$ has positive Lebesque measure, where $\hat{T}_{j}\colon \hat{S}_j\to\mathbb{D}_j$ is defined by $\hat{T}_{j}(x,u)\triangleq \sum_{i=1}^j x_i\1_{[u_i,1]}$ for $j\in\mathbb{Z}_+$, and
\[\hat{S}_j\triangleq\left\{ (x,u)\in\mathbb{R}_+^j\times[0,1]^j \,\middle|\, x_1\geq \cdots \geq x_j, 0,1,u_1,\ldots,u_j\text{\ are distinct}\right\}.\]
Analogously, one can derive a sufficient condition for $C_{l_1^*}\times\cdots\times C_{l_d^*}(A^\circ)>0$ (see Assumption \ref{SecMRass2} below). More details about this discussion can be found in Section 3.1 of \cite{rheeblanchetzwart2016}.
\end{rmk}

We design a rare-event simulation algorithm that estimates the probability of $A_n\triangleq\left\{\bar{X}_n\in A\right\}$ efficiently, based on an importance sampling strategy.
To construct an importance distribution, we introduce a constant $\gamma>0$ and define $B_n^\gamma\triangleq \left\{ \bar{X}_n\in B^\gamma \right\}$, where $B^\gamma$ is given by
\[B^\gamma \triangleq \left\{ \xi \,\middle|\, \#\big\{ t \,\big|\, \xi(t)-\xi(t^-)>\gamma \big\}\geq l^* \right\}. \]
In the construction of our rare-event simulation algorithm, we will take advantage of the fact that one can always choose $\gamma$ so that $B_n^\gamma$ is sufficiently ``close'' to $A_n$.
The specific choice of $\gamma$ will be further discussed later in Section~\ref{SecAAAS}, \ref{SecAABOP}, and \ref{SecAAQN} for concrete examples.
Let $\Q_\gamma(\,\cdot\,) \triangleq \P(\,\cdot\,| B_n^\gamma)$ denote the conditional distribution given $\bar{X}_n\in B^\gamma$. One should notice that $d\Q_\gamma/d\P = \P(B_n^\gamma)^{-1}\1_{B_n^\gamma}$.
Moreover, by the Fubini-Tonelli's theorem, a closed-form expression for $\P(B_n^\gamma)$ is given by
\begin{align}
\nonumber\P\left(B_n^\gamma\right)&=e^{-\lambda n}\sum_{i=l^*}^\infty \frac{(\lambda n)^i}{i!} \sum_{j=l^*}^i {i \choose j} \P(W(1)>n\gamma)^j\left(1-\P(W(1)> n\gamma)\right)^{i-j}\\
\nonumber&=e^{-\lambda n}\sum_{j=l^*}^\infty \frac{(\lambda n)^j}{j!}\P(W(1)>n\gamma)^j \sum_{i=j}^\infty \frac{(\lambda n)^{i-j}}{(i-j)!} \left(1-\P(W(1)>n\gamma)\right)^{i-j}\\
&=1-\exp\bigg\{- \lambda n\P(W(1)>n\gamma)\bigg\} \sum_{j=0}^{l^*-1} \frac{(\lambda n)^j}{j!}\P(W(1)> n\gamma)^j.\label{SecMReq4}
\end{align}
From \eqref{SecMReq4} one should recognize that $B_n^\gamma$ can be interpreted as the event of a Poisson distributed random variable with rate $\lambda n\P(W(1)>\gamma n)$ crossing the level $l^*$. Now, let $w\in(0,1)$ be arbitrary but fixed. We propose an importance distribution $\Q_{\gamma,w}$ that is absolutely continuous  w.r.t.\ $\P$ and is given by
\begin{equation}\label{SecMReq10}
\Q_{\gamma,w}(\,\cdot\,)\triangleq w \P(\,\cdot\,) + (1-w)\Q_\gamma(\,\cdot\,).
\end{equation}

We give here an algorithm for generating the sample path of $\bar X_n$ under the probability measure $\Q_\gamma(\,\cdot\,)$. Since $\{\bar X_n\in B^\gamma\}\subseteq\{N(n)\geq l^*\}$, we observe that
\begin{align*}
\Q_\gamma(\bar X_n\in\,\cdot\,)&=\frac{1}{\P(B_n^\gamma)}\P(\bar X_n\in\,\cdot\,,B_n^\gamma)=\sum_{m=l^*}^{\infty} h_m\,\P(\bar X_n\in\,\cdot\,|\,B_n^\gamma,N(n)=m),
\end{align*}
where $h_m=h_m(n)\triangleq \P(B_n^\gamma,N(n)=m)/\P(B_n^\gamma)$ satisfies $\sum_{m\geq l^*} h_m=1$.
Note that $h_m$ can be computed, since
\begin{align*}
\P(B_n^\gamma,N(n)=m)&=\P(B_n^\gamma\,|\,N(n)=m)\P(N(n)=m)\\
&=e^{-\lambda n}\frac{(\lambda n)^m}{m!}\left( \sum_{i=l^*}^m {m \choose i} \P(W(1)>n\gamma)^i\left(1-\P(W(1)>n\gamma)\right)^{m-i} \right).
\end{align*}
Hence, it remains to discuss sampling from $\P(\bar X_n\in\,\cdot\,|\,B_n^\gamma,N(n)=m)$.
It turns out that we can proceed a rejection sampling, where drawing from the proposal distribution can be achieved as follows.
\begin{enumerate}
\item Sample $\{b_k\}_{k\leq l^*}$ uniformly from $\mathcal{C}\left( \left\{1,\ldots,m\right\},l^* \right)$;
\item sample each $W(b_k)$, $k\leq l^*$, conditional on $W(1) > n \gamma$;
\item sample $W(m')$, $m'\leq m$, $m'\notin\{b_k\}_{k\leq l^*}$, under the nominal measure.
\end{enumerate}
Note that the target density $f_{\text{target};m}$, defined by
\[
f_{\text{target};m}\left( w_1,\ldots, w_m \right)dw_1\cdots dw_m \triangleq \P\left( W(1)\in w_1+dw_1,\ldots,W(m)\in w_m+dw_m \,\middle|\, B_n^\gamma,N(n) = m \right),
\]
can be bounded by $M_m f_{\text{proposal};m}\left( w_1,\ldots, w_m \right)$, where
\begin{align*}
f_{\text{target};m}\left( w_1,\ldots, w_m \right)&\propto\P(B_n^\gamma|N(n)=m)^{-1}\prod_{j=1}^{m} \frac{d}{d w_j}\P\left( W(j)\leq w_j \right)\1_{B_n^\gamma}(w_1,\ldots,w_m),
\end{align*}
\begin{align*}
f_{\text{proposal};m}\left( w_1,\ldots, w_m \right)&=\binom{m}{l^*}^{-1}\P\left( W(1) > n \gamma \right)^{-l^*} \prod_{j=1}^{m} \frac{d}{d w_j}\P\left( W(j)\leq w_j \right) \sum_{\substack{(b_1,\ldots,b_{l^*})\in\\\mathcal C(\{1,\ldots,m\},l^*)}} \1_{\{W(b_k)>n\gamma,\forall k\leq l^*\}},
\end{align*}
and hence,
\[
M_m=M_m(n)\triangleq\frac{\binom{m}{l^*}\P(W(1)> n\gamma)^{l^*}}{\P(B_n^\gamma|N(n)=m)}.
\]
Now, it is natural to accept $(W(1),\ldots,W(m))$ with probability
\[
a( W(1),\ldots,W(m) ) = \binom{\#\left\{i\in\{1,\ldots,m\} \,\middle|\, W(i) > n\gamma \right\}}{l^*}^{-1}.
\]
Finally, we are able to formulate the pseudocode for generating the sample path of $\bar X_n$ under $\Q_\gamma$ in Algorithm \ref{SecMRalg20}. Moreover, we show in Proposition \ref{SecMRprop20} that the expected running time of Algorithm \ref{SecMRalg20} is uniformly bounded from above w.r.t.\ $n$.

\begin{algorithm}
\caption{Generating the sample path of $\bar X_n$ under $\Q_\gamma$}
\label{SecMRalg20}
\begin{algorithmic}
\renewcommand{\algorithmicrequire}{\textbf{Input:}}

\REQUIRE $\gamma$

\STATE sample $m \sim h_m$ \COMMENT{$m=m'$ with probability $h_{m'}=\P(N(n)=m' \,|\, B_n^\gamma)$}

\STATE $R\leftarrow \TRUE$

\WHILE{$R=\TRUE$}

\STATE sample $\{b_k\}_{k\leq l^*} \sim \text{unif}\left( \mathcal{C}\left( \left\{1,\ldots,m\right\},k \right) \right)$ \COMMENT{uniform distribution on $\mathcal{C}\left( \left\{1,\ldots,m\right\},k \right)$}

\FOR{$i=1$ to $l^*$}
    \IF{$i\in \{b_k\}_{k\leq l^*}$}
    \STATE sample $W(i)\sim W(1)\,\big|\,W(1) > n \gamma$
    \ELSE
    \STATE sample $W(i)\sim W(1)$
    \ENDIF
\ENDFOR

\STATE $c\leftarrow \#\left\{j\in\{1,\ldots,m\} \,\middle|\, W(j) > n \gamma \right\}$
\STATE $a\leftarrow \binom{c}{l^*}^{-1}$

  \STATE sample $u\sim\text{uniform}[0,1]$
    \IF{$u<a$}
    \STATE $R\leftarrow \FALSE$
    \ELSE
    \STATE $R\leftarrow \TRUE$
    \ENDIF

\ENDWHILE

\RETURN $\bar{X}_n$

\end{algorithmic}
\end{algorithm}

\begin{prop}\label{SecMRprop20}
Let $T_{\text{alg\ref{SecMRalg20}}}(n)$ denote the expected running time of Algorithm \ref{SecMRalg20}. Under the assumption that $W(1)$ is regularly varying of index $-\beta<-1$, we have that $T_{\text{alg\ref{SecMRalg20}}}(n)=\sum_{m\geq l^*} h_m(n) M_m(n)$ is uniformly bounded from above w.r.t.\ $n$, i.e.\ $\max_{n\geq0}T_{\text{alg\ref{SecMRalg20}}}(n)<\infty$.
\end{prop}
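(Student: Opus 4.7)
The plan is to derive a closed-form expression for $T_{\text{alg\ref{SecMRalg20}}}(n)$, reduce it to a ratio involving a Poisson tail probability, and then apply a one-term lower bound together with the heavy-tail asymptotics of $n\P(W(1)>n\gamma)$.

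First I would justify the formula $T_{\text{alg\ref{SecMRalg20}}}(n)=\sum_{m\geq l^*}h_m M_m$ via the standard rejection-sampling identity: given $N(n)=m$, the proposal density dominates the target by the constant $M_m$, so the per-trial acceptance probability equals $\int f_{\text{target};m}/M_m=1/M_m$, and the expected number of proposal draws is $M_m$; averaging over $m\sim h_m$ gives the sum. Substituting the explicit expressions for $h_m$ and $M_m$, the factor $\P(B_n^\gamma\mid N(n)=m)$ cancels between them and leaves
\[
h_m M_m=\frac{\P(N(n)=m)\binom{m}{l^*}\P(W(1)>n\gamma)^{l^*}}{\P(B_n^\gamma)}.
\]
Summing over $m\geq l^*$ and using the $l^*$-th factorial moment identity $\E\bigl[\binom{N(n)}{l^*}\bigr]=(\lambda n)^{l^*}/l^*!$ for $N(n)\sim\text{Poisson}(\lambda n)$, I obtain
\[
T_{\text{alg\ref{SecMRalg20}}}(n)=\frac{\mu_n^{l^*}/l^*!}{\P(B_n^\gamma)},\qquad \mu_n\triangleq\lambda n\,\P(W(1)>n\gamma).
\]

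The key observation is that $\P(B_n^\gamma)$ is itself a Poisson tail probability. By Poisson thinning, the number of jumps of $X$ on $[0,n]$ that exceed the threshold $n\gamma$ is $\text{Poisson}(\mu_n)$, and $B_n^\gamma$ is precisely the event that this count is at least $l^*$ (equivalently, this is exactly what the closed form \eqref{SecMReq4} asserts). Keeping only the $k=l^*$ term in the resulting Poisson series yields $\P(B_n^\gamma)\geq e^{-\mu_n}\mu_n^{l^*}/l^*!$, and hence
\[
T_{\text{alg\ref{SecMRalg20}}}(n)\leq e^{\mu_n}.
\]

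To conclude, regular variation of $\P(W(1)>\cdot)$ with index $-\beta<-1$ forces $\mu_n=\lambda n\,\P(W(1)>n\gamma)\to 0$ as $n\to\infty$, while the trivial bound $\mu_n\leq\lambda n$ handles any finite range of $n$. Therefore $\sup_n\mu_n<\infty$ and $\max_{n\geq 0}T_{\text{alg\ref{SecMRalg20}}}(n)\leq\exp(\sup_n\mu_n)<\infty$, as claimed. There is no substantive obstacle here; the only step requiring any care is the algebraic cancellation that produces the clean ratio $\mu_n^{l^*}/l^*!\big/\P(B_n^\gamma)$, after which the one-term Poisson lower bound finishes the argument.
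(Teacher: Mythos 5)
Your proof is correct, and the first half (the cancellation of $\P(B_n^\gamma\mid N(n)=m)$ between $h_m$ and $M_m$, then the factorial-moment identity $\E\bigl[\binom{N(n)}{l^*}\bigr]=(\lambda n)^{l^*}/l^*!$) reproduces exactly the closed form $T_{\text{alg\ref{SecMRalg20}}}(n)=\mu_n^{l^*}/(l^*!\,\P(B_n^\gamma))$ obtained in the paper. Where you diverge is in the final step: the paper bounds the denominator from below by invoking Result~\ref{SecPRresult2}, i.e.\ the sample-path large deviations estimate $\P(B_n^\gamma)=\Theta\bigl((n\nu[n,\infty))^{l^*}\bigr)$, together with regular variation to match the numerator; this yields $\limsup_n T_{\text{alg\ref{SecMRalg20}}}(n)<\infty$ and the uniform bound then follows from finiteness at each $n$. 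You instead recognize $\P(B_n^\gamma)$ as the Poisson tail $\P(\text{Poisson}(\mu_n)\geq l^*)$ via \eqref{SecMReq4}, keep only the $k=l^*$ term to get $\P(B_n^\gamma)\geq e^{-\mu_n}\mu_n^{l^*}/l^*!$, and conclude $T_{\text{alg\ref{SecMRalg20}}}(n)\leq e^{\mu_n}$, using $\beta>1$ only to ensure $\mu_n\to 0$. Your route is more elementary and self-contained (no appeal to the heavy-tailed sample-path machinery), gives an explicit nonasymptotic bound $e^{\mu_n}$, and makes the role of the hypothesis $\beta>1$ completely transparent; the paper's route is less elementary but is consistent with the overall theme that Result~\ref{SecPRresult2} drives all the estimates in the paper.
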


\begin{proof}
See Section \ref{SecP}.
\end{proof}

In view of the observations we made so far, we propose an estimator $Z_n$ for $\P(A_n)$ that is given by
\begin{equation}\label{SecMReq1}
Z_n = \1_{A_n}\frac{d\P}{d\Q_{\gamma,w}} = \frac{\1_{A_n}}{w+\frac{1-w}{\P(B_n^\gamma)}\1_{B_n^\gamma}}.
\end{equation}
Intuitively, an importance sampling technique is used to get more samples from the interesting region, by sampling from a distribution that overweights the important region. Based on this, the choice of $B_n^\gamma$ can be ``justified'', since $B_n^\gamma$ is mimicking the asymptotic behavior of the probability of interest. However, as one can see in the proof of strong efficiency (see Theorem \ref{SecMRthm1} below), we should analyze the second moment of our estimator to avoid ``backfire'', yielding an estimator with larger or even infinite variance. It turns out that this intuition can be made rigorous by applying Result \ref{SecPRresult2}. We end this section with a theorem regarding to the strong efficiency of our estimator.

\begin{thm}\label{SecMRthm2}
Under Assumption \ref{SecMRass1}, there exists a $\gamma>0$ such that the estimator constructed in \eqref{SecMReq1} is strongly efficient for estimating $\P(A_n)$.
\end{thm}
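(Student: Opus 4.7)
The plan is to carry out a second-moment analysis of $Z_n$. Writing out the Radon--Nikodym derivative, the second moment becomes
\[
\E_{\Q_{\gamma,w}} Z_n^2 = \E_\P\left[\frac{\1_{A_n}}{w + \frac{1-w}{\P(B_n^\gamma)}\1_{B_n^\gamma}}\right] = \underbrace{\frac{\P(A_n\cap B_n^\gamma)\,\P(B_n^\gamma)}{w\P(B_n^\gamma)+(1-w)}}_{T_1} + \underbrace{\frac{\P(A_n\setminus B_n^\gamma)}{w}}_{T_2},
\]
so strong efficiency reduces to showing $T_1+T_2 = \mathcal{O}(\P(A_n)^2)$. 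Setting $p_n \triangleq n\P(W(1)>n)$, Assumption~\ref{SecMRass1} and Result~\ref{SecPRresult2} give $\P(A_n)=\Theta(p_n^{l^*})$, and for any fixed $\gamma>0$ a Taylor expansion of the explicit formula \eqref{SecMReq4} around $q_n^\gamma \triangleq \lambda n\P(W(1)>n\gamma)=\Theta(p_n)$ yields $\P(B_n^\gamma)\sim (q_n^\gamma)^{l^*}/l^*! = \Theta(p_n^{l^*})$. Since the denominator in $T_1$ tends to $1-w>0$, this immediately gives $T_1 = \mathcal{O}(\P(A_n)\P(B_n^\gamma)) = \mathcal{O}(\P(A_n)^2)$.

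The heart of the proof is to control $T_2$; in fact I will show that $\P(A_n\setminus B_n^\gamma)$ is \emph{exponentially} smaller than $p_n^{2l^*}$. Let $N^>(n)$ denote the number of jumps of $X$ on $[0,n]$ exceeding $n\gamma$, and decompose $\bar X_n = S_n^> + \tilde X_n^\leq + r_n$, where $S_n^>$ is the scaled step function built from the $N^>(n)$ big jumps, $\tilde X_n^\leq$ is the compensated compound Poisson process built from the jumps bounded by $n\gamma$, and $r_n(t)=-\lambda t\,\E[W(1)\1_{W(1)>n\gamma}]$ is a drift correction that vanishes uniformly on $[0,1]$ as $n\to\infty$ (since $\beta>1$ implies $\E W(1)<\infty$). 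Let $\delta \triangleq d(A,\mathbb{D}_{<l^*})>0$. On $\{\bar X_n\in A,\ N^>(n)=k\}$ with $k<l^*$ we have $S_n^>\in\mathbb{D}_k\subseteq\mathbb{D}_{<l^*}$, hence $d(\bar X_n, S_n^>)\geq\delta$, which forces
\[
\|\tilde X_n^\leq\|_\infty \geq d(\bar X_n, S_n^>) - \|r_n\|_\infty \geq \delta/2
\]
for all $n$ large. Since $\tilde X_n^\leq$ and $N^>(n)$ are independent by Poisson thinning, and $\tilde X_n^\leq$ is a compensated CPP with jumps uniformly bounded by $\gamma$, a Cramér--Chernoff/exponential-martingale bound yields $\P(\|\tilde X_n^\leq\|_\infty\geq\delta/2)\leq e^{-cn}$ for some $c=c(\gamma,\delta)>0$. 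Summing over $k=0,\dots,l^*-1$ gives $\P(A_n\setminus B_n^\gamma) = \mathcal{O}(e^{-cn}) = o(p_n^{2l^*})$, so $T_2 = o(\P(A_n)^2)$.

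Combining the two estimates yields $\E Z_n^2 = \mathcal{O}(\P(A_n)^2)$, which is the definition of strong efficiency; the argument works for any fixed $\gamma>0$, so the existence clause in the theorem is nonrestrictive. The principal technical obstacle is the Cramér-type sample-path bound for $\tilde X_n^\leq$: since its jumps are uniformly bounded by $\gamma$, the moment generating function of its increments is entire, and the exponential bound follows from a classical exponential martingale combined with Doob's submartingale inequality (or equivalently from a sample-path LDP for light-tailed Lévy processes). Everything else---evaluating $\P(B_n^\gamma)$, applying Result~\ref{SecPRresult2}, and assembling the two estimates---is routine.
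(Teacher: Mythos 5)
Your second-moment calculation and the bound $T_1=\mathcal{O}(\P(A_n)^2)$ (using $\P(B_n^\gamma)=\Theta(p_n^{l^*})$ from \eqref{SecMReq4} and $\P(A_n)=\Theta(p_n^{l^*})$ from Result~\ref{SecPRresult2}) are correct, as is the decomposition $\bar X_n=S_n^>+\tilde X_n^\leq+r_n$ together with the deduction that $A_n\setminus B_n^\gamma$ forces $\|\tilde X_n^\leq\|_\infty\geq\delta/2$ for all large $n$. The gap is the claimed exponential estimate $\P(\|\tilde X_n^\leq\|_\infty\geq\delta/2)\leq e^{-cn}$, and with it the conclusion that any fixed $\gamma>0$ works.

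After scaling, $\tilde X_n^\leq$ has $\Theta(n)$ jumps each bounded by $\gamma$---a \emph{constant}, not a quantity shrinking with $n$---and the truncated jump law remains regularly varying below the cutoff $n\gamma$. Exponential concentration at the constant level $\delta/2$ would require the increments to be $o(1)$ relative to $\delta/2$; here they are of the same order, so the tail of $\|\tilde X_n^\leq\|_\infty$ is still governed by the heavy-tailed ``multiple big jumps'' principle rather than by a Cram\'er rate function. To exceed $\delta/2$ one needs of order $\lceil\delta/(2\gamma)\rceil$ jumps near the cutoff, each of probability $\Theta(p_n)$, which gives $\P(\|\tilde X_n^\leq\|_\infty\geq\delta/2)=\Theta\bigl(p_n^{\lceil\delta/(2\gamma)\rceil}\bigr)$---polynomial in $p_n$, not exponential in $n$. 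The same conclusion emerges if you actually carry out the Chernoff/Bennett optimization for the bounded increments: the optimal tilting parameter grows only logarithmically in $n$, and the resulting exponent is of order $-(\delta/2\gamma)\log\bigl(n/\E[W(1)^2\1_{\{W(1)\leq n\gamma\}}]\bigr)$, again polynomial in $p_n$.

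Consequently $T_2=o(p_n^{2l^*})$ holds only if $\lceil\delta/(2\gamma)\rceil>2l^*$, i.e.\ only after choosing $\gamma$ roughly below $\delta/(4l^*)$. The ``there exists $\gamma$'' clause in the theorem is genuinely restrictive, and this is exactly what the paper's proof handles: Theorem~\ref{SecMRthm2} is proved via Lemma~\ref{SecMRlem1}, which constructs $\gamma$ small enough that $A\cap(B^\gamma)^c$ is bounded away from $\mathbb D_{<l^{**}}$ with $\mathcal I(l^{**})>2\mathcal I(l^*)$, and then invokes Result~\ref{SecPRresult2}; the explicit constraint \eqref{SecAAASeq2} on $\gamma$ in the finite-time ruin example makes this quantitative. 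With the polynomial (rather than exponential) bound and a suitably small $\gamma$, your decomposition-based route does close, and it is close in spirit to the paper's argument---but the $e^{-cn}$ estimate and the ``any $\gamma$'' claim are both wrong and must be withdrawn.
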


\begin{proof}
Analogous to the proof of the more general Theorem \ref{SecMRthm1} presented below, where the existence of $\gamma$ is also discussed.
\end{proof}

\subsection{Extension to general $d$}\label{SecEG}
In this section we extend the results in Section \ref{SecASC} to the case with arbitrary $d$. 
To be precise, let $X\triangleq \left( X^{(1)},\ldots,X^{(d)} \right)$ be a superposition of $d$ independent compensated compound Poisson processes with upward jumps, where $\{N^{(i)}(t)\}$ is a Poisson process with arrival rate $\lambda_i$, and
\[X^{(i)}(t)=\sum_{k=1}^{N^{(i)}(t)} W^{(i)}(k)-\lambda_i t \E W^{(i)}(1).\]
Moreover, let $\P(X^{(i)}(1)>x)$ be regularly varying of index $-\beta_i<-1$ at infinity. 
Finally, let $\bar{X}_n$ denote the corresponding scaled process. As we can see in Result \ref{SecPRresult3}, the large deviations results for $\P \left( \bar{X}_n \in A \right)$ depend heavily on the value of $\mathcal I (l_1^*,\ldots,l_d^*)$, where $(l_1^*,\ldots,l_d^*)$ is as defined in \eqref{SecMReq3}. However, for $c\in\mathbb R$, the grid $(l_1,\ldots,l_d)\in\mathbb Z_+^d$ satisfying $\mathcal I (l_1,\ldots,l_d)=c$ is not unique in general. Therefore assuming $A$ being bounded away from $\prod_{i=1}^d \mathbb D_{<l_i}$ is not sufficient for our purposes. The following assumption, which is slightly different from the one we made in Section \ref{SecASC}, corresponds to the extension of Result \ref{SecPRresult2} to Result \ref{SecPRresult3}.

\begin{assumption}\label{SecMRass2}
Let $A$ be a measurable set. Assume that $A$ is bounded away from $\mathbb{D}_{<(l_1^*,\ldots,l_d^*)}$, where $(l_1^*,\ldots,l_d^*)$
is the unique solution of the minimization problem given by \eqref{SecMReq3}. Moreover, assume that $C_{l_1^*}\times\cdots\times C_{l_d^*}(A^\circ)>0$.
\end{assumption}

If the solution to \eqref{SecMReq3} is not unique, we may partition $A$. 
As in Section \ref{SecASC}, we focus now on constructing the auxiliary set $B^\gamma$ for the importance distribution. 
Define $A_n\triangleq\left\{\bar{X}_n\in A\right\}$ and $B_n^\gamma\triangleq\{\bar{X}_n\in B^\gamma\}$. 
As one can see in the proof of Theorem \ref{SecMRthm1}, controlling the probability of $A_n\cap (B_n^\gamma)^c$ should be taken into account in choosing the auxiliary set $B^\gamma$. 
In the one-dimensional case, letting $B^\gamma$ mimic the optimal path leading to the rare event makes us capable of controlling the relative error of our estimator. 
However, the same strategy will fail in the multidimensional case, since the rare event can be reached through other feasible but not necessarily optimal paths. 
Thus, we require a more complicated construction of $B^\gamma$.

\begin{defn}\label{SecMRdef1}
Let $A$ be a measurable set in $\mathbb{D}^d$, and let $(l_1^*,\ldots,l_d^*)$ denote the unique solution to \eqref{SecMReq3}.
Let $\gamma\in\mathbb R^d$ with $\gamma_i>0$ for all $i\in\{1,\ldots,d\}$, and define
\begin{equation}\label{SecMReq2}
B^\gamma\triangleq\bigcup_{(l_1,\ldots,l_d)\in J_{(l_1^*,\ldots,l_d^*)}} B^{ \gamma;l },
\end{equation}
where $B^{\gamma;l}$ is the set of c\`adl\`ag functions on $\mathbb R^d$ that have greater or equal to than $l_i$ number of jumps with size larger than $\gamma_i$ in its $i$-th coordinate, i.e.,
\[
B^{ \gamma;l } \triangleq \left\{ \left( \xi^{(1)},\ldots,\xi^{(d)} \right)\in\mathbb D^d \,\middle|\, \#\big\{ t \,\big|\, \xi^{(i)}(t)-\xi^{(i)}(t^-)>\gamma_i \big\}\geq l_i,\,\forall i\in\{1,\ldots,d\} \right\}.
\]
\end{defn}

\begin{rmk}
Note that the cardinality of $J_{(l_1^*,\ldots,l_d^*)}$ is finite.
To design a strongly efficient simulation algorithm for estimating $\P(A_n)$, we will take advantages of an important property of $J_{(l_1^*,\ldots,l_d^*)}$.
That is, for all $\xi\in A$ with $A$ being bounded away from $\mathbb D_{<(l_1^*,\ldots,l_d^*)}$, there exists an index $(l_1,\ldots,l_d)\in J_{(l_1^*,\ldots,l_d^*)}$, such that the path of $\xi$ in its $i$-th coordinate is bounded away from $\mathbb D_{<l_i^*}$, for every $i\in\{1,\ldots,d\}$.
An illustration of $J_{(l_1^*,\ldots,l_d^*)}$ can be found in Figure \ref{SecMRfig1}.
\end{rmk}

\begin{figure}[]
\centering

\begin{tikzpicture}

\tikzstyle{axes}=[]
\tikzstyle{important line}=[very thick]
        
\draw[style=help lines, step=0.5cm] (-0.1,-0.1) grid (4.1,2.6);
        
\begin{scope}[style=axes]
\draw[->] (-0.4,0) -- (4.4,0) node[right]  {$l_1^*$} coordinate(x axis);
\draw[->] (0,-0.4) -- (0,2.9)  node[above]  {$l_2^*$} coordinate(y axis);
\end{scope}

\node[label=225:{$0$}] at (0,0) {};
\draw (1,0) node[below]{$2$};
\draw (2,0) node[below]{$4$};
\draw (3,0) node[below]{$6$};
\draw (4,0) node[below]{$8$};
\draw (0,1) node[left]{$2$};
\draw (0,2) node[left]{$4$};

\node[label=270:{$l^*=(2,2)$}] at (1,1) {};
\fill[red] (1,1) circle (3pt);
\fill[red] (0,2) circle (3pt);
\fill[red] (0.5,1.5) circle (3pt);
\fill[red] (3.5,0) circle (3pt);
\fill[red] (2.5,0.5) circle (3pt);
\fill[gray!40,nearly transparent] (-0.4,1.9) -- (-0.4,3) -- (3.1,3) -- (3.1,2.1) -- (4.4,2.1) -- (4.4,-0.1) -- (3.45,-0.1) -- (3.4,0.4) -- (2.9,0.4) -- (2.9,0.9) -- (0.9,0.9) -- (0.9,1.9) -- cycle;

\draw[red,dashed] (-0.1,1.55) -- (3.1,-0.05);

\end{tikzpicture}

\caption{An example of $J_{(l_1^*,\ldots,l_d^*)}$ as in Definition \ref{SecMRdef1}. For $(\beta_1-1)/(\beta_2-1)=2$, a given set $A\in\mathbb{D}^2$ and the corresponding $(l_1^*,l_2^*)=(2,2)$, we mark the elements in $J_{(l_1^*,\ldots,l_d^*)}$ with points. Moreover, the shaded area contains all those points $(l_1,l_2)$ such that $A\cap \mathbb{D}_{l_1}\times\mathbb{D}_{l_2}\neq\emptyset$.}
\label{SecMRfig1}
\end{figure}
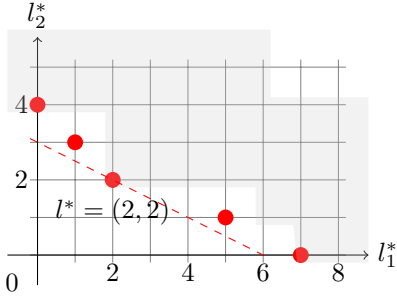

Let $\Q_\gamma(\,\cdot\,) \triangleq \P(\,\cdot\,| B_n^\gamma)$ and let $\Q_{\gamma,w}$ be as defined in \eqref{SecMReq10}, following the same strategy as in Section \ref{SecASC} we propose an estimator that takes the same form as in \eqref{SecMReq1}. Before turning to the efficiency analysis of our estimator, we summarize the findings above in Algorithm \ref{SecMRalg1}.

\begin{algorithm}
\caption{Efficient sampling of $\P\left( \bar{X}_n\in A \right)$}
\label{SecMRalg1}
\begin{algorithmic}
\renewcommand{\algorithmicrequire}{\textbf{Input:}}
\REQUIRE $\gamma\in\mathbb{R}^d$, $w\in(0,1)$
\STATE sample $u \sim \text{uniform}[0,1]$ \COMMENT{uniform distribution on $[0,1]$}
\IF{$u < w$}
\STATE sample $\bar{X}_n \sim \P\left( \bar{X}_n\in\,\cdot\, \right)$ 
\ELSE
\STATE sample $\bar{X}_n \sim \P\left( \bar{X}_n\in\,\cdot\,|\,\bar{X}_n\in B^\gamma \right)$
\ENDIF
\STATE $L\leftarrow \left[w+(1-w)\1_{B_n^\gamma} / \P(B_n^\gamma)\right]^{-1}$
\IF{$\bar{X}_n\in A$}
\RETURN $L$
\ELSE
\RETURN $0$
\ENDIF
\end{algorithmic}
\end{algorithm}

\begin{rmk}\label{SecMRrmk2}
The strategy of choosing $\gamma$ will be explained specifically via the examples presented in Section \ref{SecAAAS}, Section \ref{SecAABOP} and Section \ref{SecAAQN}.
\end{rmk}

In order to complete our algorithm, we need to discuss the computation of $\P(B_n^\gamma)$, as well as the strategy of sampling from the conditional distribution $\Q_\gamma(\,\cdot\,)$.
Since $B^\gamma$ constructed in Definition \ref{SecMRdef1} is the union of $B^{ \gamma;l }$ with $l=(l_1,\ldots,l_d)\in J_{(l_1^*,\ldots,l_d^*)}$, by the inclusion-exclusion principle, it is sufficient to discuss computing the probability of sets of the form $\bigcap_{(l_1,\ldots,l_d)\in I} B^{ \gamma;l }$, where $I$ is a finite collection of elements in $\mathbb Z_+^d$.
It turns out that the probability of such a set can be computed similarly as in Section \ref{SecASC}.
Based on this observation, we give the following proposition.

\begin{prop}\label{SecMRprop1}
The probability of $B_n^\gamma$ is equal to
\[
\sum_{k=1}^{\left| J_{(l_1^*,\ldots,l_d^*)} \right|} \left( (-1)^{k-1}\sum_{\substack{| I |=k\\I\subseteq J_{(l_1^*,\ldots,l_d^*)}}} \prod_{i=1}^d \left( 1-\exp\bigg\{- \lambda_i n\P( W^{(i)}(1)>n\gamma_i)\bigg\} \sum_{j=0}^{\hat l_{i;I}-1} \frac{(\lambda_i n)^j}{j!}\P(W^{(i)}(1)> n\gamma_i)^j \right) \right),
\]
where $\hat l_{i;I}\triangleq\max_{(l_1,\ldots,l_d)\in I}l_i$.
\end{prop}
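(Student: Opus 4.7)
The plan is to prove the formula by a direct inclusion-exclusion argument applied to the union decomposition $B^\gamma = \bigcup_{l\in J_{(l_1^*,\ldots,l_d^*)}} B^{\gamma;l}$ together with the independence of the coordinate processes $X^{(i)}$ and the Poisson thinning property for the big jumps.

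First I would apply inclusion-exclusion: since $|J_{(l_1^*,\ldots,l_d^*)}|<\infty$,
\[
\P(B_n^\gamma) = \sum_{k=1}^{|J_{(l_1^*,\ldots,l_d^*)}|} (-1)^{k-1} \sum_{\substack{|I|=k\\ I\subseteq J_{(l_1^*,\ldots,l_d^*)}}} \P\!\left(\bigcap_{l\in I} B_n^{\gamma;l}\right).
\]
Thus the task reduces to evaluating the intersection probability $\P\!\left(\bigcap_{l\in I} B_n^{\gamma;l}\right)$ for an arbitrary finite collection $I\subseteq\mathbb{Z}_+^d$.

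Next I would simplify that intersection. By the definition of $B^{\gamma;l}$, requiring $\bar{X}_n$ to belong to every $B^{\gamma;l}$ with $l=(l_1,\ldots,l_d)\in I$ is the same as requiring, simultaneously for each coordinate $i\in\{1,\ldots,d\}$, that the number of jumps of $\bar X_n^{(i)}$ of size exceeding $\gamma_i$ is at least $l_i$ for every $l\in I$, i.e.\ at least $\hat l_{i;I}=\max_{l\in I}l_i$. Since the processes $X^{(1)},\ldots,X^{(d)}$ are independent,
\[
\P\!\left(\bigcap_{l\in I} B_n^{\gamma;l}\right) = \prod_{i=1}^d \P\!\left(\#\{t\in[0,1]:\bar X_n^{(i)}(t)-\bar X_n^{(i)}(t^-)>\gamma_i\}\geq \hat l_{i;I}\right).
\]

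Then I would evaluate each factor. A jump of $\bar X_n^{(i)}$ of size exceeding $\gamma_i$ on $[0,1]$ corresponds to a jump of $X^{(i)}$ on $[0,n]$ of size exceeding $n\gamma_i$. By the standard thinning of a compound Poisson process, the number of such jumps is Poisson distributed with parameter $\lambda_i n\P(W^{(i)}(1)>n\gamma_i)$. Hence
\[
\P\!\left(\#\{\text{big jumps in coord.\ }i\}\geq \hat l_{i;I}\right) = 1 - \exp\{-\lambda_i n\P(W^{(i)}(1)>n\gamma_i)\}\sum_{j=0}^{\hat l_{i;I}-1} \frac{(\lambda_i n)^j}{j!}\P(W^{(i)}(1)>n\gamma_i)^j,
\]
which is precisely the $i$-th factor in the stated formula. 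Substituting into the inclusion-exclusion expansion yields the claim.

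None of the steps above present a serious obstacle; the entire argument is bookkeeping around inclusion-exclusion once one observes the two structural facts (intersection collapses to the coordinatewise maxima $\hat l_{i;I}$, and independence across coordinates decouples the joint probability into a product). The only mild subtlety is verifying the first of these, namely that $\bigcap_{l\in I}B^{\gamma;l}$ is exactly the set of paths whose $i$-th coordinate has at least $\hat l_{i;I}$ jumps of size $>\gamma_i$; this is immediate from the definition of $B^{\gamma;l}$ since each constraint is a per-coordinate lower bound on the jump count, and a family of such bounds is equivalent to the pointwise maximum bound.
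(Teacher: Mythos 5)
Your proof is correct and follows essentially the same route as the paper's: inclusion-exclusion over the finite index set $J_{(l_1^*,\ldots,l_d^*)}$, collapse of $\bigcap_{l\in I}B^{\gamma;l}$ to per-coordinate constraints with $\hat l_{i;I}$, factorization by independence of the $X^{(i)}$, and evaluation of each factor via Poisson thinning. No gaps.
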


\begin{proof}
See Section \ref{SecP}.
\end{proof}

\begin{rmk}
It should be mentioned that the complexity of computing $\P(B_n^\gamma)$ can be reduced rapidly in the case, where, for example, one is able to take a smaller (in the sense of cardinality) set than $J_{(l_1^*,\ldots,l_d^*)}$ (see e.g.\ Corollary \ref{SecMRcorol1}, Section \ref{SecAABOP} and \ref{SecAAQN} below).
\end{rmk}

As in Section \ref{SecASC}, we now discuss generating the sample path of $\bar X_n$ under $\Q_\gamma$ in the next step.
To begin with, we need the following lemma, which shows that $B^\gamma$ can be decomposed into finitely many disjoint sets.

\begin{lemma}\label{SecMRlem2}
Let $B^{\gamma;l}(i,j)\triangleq \left\{ \xi\in\mathbb D^d \,\middle|\, \#\big\{ t \,\big|\, \xi^{(i)}(t)-\xi^{(i)}(t^-)>\gamma_i \big\}\geq (l(j))_i \right\}$.
Let the elements in $J_{(l_1^*,\ldots,l_d^*)}$, denoted by $l(1),\ldots,l(| J_{(l_1^*,\ldots,l_d^*)} |)$, be ordered such that $(l(1))_d\leq\cdots\leq (l(| J_{(l_1^*,\ldots,l_d^*)} |))_d$.
Define
\begin{equation}\label{SecMReqDelta}
\Delta B^{\gamma;l}(i,j)\triangleq B^{\gamma;l}(i,j) \setminus \left( \bigcup_{m=1}^{j-1} B^{\gamma;l}(i,m) \right),\quad i\in\{1,\ldots,d-1\}.
\end{equation}
Then, we have that
\begin{align*}
B^\gamma &= \bigcup_{m_1=1}^{| J_{(l_1^*,\ldots,l_d^*)} |} \bigcup_{m_2=1}^{m_1} \cdots \bigcup_{m_{d-1}=1}^{m_{d-2}} \left( \left(\, \bigcap_{i=1}^{d-1} \Delta B^{\gamma;l}(i,m_i) \right) \cap B^{\gamma;l}(d,1) \right).
\end{align*}
\end{lemma}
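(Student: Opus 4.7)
The plan is to prove the identity by a double-containment argument together with a disjointness check. Disjointness is immediate from the construction: for two tuples $(m_1, \ldots, m_{d-1})$ and $(m_1', \ldots, m_{d-1}')$ satisfying the decreasing ordering that differ in some coordinate $i$, the sets $\Delta B^{\gamma;l}(i, m_i)$ and $\Delta B^{\gamma;l}(i, m_i')$ are disjoint by the definition of the difference set in \eqref{SecMReqDelta}, hence so are the resulting intersections.

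For the inclusion $B^\gamma \subseteq \text{RHS}$, I plan to take $\xi \in B^\gamma$, write $n_i(\xi)$ for the number of jumps of $\xi^{(i)}$ exceeding $\gamma_i$, and associate a canonical tuple $(m_1(\xi), \ldots, m_{d-1}(\xi))$ via $m_i(\xi) := \min\{j : n_i(\xi) \geq (l(j))_i\}$ for $i = 1, \ldots, d-1$. Then $\xi \in \Delta B^{\gamma;l}(i, m_i(\xi))$ holds by construction, while $\xi \in B^{\gamma;l}(d,1)$ follows from the chain $(l(1))_d \leq (l(j))_d \leq n_d(\xi)$ with $j$ any index for which $\xi \in B^{\gamma;l(j)}$ (which exists by definition of $B^\gamma$), using that the $l(j)$'s are ordered in their $d$-th coordinate. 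For the reverse inclusion, I would take a path $\xi$ in one of the pieces, read off the coordinate-wise conditions $n_i(\xi) \geq (l(m_i))_i$ for $i < d$ and $n_d(\xi) \geq (l(1))_d$, and use the Pareto-frontier structure of $J_{(l_1^*, \ldots, l_d^*)}$ to exhibit some $l(j^*) \in J$ with $\xi \in B^{\gamma;l(j^*)} \subseteq B^\gamma$.

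The main obstacle I anticipate is the combinatorial step of verifying the monotonicity $m_1(\xi) \geq m_2(\xi) \geq \cdots \geq m_{d-1}(\xi)$ of the canonical tuple, so that it actually indexes a piece appearing in the iterated unions on the right-hand side. This is where the Pareto-frontier structure of $J$ enters crucially: the pairwise incomparability of distinct elements of $J$ under the partial order $\prec$, combined with the ordering of $J$ by the last coordinate, should rule out configurations $m_i(\xi) < m_{i+1}(\xi)$ by forcing $l(m_i(\xi))$ and $l(m_{i+1}(\xi))$ into a comparability relation that contradicts both belonging to $J$. I expect a case analysis on pairs of consecutive indices, or alternatively an induction on $d$ that exploits the chained construction in \eqref{SecMReqDelta} one coordinate at a time, to close the argument; the same combinatorial ingredient should also supply the existence of $l(j^*)$ in the converse direction above.
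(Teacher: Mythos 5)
Your instinct to distrust the monotonicity $m_1(\xi) \geq \cdots \geq m_{d-1}(\xi)$ was correct, but the issue runs deeper than a hard combinatorial step: that monotonicity simply does not follow from the Pareto-frontier structure and the $d$-th-coordinate ordering, and the lemma as stated is in fact false. Concretely, for $d=3$ with $\beta_1=\beta_2=\beta_3$ and $(l_1^*,l_2^*,l_3^*)=(1,1,1)$, both $(4,0,0)$ and $(0,4,0)$ lie in $J_{(1,1,1)}$, so there is no admissible ordering under which $(l(j))_1$ and $(l(j))_2$ are both non-increasing (whichever of the two comes first spoils the other). Taking $l(1)=(0,4,0)$, $l(2)=(4,0,0)$, $l(3)=(2,2,0),\ldots$ (legal, since the third coordinates are non-decreasing) and a path $\xi$ with $(n_1,n_2,n_3)=(2,2,0)\in B^{\gamma;(2,2,0)}$, your canonical indices are $m_1(\xi)=1$ and $m_2(\xi)=2$; so $\xi\in B^\gamma$ lies only in the excluded piece $(1,2)$, and $B^\gamma\not\subseteq\mathrm{RHS}$. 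The converse inclusion fails too, already at $d=2$: with $\beta_1=\beta_2$, $(l_1^*,l_2^*)=(1,1)$ gives $J_{(1,1)}=\{(3,0),(1,1),(0,3)\}$ in the forced ordering, so $B^{\gamma;l}(2,1)$ is the vacuous condition $\{n_2\geq 0\}=\mathbb D^2$, and the right-hand side is $\bigl(\bigsqcup_{m_1}\Delta B^{\gamma;l}(1,m_1)\bigr)\cap\mathbb D^2=\mathbb D^2\supsetneq B^\gamma$; a path with $n_1(\xi)=n_2(\xi)=0$ lies in the piece $m_1=3$ of the claimed partition but in no $B^{\gamma;l(j)}$, so your plan of ``exhibiting some $l(j^\ast)\in J$'' in the reverse direction also cannot be made to work. (A correct $d=2$ identity is $B^\gamma=\bigsqcup_{m_1}\bigl(\Delta B^{\gamma;l}(1,m_1)\cap B^{\gamma;l}(2,m_1)\bigr)$, i.e. $B^{\gamma;l}(d,1)$ should read $B^{\gamma;l}(d,m_1)$.)

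For comparison, the paper's own argument is a formal set-algebraic manipulation rather than a combinatorial one, and it has the same blind spot: it implicitly uses $B^{\gamma;l}(i,j)=\bigcup_{m=1}^j\Delta B^{\gamma;l}(i,m)$ for $i<d$, which is equivalent to $B^{\gamma;l}(i,\cdot)$ being non-decreasing, i.e. to $(l(j))_i$ being non-increasing in $j$ for every $i<d$. This is an extra ordering requirement the lemma's hypothesis does not grant, and (per the $d=3$ example) one that may be impossible to satisfy; the proof also trades $\bigcup_{j=m_1}^{|J|}$ for $\bigcup_{j=1}^{m_1}$ in its third display without justification. So the obstruction you flagged is a genuine defect of the statement, not of your method. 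A valid replacement --- for instance partitioning $B^\gamma$ along an arbitrary enumeration of $J$ via the generic sets $B^{\gamma;l(j)}\setminus\bigcup_{m<j}B^{\gamma;l(m)}$, or keeping a coordinatewise $\Delta$-indexing but with $B^{\gamma;l}(d,\max_i m_i)$ under a strengthened chain hypothesis on $J$ when one exists --- is needed before either proof route can be closed.
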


\begin{proof}
See Section \ref{SecP}.
\end{proof}

Lemma \ref{SecMRlem2} shows that $B^\gamma$ can be decomposed into finitely many disjoint sets.
This implies that
\begin{align*}
\Q_\gamma(\bar X_n\in\,\cdot\,) &= \sum_{m_1=1}^{| J_{(l_1^*,\ldots,l_d^*)} |}\sum_{m_2=1}^{m_1}\cdots\sum_{m_{d-1}=1}^{m_{d-2}} h_{1;m_1,\ldots,m_{d-1}} \,\P(\bar X_n\in\,\cdot\,|\,\bar X_n\in B^\gamma(m_1,\ldots,m_{d-1})),
\end{align*}
where
\[
B^\gamma(m_1,\ldots,m_{d-1}) \triangleq \left(\, \bigcap_{i=1}^{d-1} \Delta B^{\gamma;l}(i,m_i) \right) \cap B^{\gamma;l}(d,1),
\]
and $h_{1;m_1,\ldots,m_{d-1}}\triangleq \P(\bar X_n\in B^\gamma(m_1,\ldots,m_{d-1}))/\P(\bar X_n\in B^{\gamma})$ satisfies that
\[
\sum_{m_1=1}^{| J_{(l_1^*,\ldots,l_d^*)} |}\sum_{m_2}^{m_1}\cdots\sum_{m_{d-1}}^{m_{d-2}} h_{1;m_1,\ldots,m_{d-1}}=1.
\]
Hence, it remains to design a sampling scheme for generating the sample path of $\bar X_n$ under $\P(\,\cdot\,|\, \bar X_n\in B^\gamma(m_1,\ldots,m_{d-1}))$ (for details about generating multi-dimensional discrete random numbers, see e.g.\ \cite{hucui2010}).
Due to the special structure of $B^\gamma(m_1,\ldots,m_{d-1})$, we are able to generate $\bar X_n^{(1)},\ldots,\bar X_n^{(d)}$ independently under $\P(\,\cdot\,|\, \bar X_n\in B^\gamma(m_1,\ldots,m_{d-1}))$.
To see this, first note that sampling $\bar X_n^{(d)}$ is trivial due to the discussion in Section \ref{SecASC}.
Define
\[
\check l(m_i;i)\triangleq\min_{\xi\in \Delta B^{\gamma;l}(i,m_i)} \# \{ t \,|\, \xi(t)-\xi(t^-)>\gamma_i ),
\]
and
\[
\hat l(m_i;i)\triangleq\max_{\xi\in \Delta B^{\gamma;l}(i,m_i)} \# \{ t \,|\, \xi(t)-\xi(t^-)>\gamma_i ),
\]
for $i\in\{1,\ldots,d-1\}$.
By \eqref{SecMReqDelta}, we have that
\begin{align*}
\P(\bar X_n^{(i)}\in\cdot\,|\, \bar X_n\in B^\gamma(m_1,\ldots,m_{d-1}))=\sum_{q_i=\check l(m_i;i)}^{\infty} h_{2;q_i}\,\P(\bar X_n^{(i)}\in\,\cdot \,|\, \Delta B^{\gamma;l}(i,m_i),N^{(i)}(n)=q_i),
\end{align*}
where $h_{2;q_i}\triangleq \P(\Delta B^{\gamma;l}(i,m_i),N^{(i)}(n)=q_i)/\P(\Delta B^{\gamma;l}(i,m_i))$ satisfies $\sum_{q_i\geq \check l(m_i;i)} h_{2;q_i}=1$.
Note that
\begin{align*}
\P(\Delta B^{\gamma;l}(i,m_i),N^{(i)}(n)=q_i)&=e^{-\lambda_i n}\frac{(\lambda n)^{q_i}}{q_i!}\left( \sum_{i=\check l(m_i;i)}^{\hat l(m_i;i)\wedge q_i} {q_i \choose i} \P(W^{(i)}(1)>n\gamma)^i\left(1-\P(W^{(i)}(1)>n\gamma)\right)^{q_i-i} \right).
\end{align*}
Therefore, it suffices to consider sampling $\bar X_n^{(i)}$ under $\P(\,\cdot \,|\, \Delta B^{\gamma;l}(i,m_i),N^{(i)}(n)=q_i)$. Again, we can proceed a similar approach as in Section \ref{SecASC}:
\begin{enumerate}
\item Sample $\{b_k\}_{k\leq l}$ uniformly from $\mathcal{C}\left( \left\{1,\ldots,q_i\right\},\check l(m_i;i) \right)$;
\item sample each $W^{(i)}(b_k)$, $k\leq q_i$, conditional on $W^{(i)}(1) > n \gamma_i$;
\item sample $W^{(i)}(q_i')$, $q_i'\leq q_i$, $q_i'\notin\{b_k\}_{k\leq l^*}$, under the nominal measure.
\item accept $(W^{(i)}(1),\ldots,W^{(i)}(q_i))$ with probability
\[
a( W^{(i)}(1),\ldots,W^{(i)}(q_i) ) = \binom{\#\left\{j\in\{1,\ldots,q_i\} \,\middle|\, W^{(i)}(j) > n\gamma_i \right\}}{\check l(m_i;i)}^{-1}\1_{\{\#\{j\in\{1,\ldots,q_i\} \,|\, W^{(i)}(j) > n\gamma_i \}\leq\hat l(m_i;i)\}}.
\]
\end{enumerate}

Finally, we are able to give the pseudocode of this sampling scheme in Algorithm \ref{SecMRalg2} below. 
For its expected running time, an analogous result to Proposition \ref{SecMRprop20} is formulated in Proposition \ref{SecMRprop2}.

\begin{algorithm}
\caption{Generating the sample path of $\bar X_n^{(1)},\ldots,\bar X_n^{(d)}$ under $\Q_\gamma$}
\label{SecMRalg2}
\begin{algorithmic}
\renewcommand{\algorithmicrequire}{\textbf{Input:}}

\REQUIRE $\gamma\in\mathbb R^d$

\STATE sample $(m_1,\ldots,m_{d-1}) \sim h_{1;m_1,\ldots,m_{d-1}}$

\FOR{$i=1$ to $d$}

\STATE sample $q_i \sim h_{2;q_i}$

\STATE $R\leftarrow \TRUE$

\WHILE{$R=\TRUE$}

\STATE sample $\{b_k\}_{k\leq \check l(m_i;i)} \sim \text{unif}\left( \mathcal{C}\left( \left\{1,\ldots,q_i\right\},\check l(m_i;i) \right) \right)$

\FOR{$j=1$ to $q_i$}
    \IF{$j\in \{b_k\}_{k\leq \check l(m_i;i)}$}
    \STATE sample $W^{(i)}(j)\sim W^{(i)}(1)\,\big|\,W^{(i)}(1) > n \gamma_i$
    \ELSE
    \STATE sample $W^{(i)}(j)\sim W^{(i)}(1)$
    \ENDIF
\ENDFOR

\STATE $c\leftarrow \#\left\{j\in\{1,\ldots,q_i\} \,\middle|\, W^{(i)}(j) > n \gamma_i \right\}$

\IF{$c<\hat l(m_i;i)$}

\STATE $a\leftarrow \binom{c}{\check l(m_i;i)}^{-1}$

\ELSE

\STATE $a\leftarrow 0$

\ENDIF

  \STATE sample $u\sim\text{uniform}[0,1]$
    \IF{$u<a$}
    \STATE $R\leftarrow \FALSE$
    \ELSE
    \STATE $R\leftarrow \TRUE$
    \ENDIF

\ENDWHILE

\ENDFOR

\RETURN $\bar{X}_n^{(1)},\ldots,\bar{X}_n^{(d)}$

\end{algorithmic}
\end{algorithm}

\begin{prop}\label{SecMRprop2}
Let $T_{\text{alg\ref{SecMRalg2}}}(n)$ denote the expected running time of Algorithm \ref{SecMRalg2}. Under the assumption that $W^{(i)}(1)$ is regularly varying of index $-\beta_i<-1$, for all $i\in\{1,\ldots,d\}$, we have that $T_{\text{alg\ref{SecMRalg2}}}(n)$ is uniformly bounded from above w.r.t.\ $n$, i.e.\ $\max_{n\geq0}T_{\text{alg\ref{SecMRalg2}}}(n)<\infty$.
\end{prop}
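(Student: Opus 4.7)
The plan is to reduce the multidimensional claim to $d$ essentially independent applications of the one-dimensional argument behind Proposition \ref{SecMRprop20}. First I would decompose $T_{\text{alg\ref{SecMRalg2}}}(n)$ into (i) the cost of drawing $(m_1,\ldots,m_{d-1})$ from $h_{1;\,\cdot\,}$, whose support has cardinality at most $| J_{(l_1^*,\ldots,l_d^*)} |^{d-1}$ and is therefore independent of $n$, and (ii) for each coordinate $i$, the expected time $T_i(n;m_i)$ of the inner rejection loop that targets $\P(\bar X_n^{(i)}\in\,\cdot\,\,|\,\Delta B^{\gamma;l}(i,m_i))$. Drawing the Poisson count $q_i\sim h_{2;q_i}$ is a standard discrete sampling step that contributes only a constant per call.

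The key observation is that, for each coordinate $i$ and each fixed $m_i$, the inner rejection loop is structurally identical to the sampler of Algorithm \ref{SecMRalg20}, with $l^*$ replaced by $\check l(m_i;i)$, $\lambda$ by $\lambda_i$, and $W(1)$ by $W^{(i)}(1)$, plus an additional indicator $\1_{\{c\leq \hat l(m_i;i)\}}$ in the acceptance probability to enforce the upper constraint defining $\Delta B^{\gamma;l}(i,m_i)$ through \eqref{SecMReqDelta}. Mirroring the derivation in Proposition \ref{SecMRprop20}, the expected number of inner iterations conditional on $q_i$ is bounded by the ratio
\[
M_{q_i}^{(i)}(n) \triangleq \frac{\binom{q_i}{\check l(m_i;i)}\P(W^{(i)}(1)>n\gamma_i)^{\check l(m_i;i)}}{\P(\Delta B^{\gamma;l}(i,m_i)\,|\,N^{(i)}(n)=q_i)},
\]
so that $T_i(n;m_i)\leq \sum_{q_i\geq\check l(m_i;i)} h_{2;q_i}(n)M_{q_i}^{(i)}(n)$.

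The main obstacle, as in the one-dimensional case, is showing $\sup_n T_i(n;m_i)<\infty$. I expect the same mechanism to deliver this. Writing $p_i(n)\triangleq \lambda_i n\P(W^{(i)}(1)>n\gamma_i)$, regular variation with index $-\beta_i<-1$ forces $p_i(n)\to 0$. The Fubini--Tonelli manipulation that yielded \eqref{SecMReq4} collapses the numerator of the above sum into a polynomial in $p_i(n)$ of order $\check l(m_i;i)$. On the other hand, $\P(\Delta B^{\gamma;l}(i,m_i))$ is itself of order $\Theta(p_i(n)^{\check l(m_i;i)})$: the minimum jump count in $\Delta B^{\gamma;l}(i,m_i)$ is exactly $\check l(m_i;i)$ and the upper bound $\hat l(m_i;i)$ coming from the subtraction in \eqref{SecMReqDelta} guarantees that the probability is dominated by the $\check l(m_i;i)$-jump Poisson term, whose constant can be read off by the same closed-form computation used in Proposition \ref{SecMRprop1}. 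The ratio thus stabilizes, yielding $\sup_n T_i(n;m_i)<\infty$. Summing this uniform bound over the finitely many $(m_1,\ldots,m_{d-1})$ and the $d$ coordinates preserves uniform boundedness and gives the claim.
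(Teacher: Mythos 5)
Your proposal is correct and follows exactly the route the paper intends: the paper's own proof of Proposition \ref{SecMRprop2} is simply the one-line remark ``Analogous to the proof of Proposition \ref{SecMRprop20},'' and what you have written is precisely the detailed execution of that analogy — decompose $T_{\text{alg\ref{SecMRalg2}}}$ over the finitely many $(m_1,\ldots,m_{d-1})$ and over the $d$ independent inner rejection loops, identify each loop as a copy of Algorithm~\ref{SecMRalg20} with $l^*\mapsto\check l(m_i;i)$ plus the extra indicator $\1_{\{c\leq\hat l(m_i;i)\}}$, collapse the Poisson mixture to obtain the numerator $p_i(n)^{\check l(m_i;i)}/\check l(m_i;i)!$, and match it against $\P(\Delta B^{\gamma;l}(i,m_i))=\Theta\bigl(p_i(n)^{\check l(m_i;i)}\bigr)$ using $p_i(n)\to 0$.
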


\begin{proof}
Analogous to the proof Proposition \ref{SecMRprop20}.
\end{proof}

The discussion above shows that sampling from the conditional distribution $\Q_\gamma(\,\cdot\,)$ is tractable. As we mentioned in the introduction, our estimator is straightforward to implement. Moreover, its strong efficiency, which is formulated in Theorem \ref{SecMRthm1}, can be proved based on Lemma \ref{SecMRlem1}. Without introducing any new notations, we formulate a corollary to address a special case, where it is sufficient to consider a smaller (in the sense of cardinality) set than $J_{(l_1^*,\ldots,l_d^*)}$ as in Definition \ref{SecMRdef1}.

\begin{thm}\label{SecMRthm1}
Let $B_n^\gamma \triangleq \{\bar X_n\in B^\gamma\}$, where $B^\gamma$ is as defined in \eqref{SecMReq2}. Under Assumption \ref{SecMRass2}, there exists $\gamma$ such that the estimator given by
\[
Z_n = \frac{\1_{A_n}}{w+\frac{1-w}{\P(B_n^\gamma)}\1_{B_n^\gamma}}
\]
is strongly efficient for estimating $\P(A_n)$.
\end{thm}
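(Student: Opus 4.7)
The plan is to bound the second moment of $Z_n$ under $\Q_{\gamma,w}$ directly. Since the density $d\Q_{\gamma,w}/d\P=w+(1-w)\P(B_n^\gamma)^{-1}\1_{B_n^\gamma}$ takes only two values (one on $B_n^\gamma$ and one on its complement), I would split the integral as
\[
\E_{\Q_{\gamma,w}}[Z_n^2] = \int_{A_n}\frac{d\P}{d\Q_{\gamma,w}}\,d\P = \frac{\P(A_n\cap B_n^\gamma)}{w+(1-w)/\P(B_n^\gamma)} + \frac{\P(A_n\cap (B_n^\gamma)^c)}{w} \le \frac{\P(A_n)\,\P(B_n^\gamma)}{1-w} + \frac{\P(A_n\cap (B_n^\gamma)^c)}{w}.
\]
Strong efficiency therefore reduces to showing that, for some $\gamma$, both summands on the right are $\mathcal O(\P(A_n)^2)$.

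For the first summand it is enough to establish $\P(B_n^\gamma)=\mathcal O(\P(A_n))$. Using the closed form provided by Proposition~\ref{SecMRprop1} together with the regular variation relation $n\P(W^{(i)}(1)>n\gamma_i)\sim\gamma_i^{-\beta_i}\cdot n\P(W^{(i)}(1)>n)$, the dominant contribution in the inclusion--exclusion expansion of $\P(B_n^\gamma)$ comes from the singleton $\{l^*\}$: by construction every other $l\in J_{(l_1^*,\ldots,l_d^*)}$ satisfies $\mathcal{I}(l)>\mathcal{I}(l^*)$, so the corresponding products decay strictly faster. This yields $\P(B_n^\gamma)=\Theta\big(\prod_i(n\P(W^{(i)}(1)>n))^{l_i^*}\big)$, which matches the rate of $\P(A_n)$ given by Result~\ref{SecPRresult3} under Assumption~\ref{SecMRass2}. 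Thus the first summand is $\Theta(\P(A_n)^2)$ for any fixed $\gamma>0$, independently of the particular choice.

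The substance of the argument lies in the second summand. My plan is to exhibit $\gamma>0$ and an index $(\tilde l_1,\ldots,\tilde l_d)\in\mathbb Z_+^d$ with $\mathcal{I}(\tilde l_1,\ldots,\tilde l_d)\ge 2\,\mathcal{I}(l_1^*,\ldots,l_d^*)$ such that $A\cap (B^\gamma)^c$ is bounded away (in the product $J_1$ metric) from $\mathbb D_{<(\tilde l_1,\ldots,\tilde l_d)}$. Once this holds, a further application of Result~\ref{SecPRresult3} to $A\cap (B^\gamma)^c$ yields $\P(A_n\cap (B_n^\gamma)^c)=\mathcal O\big(\prod_i(n\P(W^{(i)}(1)>n))^{\tilde l_i}\big)=\mathcal O(\P(A_n)^2)$. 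The intuition is that $B^\gamma$ is engineered to capture every ``efficient'' route into $A$: by the property of $J_{(l_1^*,\ldots,l_d^*)}$ highlighted in the remark after Definition~\ref{SecMRdef1}, every $\xi\in A$ lies near a step function whose coordinatewise jump counts dominate some $l\in J_{(l_1^*,\ldots,l_d^*)}$, and once each $\gamma_i$ is smaller than the minimum effective jump size in that coordinate, $\xi$ is forced into $B^{\gamma;l}\subseteq B^\gamma$. A path that manages to stay in $A$ while simultaneously missing every $B^{\gamma;l}$ must therefore be built out of strictly more jumps than any single $l\in J_{(l_1^*,\ldots,l_d^*)}$ prescribes, and a combinatorial accounting of the minimal such configuration should produce the required $\tilde l$.

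The hard part is this last existence statement: producing a single $\gamma>0$ that works uniformly over $A$ and extracting the factor-two gap in $\mathcal{I}$. Uniformity in $\gamma$ should come from the $\varepsilon$-bounded-awayness of $A$ from $\mathbb D_{<(l_1^*,\ldots,l_d^*)}$ in Assumption~\ref{SecMRass2}, while the doubling of the cost exponent should be extracted from the $\prec$-minimality of the elements of $J_{(l_1^*,\ldots,l_d^*)}$: any defect in the big-jump structure at level $l^*$ can only be compensated by borrowing an additional $l\in J_{(l_1^*,\ldots,l_d^*)}$ worth of jumps, each of which already costs at least $\mathcal{I}(l^*)$. With this uniform $\gamma$ and the accompanying $\tilde l$ in hand, the theorem follows immediately by combining the two bounds above with Result~\ref{SecPRresult3}.
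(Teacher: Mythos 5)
Your reduction of strong efficiency to the two estimates $\P(B_n^\gamma)=\mathcal O(\P(A_n))$ and $\P(A_n\cap(B_n^\gamma)^c)=\mathcal O(\P(A_n)^2)$, via the two-value split of the likelihood ratio, is exactly the paper's argument (the paper packages those estimates as Lemma~\ref{SecMRlem1} and then the theorem follows in two lines). Your treatment of the first summand is sound; whether one argues via the explicit Poisson/inclusion--exclusion calculation, as you do, or by observing that $B^\gamma$ itself is bounded away from $\mathbb D_{<(l_1^*,\ldots,l_d^*)}$ and invoking Result~\ref{SecPRresult3}, as the paper does, one gets $\P(B_n^\gamma)=\Theta\bigl(\prod_i(n\nu_i[n,\infty))^{l_i^*}\bigr)$.

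The genuine gap is exactly where you flag it: producing $\gamma$ and an index $\tilde l$ such that $A\cap(B^\gamma)^c$ is bounded away from $\mathbb D_{<\tilde l}$ with $\mathcal I(\tilde l)$ large enough is the entire technical content, and you do not supply it. Two of your surrounding claims are also off. First, requiring only $\mathcal I(\tilde l)\ge 2\mathcal I(l^*)$ is not sufficient: if equality holds with $\tilde l\neq 2l^*$, the slowly-varying factors in $\prod_i(n\nu_i[n,\infty))^{\tilde l_i}$ and in $\P(A_n)^2$ need not match, and the ratio can diverge; the paper's Lemma~\ref{SecMRlem1} engineers a \emph{strict} inequality $\mathcal I(l^{**})>2\mathcal I(l^*)$ for precisely this reason. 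Second, the heuristic that a defective path must "borrow an additional $l\in J_{(l_1^*,\ldots,l_d^*)}$ worth of jumps" misdescribes the mechanism and would not give the doubling on its own. What actually happens is: by the $r$-bounded-awayness of $A$, for every $\xi\in A$ there is some $l\in J_{(l_1^*,\ldots,l_d^*)}$ such that $\xi^{(i)}$ carries at least an $r$-sized jump mass beyond its $(l_i-1)$ largest jumps; if $\xi\notin B^\gamma$ then in some coordinate the jumps carrying this mass are all smaller than $\gamma_i$, forcing at least on the order of $r/\gamma_i$ of them. It is this count, which grows without bound as $\gamma_i\downarrow 0$, that pushes $\mathcal I$ past $2\mathcal I(l^*)$. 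Making this uniform, choosing $\gamma$ once and for all, verifying the resulting $(l_1^{**},\ldots,l_d^{**})$ is a unique minimizer so that Result~\ref{SecPRresult3} applies to $A\cap(B^\gamma)^c$, and establishing the bounded-awayness from $\mathbb D_{<(l_1^{**},\ldots,l_d^{**})}$ by a $J_1$-approximation argument, is what Lemma~\ref{SecMRlem1}'s proof actually does and what your sketch leaves open.
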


\begin{proof}
See Section \ref{SecP}.
\end{proof}

\begin{lemma}\label{SecMRlem1}
Let $B^\gamma$ be as defined in \eqref{SecMReq2}. Under Assumption \ref{SecMRass2}, we have that
\[\P\big(\bar{X}_n\in B^\gamma\big)=\mathcal{O}\big(\mathbb{P}(\bar{X}_n\in A)\big).\]
Moreover, there exists $\gamma$, such that
\[\P\big(\bar{X}_n\in A\cap(B^\gamma)^c\big)=\omicron\big(\mathbb{P}(\bar{X}_n\in A)^2\big).\]
\end{lemma}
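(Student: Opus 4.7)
The plan is to split the lemma into the $\mathcal O$-bound on $\P(\bar X_n\in B^\gamma)$ and the sharper $\omicron$-bound on $\P(\bar X_n\in A\cap(B^\gamma)^c)$, and treat them separately.

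For the first bound, I would apply Boole's inequality to $B^\gamma=\bigcup_{l\in J_{(l_1^*,\ldots,l_d^*)}}B^{\gamma;l}$. Since the coordinate processes are independent and $B^{\gamma;l}$ is a product event,
\[
\P(\bar X_n\in B^{\gamma;l})=\prod_{i=1}^d\P\bigl(N^{(i)}_{>\gamma_i n}\geq l_i\bigr),
\]
where $N^{(i)}_{>\gamma_i n}$ is Poisson of rate $\lambda_i n\P(W^{(i)}(1)>\gamma_i n)\to 0$ since $\beta_i>1$. Standard Poisson tail asymptotics combined with regular variation yield $\P(B^{\gamma;l})=\Theta\bigl(\prod_i(n\nu_i[n,\infty))^{l_i}\bigr)$. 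By construction of $J_{(l_1^*,\ldots,l_d^*)}$, every $l\in J\setminus\{l^*\}$ satisfies $\mathcal I(l)>\mathcal I(l^*)$, so the $l=l^*$ term dominates the Boole sum and matches $\P(\bar X_n\in A)=\Theta\bigl(\prod_i(n\nu_i[n,\infty))^{l_i^*}\bigr)$ given by Result~\ref{SecPRresult3} under Assumption~\ref{SecMRass2}.

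For the second bound, I would partition $A\cap(B^\gamma)^c$ by the big-jump count vector $K_n\triangleq\bigl(N^{(1)}_{>\gamma_1 n},\ldots,N^{(d)}_{>\gamma_d n}\bigr)$. A short combinatorial check, based on the fact that $J_{(l_1^*,\ldots,l_d^*)}$ consists of the coordinate-wise minimal points outside $I_{<(l_1^*,\ldots,l_d^*)}$, shows that $\bar X_n\in(B^\gamma)^c$ is equivalent to $K_n\in I_{<(l_1^*,\ldots,l_d^*)}$, a \emph{finite} set thanks to $\beta_i>1$. For each $k\in I_<$, the ``big-jump skeleton'' of $\bar X_n$ (the scaled contributions from jumps of size $>\gamma_i n$, plus a vanishing drift from compensation of order $\E[W^{(i)}(1)\1_{W^{(i)}(1)>\gamma_i n}]\to 0$) lies in $\prod_i\mathbb D_{k_i}\subseteq\mathbb D_{<(l_1^*,\ldots,l_d^*)}$ and is therefore at Skorokhod distance at least $\delta\triangleq d(A,\mathbb D_{<(l_1^*,\ldots,l_d^*)})>0$ from $A$. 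Since $d(u+v,u)\leq\|v\|_\infty$ in the $J_1$ metric, the event $\{K_n=k\}\cap A_n$ forces the compensated small-jump residual $Y_n$ to satisfy $\|Y_n\|_\infty\geq\delta/2$ for all sufficiently large $n$.

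To bound $\P(\|Y_n\|_\infty\geq\delta/2)$, I would introduce a finer threshold $\gamma_i'=\gamma_i'(n)\downarrow 0$ and decompose each $Y_n^{(i)}$ into a ``medium'' part (jumps in $(\gamma_i'n,\gamma_i n]$) and a ``tiny'' part (jumps at most $\gamma_i'n$). With $\gamma_i'$ vanishing at a suitable polynomial rate, Bernstein's inequality applied to the compensated tiny-jump martingale (whose scaled jumps are bounded by $\gamma_i'\to 0$) yields super-polynomial decay for the event that the tiny part exceeds $\delta/4$ in supremum. On its complement, the medium-jump part must supply the remaining $\delta/4$, which forces at least $m_i(\gamma_i)\triangleq\lceil\delta/(4\gamma_i)\rceil$ medium jumps in coordinate $i$; by regular variation, this has probability $\mathcal O\bigl(\prod_i(n\nu_i[n,\infty))^{m_i(\gamma_i)}\bigr)$ up to slowly varying corrections. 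Combining with the conditional independence of big and small jumps and $\P(K_n=k)=\Theta\bigl(\prod_i(n\nu_i[n,\infty))^{k_i}\bigr)$, the probability on each $\{K_n=k\}\cap A_n$ is $\mathcal O\bigl(\prod_i(n\nu_i[n,\infty))^{k_i+m_i(\gamma_i)}\bigr)$, and summing over the finite set $I_<$ preserves this order. The main obstacle is the last step: picking $\gamma$ (together with $\gamma'$) so that \emph{simultaneously} the Bernstein bound on the tiny-jump part is super-polynomial in $n$ and $\sum_i(\beta_i-1)m_i(\gamma_i)>2\mathcal I(l^*)$. The finiteness of $I_<$ and the fact that $m_i(\gamma_i)\to\infty$ as $\gamma_i\downarrow 0$ make such a uniform choice of $\gamma$ possible, and once made the final bound is $\omicron\bigl(\prod_i(n\nu_i[n,\infty))^{2l_i^*}\bigr)=\omicron(\P(\bar X_n\in A)^2)$, as required.
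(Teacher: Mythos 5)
Your handling of the first bound essentially matches the paper's: union-bound over $l\in J_{(l_1^*,\ldots,l_d^*)}$, compute $\P(\bar X_n\in B^{\gamma;l})=\Theta\bigl(\prod_i(n\nu_i[n,\infty))^{l_i}\bigr)$ from Poisson tail asymptotics, and observe that $\mathcal I(l)\geq\mathcal I(l^*)$ for $l\in J$, while Result~\ref{SecPRresult3} with $C_{l_1^*}\times\cdots\times C_{l_d^*}(A^\circ)>0$ gives $\P(\bar X_n\in A)=\Theta\bigl(\prod_i(n\nu_i[n,\infty))^{l_i^*}\bigr)$.

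For the $\omicron$-bound you take a genuinely different route. The paper's proof is topological: it shows that for a suitable $\gamma$ the set $A\cap(B^\gamma)^c$ is itself bounded away from $\mathbb D_{<(l_1^{**},\ldots,l_d^{**})}$ with $\mathcal I(l^{**})>2\mathcal I(l^*)$, so that Result~\ref{SecPRresult3} applied to $A\cap(B^\gamma)^c$ does all the probabilistic work. You instead partition on the big-jump count $K_n$ (correctly reducing $(B^\gamma)^c$ to $\{K_n\in I_{<(l_1^*,\ldots,l_d^*)}\}$), exploit independence of the big- and small-jump thinnings, and re-derive the needed upper bound on the small-jump residual by a medium/tiny decomposition — essentially re-proving the relevant piece of the LD upper bound rather than invoking it. That strategy is viable, but there is a real error in the medium-jump bookkeeping. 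A single medium jump (scaled size in $(\gamma_i',\gamma_i]$) has probability of order $n\nu_i[\gamma_i'n,\infty)$, and once you take $\gamma_i'=n^{-\epsilon_i}\downarrow 0$ this differs from $n\nu_i[n,\infty)$ by a factor $\sim(\gamma_i')^{-\beta_i}=n^{\epsilon_i\beta_i}$, which is polynomial, not a \emph{slowly varying} correction as you claim. Consequently the exponent you must beat is not $\sum_i(\beta_i-1)m_i(\gamma_i)>2\mathcal I(l^*)$ but rather, for each coordinate $j$ separately (only one coordinate need carry the residual, and you union-bound over $j$), $m_j(\gamma_j)\bigl((1-\epsilon_j)\beta_j-1\bigr)>2\mathcal I(l^*)$. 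The conclusion is still reachable — take $\epsilon_j<1-1/\beta_j$, then $\gamma_j$ small enough, and the tiny-jump Bernstein bound $\exp(-cn^{\epsilon_j})$ remains super-polynomial — but as written the order-of-magnitude claim for the medium part is incorrect and the final inequality is placed on the wrong quantity.
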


\begin{proof}
See Section \ref{SecP}.
\end{proof}

\begin{corol}\label{SecMRcorol1}
Along with Assumption \ref{SecMRass2}, we assume additionally that there exists an index set $I\subseteq J_{(l_1^*,\ldots,l_d^*)}$ and $r>0$ such that: for every $\xi\in A$, there exists $(l_1,\ldots,l_d)\in I$ satisfying $d\left( \xi,\mathbb{C}_{(l_1,\ldots,l_d)} \right)\geq r$.
Set $\tilde J_{(l_1^*,\ldots,l_d^*)} = I \setminus \Delta I$, where $(l_1,\ldots,l_d)\in \Delta I$ if and only if
\begin{itemize}
\item $(l_1,\ldots,l_d)\in I$ satisfies that $\mathcal{I}(l_1,\ldots,l_d)>2\mathcal{I}(l_1^*,\ldots,l_d^*)$; and
\item for every $(l_1',\ldots,l_d')\in I\setminus\{(l_1,\ldots,l_d)\}$, we have that $\mathcal{I}(l_1,\ldots,l_d)\neq\mathcal{I}(l_1',\ldots,l_d')$.
\end{itemize}
Setting $B_n^\gamma=\{\bar X_n\in B^\gamma\}$ with
\[
B^\gamma\triangleq\bigcup_{(l_1,\ldots,l_d)\in \tilde J_{(l_1^*,\ldots,l_d^*)}} B^{ \gamma;l },
\]
there exists $\gamma$ such that the estimator given by
\[
Z_n = \frac{\1_{A_n}}{w+\frac{1-w}{\P(B_n^\gamma)}\1_{B_n^\gamma}}
\]
is strongly efficient for estimating $\P(A_n)$.
\end{corol}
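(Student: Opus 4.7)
The overall plan is to mirror the proof of Theorem \ref{SecMRthm1} almost verbatim, after verifying that the two asymptotic estimates of Lemma \ref{SecMRlem1} continue to hold when $B^\gamma$ is built from $\tilde J_{(l_1^*,\ldots,l_d^*)}$ instead of the full $J_{(l_1^*,\ldots,l_d^*)}$. Writing the estimator as in \eqref{SecMReq1}, the standard second-moment computation
\[
\E Z_n^2 \;\leq\; \frac{\P(B_n^\gamma)^2}{(1-w)^2} \;+\; \frac{\P\bigl(A_n\cap (B_n^\gamma)^c\bigr)}{w^2}
\]
then yields strong efficiency as soon as one shows $\P(B_n^\gamma)=\mathcal{O}(\P(A_n))$ and $\P(A_n\cap (B_n^\gamma)^c)=\omicron(\P(A_n)^2)$.

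For the first bound, observe that $\tilde J_{(l_1^*,\ldots,l_d^*)} \subseteq I \subseteq J_{(l_1^*,\ldots,l_d^*)}$ by construction, so the set $B^\gamma$ defined via $\tilde J$ is contained in the set $B^\gamma$ defined via the full $J$. Applying Lemma \ref{SecMRlem1} to the latter immediately gives the bound $\P(B_n^\gamma)=\mathcal{O}(\P(A_n))$ for the smaller set, with the same choice of $\gamma$.

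For the second bound, I would first exploit the covering hypothesis on $I$: choose $\gamma$ with $\gamma_i$ sufficiently small relative to $r$ so that $d(\xi,\mathbb{C}_{(l_1,\ldots,l_d)})\geq r$ forces $\xi^{(i)}$ to have at least $l_i$ jumps of size strictly greater than $\gamma_i$, i.e., $\xi\in B^{\gamma;l}$. This entails $A\subseteq\bigcup_{l\in I} B^{\gamma;l}$, and intersecting with $(B_n^\gamma)^c$ yields, via the disjoint decomposition $I = \tilde J_{(l_1^*,\ldots,l_d^*)} \sqcup \Delta I$, the inclusion
\[
A\cap (B_n^\gamma)^c \;\subseteq\; \bigcup_{(l_1,\ldots,l_d)\in \Delta I} B^{\gamma;l}.
\]
For each $(l_1,\ldots,l_d)\in \Delta I$, the asymptotic order $\P(\bar X_n \in B^{\gamma;l}) = \Theta\bigl(\prod_{i=1}^d (n\nu_i[n,\infty))^{l_i}\bigr)$ combined with Result~\ref{SecPRresult3} (giving $\P(A_n)=\Theta\bigl(\prod_i (n\nu_i[n,\infty))^{l_i^*}\bigr)$) reduces the comparison to the polynomial order $n^{2\mathcal{I}(l^*)-\mathcal{I}(l)}$ times a slowly varying factor. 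The defining strict inequality $\mathcal{I}(l)>2\mathcal{I}(l^*)$ drives this ratio to zero, and since $\Delta I$ is finite the sum bound $\P(A_n\cap(B_n^\gamma)^c)=\omicron(\P(A_n)^2)$ follows.

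The main obstacle is the careful verification that $d(\xi,\mathbb{C}_l)\geq r$ indeed yields $\xi\in B^{\gamma;l}$ after the correct calibration of $\gamma$ against $r$ in the Skorokhod $J_1$ metric (which involves a time reparametrization and must be done coordinate-wise); this is where the topological property imposed on $A$ via the covering set $I$ is used and will draw on the metric estimates of \cite{rheeblanchetzwart2016}. The uniqueness requirement in the definition of $\Delta I$ is used precisely to guarantee that each removed index is strictly the unique contributor at its cost level $\mathcal{I}(l)$, so that deleting it cannot shift the asymptotic tier of $\P(B_n^\gamma)$ relative to the original $B^\gamma$ in Lemma~\ref{SecMRlem1}. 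The slowly varying corrections from regular variation are handled, as is standard, by the strictness $\mathcal{I}(l)>2\mathcal{I}(l^*)$.
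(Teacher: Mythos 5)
The high-level skeleton (reproduce the second-moment bound from Theorem~\ref{SecMRthm1}, then verify the two estimates of Lemma~\ref{SecMRlem1}) is the right one, and the bound $\P(B_n^\gamma)=\mathcal{O}(\P(A_n))$ by inclusion into the $J_{(l_1^*,\ldots,l_d^*)}$-based set is fine. However, your argument for $\P(A_n\cap(B_n^\gamma)^c)=\omicron(\P(A_n)^2)$ rests on a deterministic path inclusion that is false, so it does not follow the paper's route and does not go through.

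The problematic step is the claim that, for $\gamma_i$ small relative to $r$, the inequality $d(\xi,\mathbb{C}_{(l_1,\ldots,l_d)})\geq r$ forces $\xi^{(i)}$ to have at least $l_i$ jumps of size strictly greater than $\gamma_i$, hence $A\subseteq\bigcup_{l\in I}B^{\gamma;l}$ and consequently $A\cap(B^\gamma)^c\subseteq\bigcup_{l\in\Delta I}B^{\gamma;l}$. This implication does not hold. Being at $J_1$-distance at least $r$ from $\mathbb{D}_{<l_i}$ only says that $\xi^{(i)}$ cannot be approximated by a vanishing-at-$0$ step function with at most $l_i-1$ jumps; it does not force $l_i$ jumps above any fixed threshold. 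For instance, a staircase $\xi^{(i)}(t)=\sum_{k=1}^{K}(r/K)\1_{[k/(K+1),1]}(t)$ with $K$ large has $\|\xi^{(i)}\|_\infty=r$ and hence $d(\xi^{(i)},\mathbb{D}_0)\geq r$, yet it has no jump of size larger than $r/K$, which is below any fixed $\gamma_i$ once $K$ is large. So for no fixed $\gamma$ does $d(\xi,\mathbb{C}_l)\geq r$ put $\xi$ into $B^{\gamma;l}$, and the inclusion $A\cap(B^\gamma)^c\subseteq\bigcup_{\Delta I}B^{\gamma;l}$ fails. The paper's Lemma~\ref{SecMRlem1} avoids exactly this obstruction: rather than a pathwise inclusion, it shows that $A\cap(B^\gamma)^c$ is \emph{bounded away from} $\mathbb{D}_{<(l_1^{**},\ldots,l_d^{**})}$ for an appropriate $(l_1^{**},\ldots,l_d^{**})$ with $\mathcal{I}(l_1^{**},\ldots,l_d^{**})>2\,\mathcal{I}(l_1^*,\ldots,l_d^*)$, and then invokes Result~\ref{SecPRresult3} to get the $\omicron(\P(A_n)^2)$ bound; the accumulating-small-jump paths that defeat your inclusion are handled probabilistically (they are rare under $\bar X_n$), not combinatorially. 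Your Corollary proof should therefore reproduce that ``bounded away plus Result~\ref{SecPRresult3}'' step with $I$ in place of $J_{(l_1^*,\ldots,l_d^*)}$, rather than replacing it by a set inclusion. Relatedly, the second bullet in the definition of $\Delta I$ (the uniqueness of the cost level $\mathcal{I}(l)$ within $I$) is not there to protect the asymptotic tier of $\P(B_n^\gamma)$: it is needed so that the auxiliary minimization producing $(l_1^{**},\ldots,l_d^{**})$ after removing $\Delta I$ still has a \emph{unique} solution, which is a hypothesis of Result~\ref{SecPRresult3}.
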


\begin{proof}
Analogous to the proofs of Lemma \ref{SecMRlem1} and Theorem \ref{SecMRthm1}.
\end{proof}

\begin{rmk}\label{SecMRrmk1}
Even though our simulation algorithm is constructed in the context of Poisson processes with positive jump distributions, it can be easily generalized to the case, where the jump distributions are regularly varying at both $- \infty$ and $\infty$ (for details see the proof of Theorem 3.5 in \cite{rheeblanchetzwart2016} and the references therein).
\end{rmk}

\subsection{Extension to random walks}

Let $S_k$, $k\geq0$ be a centered random walk with increments $\{Y_k\}_{k\geq1}$. Let $\P(Y_1\leq -x)$ be regularly varying with index $-\alpha$ and let $\P(Y_1 \geq x)$ be regularly varying with index $-\beta$. Define $\bar S_n(t) = S_{\lfloor nt \rfloor}/n$, $t\geq 0$, where
\begin{equation}\label{SecMReq41}
\bar{S}_n(t)=S_{\lfloor nt \rfloor}/n,\quad t\geq0.
\end{equation}
In this subsection, we want to design an efficient simulation algorithm for estimating the probability of $\bar S_n\in A$. As in Section \ref{SecASC} and Section \ref{SecEG}, we make the following assumption for the set $A$.

\begin{assumption}\label{SecMRass3}
Let $A$ be a measurable set. Assume that $A$ is bounded away from $\mathbb{D}_{<l_-^*;l_+^*}$, where $(l_-^*,l_+^*)$ is the unique solution of the minimization problem given by \eqref{SecMReq40}.
Moreover, assume that $C_{l_-^*,l_+^*}(A^\circ)>0$.
\end{assumption}

Then, we construct the auxiliary set $B^\gamma$ as follows.

\begin{defn}\label{SecMRdef2}
Let $(l_-^*,l_+^*)$ denote the unique solution to \eqref{SecMReq40}, and let 
\begin{align*}
J_{l_-^*;l_+^*} \triangleq \left\{ (l_-,l_+)\in\mathbb Z_+^2\setminus I_{<l_-^*;l_+^*} \,\middle|\, (m_-,m_+)\prec(l_-,l_+) \text{\ implies\ } (m_-,m_+)\in I_{<l_-^*;l_+^*} \right\},
\end{align*}
where $I_{<l_-^*;l_+^*}\triangleq\left\{ (l_-,l_+)\in\mathbb Z_+^2\setminus\{(l_-^*,l_+^*)\} \,\middle|\, (\alpha-1)l_- + (\beta-1)l_+\leq(\alpha-1)l_-^* + (\beta-1)l_+^* \right\}$.
For $\gamma_->0$ and $\gamma_+$>0, define
\begin{equation}\label{SecMReq42}
B^\gamma\triangleq\bigcup_{(l_-,l_+)\in J_{l_-^*;l_+^*}} B^{ \gamma;l_-^*;l_+^* },
\end{equation}
where
$B^{ \gamma;l_-^*;l_+^* } \triangleq \left\{ \xi\in\mathbb D \,\middle|\, \#\big\{ t \,\big|\, \xi(t^-)-\xi(t)>\gamma_- \big\}\geq l_-^*,\,\#\big\{ t \,\big|\, \xi(t)-\xi(t^-)>\gamma_+ \big\}\geq l_+^* \right\}$.
\end{defn}

Defining $A_n\triangleq\{\bar S_n\in A\}$ and $B_n^\gamma\triangleq\{\bar S_n\in B^\gamma\}$, we propose an estimator for $\P(\bar S_n\in A)$ that is given by \eqref{SecMReq1}. Note that, computing $\P(\bar S_n\in B^\gamma)$, as well as generating the sample path $\bar S_n$ under $\Q^\gamma$ can be achieved by following similar strategies as in Section \ref{SecASC} and Section \ref{SecEG}. Hence, the details are omitted (for examples, see Section \ref{SecAAAS} and Section \ref{SecAABOP} below). We state the strong efficiency of our estimator in the following theorem.

\begin{thm}\label{SecMRthm3}
Let $B_n^\gamma \triangleq \{\bar X_n\in B^\gamma\}$, where $B^\gamma$ is as defined in \eqref{SecMReq42}. Under Assumption \ref{SecMRass3}, there exists $\gamma_-$ and $\gamma_+$ such that the estimator given by
\[
Z_n = \frac{\1_{A_n}}{w+\frac{1-w}{\P(B_n^\gamma)}\1_{B_n^\gamma}}
\]
is strongly efficient for estimating $\P(A_n)$.
\end{thm}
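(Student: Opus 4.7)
The plan is to follow the blueprint of the proof of Theorem~\ref{SecMRthm1}, substituting Result~\ref{SecPRresult4} for Result~\ref{SecPRresult3} throughout. First, by splitting the expectation on $B_n^\gamma$ and its complement, I would derive the second-moment bound
\[
\E Z_n^2 \leq \frac{\P(A_n)\,\P(B_n^\gamma)^2}{(1-w)^2}+\frac{\P(A_n\cap(B_n^\gamma)^c)}{w^2},
\]
using that the denominator $w+(1-w)/\P(B_n^\gamma)$ inside $Z_n$ is at least $(1-w)/\P(B_n^\gamma)$ on $B_n^\gamma$ and equals $w$ off it. Strong efficiency then reduces to the two-sided analog of Lemma~\ref{SecMRlem1}: namely, (i) $\P(\bar S_n\in B^\gamma)=\mathcal{O}(\P(\bar S_n\in A))$, and (ii) one can choose $\gamma_-,\gamma_+>0$ so that $\P(\bar S_n\in A\cap(B^\gamma)^c)=o(\P(\bar S_n\in A)^2)$.

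For (i), I would observe that each building block $B^{\gamma;l_-;l_+}$ requires at least $l_-$ downward and $l_+$ upward jumps of sizes exceeding $\gamma_-$ and $\gamma_+$, respectively, so its $J_1$-distance to every $\mathbb D_{l_-';l_+'}$ with $l_-'<l_-$ or $l_+'<l_+$ is at least $\min(\gamma_-,\gamma_+)$. The Pareto-minimality defining $J_{l_-^*;l_+^*}$, together with the finiteness of both $I_{<l_-^*;l_+^*}$ and $J_{l_-^*;l_+^*}$, then implies that $B^\gamma$ is bounded away from $\mathbb D_{<l_-^*;l_+^*}$ and that the corresponding minimizer in \eqref{SecMReq40} for the set $B^\gamma$ is again $(l_-^*,l_+^*)$. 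Applying Result~\ref{SecPRresult4} to $B^\gamma$ for the upper bound and to $A$ (using Assumption~\ref{SecMRass3} and $C_{l_-^*;l_+^*}(A^\circ)>0$) for the lower bound yields (i).

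The heart of the argument, and the main obstacle, is (ii). Here the task is to upgrade the qualitative statement that $A$ is bounded away from $\mathbb D_{<l_-^*;l_+^*}$ into a quantitative choice of $\gamma$: any $\xi\in A\cap(B^\gamma)^c$ must fail the jump-count conditions associated with every $(l_-,l_+)\in J_{l_-^*;l_+^*}$, and the plan is to show that for $\gamma_-,\gamma_+$ sufficiently small, such a $\xi$ is then forced to be close to a step function whose cost $(\alpha-1)l_-+(\beta-1)l_+$ strictly exceeds $2[(\alpha-1)l_-^*+(\beta-1)l_+^*]$. Since $J_{l_-^*;l_+^*}$ is a finite Pareto frontier and any $\xi\in\mathbb D$ has only finitely many jumps exceeding a positive threshold, a compactness argument on the finite index set $J_{l_-^*;l_+^*}$ lets one pick $\gamma$ uniformly over $A$. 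Result~\ref{SecPRresult4} applied to $A\cap(B^\gamma)^c$ then delivers (ii), since it gives a polynomial decay with strictly more than twice the rate governing $\P(A_n)$. Substituting (i) and (ii) back into the second-moment bound gives $\E Z_n^2=\mathcal{O}(\P(A_n)^2)$, proving strong efficiency.
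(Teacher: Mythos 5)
The overall architecture of your argument is right, and matches the paper's own (one-line) proof: reduce to the two conditions (i) $\P(\bar S_n\in B^\gamma)=\mathcal O(\P(\bar S_n\in A))$ and (ii) $\P(\bar S_n\in A\cap(B^\gamma)^c)=\omicron(\P(\bar S_n\in A)^2)$, exactly mirroring Lemma~\ref{SecMRlem1}, and then invoke Result~\ref{SecPRresult4} where the one-dimensional/multidimensional Poisson proofs use Result~\ref{SecPRresult2} and Result~\ref{SecPRresult3}. Your reasoning for (i) — that each $B^{\gamma;l_-;l_+}$ is bounded away from lower-complexity step functions, that the argmin for $B^\gamma$ is again $(l_-^*,l_+^*)$, and that the lower bound on $\P(A_n)$ comes from $C_{l_-^*;l_+^*}(A^\circ)>0$ — is fine, and your description of (ii) correctly identifies the key step of Lemma~\ref{SecMRlem1}: choosing $\gamma_-,\gamma_+$ small enough that the index $(l_-^{**},l_+^{**})$ governing $A\cap(B^\gamma)^c$ satisfies $(\alpha-1)l_-^{**}+(\beta-1)l_+^{**}>2[(\alpha-1)l_-^*+(\beta-1)l_+^*]$. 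That step is admittedly compressed, but the paper defers it to Lemma~\ref{SecMRlem1} as well.

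However, there is a genuine error in the second-moment bound, and it is not merely cosmetic. You write a bound on $\E^\P[Z_n^2]$ — with $\P(B_n^\gamma)^2/(1-w)^2$ and $1/w^2$ appearing — but the quantity that must be $\mathcal O(\P(A_n)^2)$ for strong efficiency is the second moment under the \emph{sampling} measure $\Q_{\gamma,w}$. These are different: since $Z_n\leq 1/w$ one has $\E^\P[Z_n^2]\leq w^{-1}\E^{\Q_{\gamma,w}}[Z_n^2]$, so bounding your quantity does not imply the bound you actually need. The correct calculation uses the likelihood-ratio identity
\[
\E^{\Q_{\gamma,w}}\!\left[Z_n^2\right]\;=\;\E^{\Q_{\gamma,w}}\!\left[\1_{A_n}\Bigl(\tfrac{d\P}{d\Q_{\gamma,w}}\Bigr)^{2}\right]\;=\;\E^{\P}\!\left[\1_{A_n}\,\tfrac{d\P}{d\Q_{\gamma,w}}\right]\;=\;\E^{\P}[Z_n],
\]
and then bounds a \emph{single} power of $Z_n$ on $B_n^\gamma$ and its complement, giving
\[
\E^{\Q_{\gamma,w}}[Z_n^2]\;\leq\;\frac{\P(B_n^\gamma)}{1-w}\,\P(A_n)\;+\;\frac{1}{w}\,\P\bigl(A_n\cap(B_n^\gamma)^c\bigr),
\]
with first powers of $\P(B_n^\gamma)$, $w$, and $1-w$. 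Your conditions (i) and (ii) then finish the proof exactly as in Theorem~\ref{SecMRthm1}. So the lemma you reduce to is the right one; the derivation that connects it to strong efficiency needs to pass through the change-of-measure identity rather than squaring under $\P$.
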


\begin{proof}
It is analogous to the proofs of Theorem \ref{SecMRthm1} and Lemma \ref{SecMRlem1}, by using Result \ref{SecPRresult4}.
\end{proof}

With the results presented in this section at hand, we are able to apply our general simulation algorithm to three examples in the next sections. These examples can be found in the applications of mathematical finance, actuarial science and queueing networks.

\section{An application to finite-time ruin probabilities}\label{SecAAAS}

\subsection*{Problem settings}
Let $S_k$, $k\geq0$, be a centered random walk with increments $\{Y_k\}_{k\geq1}$. Moreover, let $\P(Y_1 > x)$ be regularly varying at $+\infty$ with index $-\beta$. For $c\geq0$, define 
\[A_n \triangleq \left\{\max_{0\leq k\leq n}Y_k\leq nb,\,\max_{0\leq k\leq n} S_k-ck \geq na\right\}.\]
Additionally, we make a technical assumption that $a$ is not a multiple of $b$. We are interested in computing $\P(A_n)$. This probability is particularly interesting, since it is related to, for example, insurance, where huge claims may be reinsured and therefore are irrelevant in the sense of estimating the finite time ruin probability of an insurance company.

\subsection*{Large deviations results}
The rare-event probability can be estimated efficiently using the technique introduced in Section \ref{SecMR}.
To see this, define
\[
A\triangleq\left\{ \xi\in\mathbb{D}\colon \sup_{t\in[0,1]}[\xi(t)-ct]\geq a;\,\sup_{t\in[0,1]}[\xi(t)-\xi(t^-)]\leq b \right\},
\]
and $\bar{S}_n\triangleq\{\bar{S}_n(t)\}_{t\in[0,1]}$, where $\bar{S}_n(t)=S_{\lfloor nt \rfloor}/n$ for $t\geq0$. Note that $\P(A_n)=\P\left( \bar{S}_n\in A \right)$.
Set $l^* = \lceil a/b \rceil$. Intuitively, $l^*$ should be the key parameter, as it takes at least $l^*$ jumps of size $b$ to cross level $a$.
This intuition has been made rigorous by Rhee et al.\ in \cite[Section 5.1]{rheeblanchetzwart2016},
where the authors show that $A$ is bounded away from $\mathbb D_{<l^*}$, and hence, $\P(A_n) = \Theta \left(n^{l^*}\P(S_1\geq n)^{l^*}\right)$.

\subsection*{Construction of $B^\gamma$}
Since $A$ is bounded away from $\mathbb{D}_{<l^*}$, we can set
\[
B^\gamma = \left\{ \xi\in\mathbb D \,\middle|\, \#\big\{ t \,\big|\, \xi(t)-\xi(t^-)>\gamma \big\}\geq l^* \right\},
\]
and
\[
B_n^\gamma = \left\{ \bar{S}_n\in B^\gamma \right\} = \left\{ \# \left\{ k\in \{1,\ldots,n\} \,\middle|\, Y_k>n\gamma \right\} \geq l^* \right\},
\]
where $\gamma$ is the parameter that needs to be tuned. For the completeness of our algorithm, we give a closed-form expression for $\P(B_n^\gamma)$. Let $p$ denote the probability of $\P( Y_1>\gamma n )$, then we have that
\begin{equation}\label{SecAAASeq1}
\P(B_n^\gamma)=\sum_{i= l^*}^n {n\choose i}p^i\left(1-p\right)^{n-i}=1-\sum_{i=0}^{ l^*-1} {n\choose i}p^i\left(1-p\right)^{n-i},
\end{equation}
where the latter representation in \eqref{SecAAASeq1} is for numerical purposes.

\subsection*{Choice of $\gamma$}
As we mentioned in Remark \ref{SecMRrmk2}, a strategy of choosing the parameters $\gamma$ needs to be discussed in the next step. From the proof of Theorem \ref{SecMRthm1}, it is sufficient to select $\gamma$ such that $\P\left( A_n\cap (B_n^\gamma)^c \right)=\omicron \left( \P(A_n)^2 \right)$. We propose to select $\gamma$ such that $\left( a-( l^*-1)b \right)/\gamma\notin\mathbb{Z}_+$, and that
\begin{equation}\label{SecAAASeq2}
\left\lceil\frac{a-( l^*-1)b}{\gamma}\right\rceil> l^*+1.
\end{equation}
In view of Theorem \ref{SecMRthm3}, it is sufficient to show that $A\cap (B^\gamma)^c$ is bounded away from $\mathbb D_{<2l^*+1}$ with $\gamma$ satisfying \eqref{SecAAASeq2}.
To see this, choose $\theta$ with $d\left( \theta,\mathbb{D}_{<2l^*+1} \right)<r$.
This implies that there exists $\xi\in\mathbb{D}_{<2l^*+1}$ satisfying $d(\theta,\xi)<r$ and $\xi(t)=\sum_{j=1}^{2l^*} x_j\1_{[u_j,1]}(t)$.
In particular, there exists a homeomorphism $\lambda\colon[0,1]\to[0,1]$ satisfying
\begin{equation}\label{SecAAASeq3}
||\,\lambda-id\,||_{\infty} \vee ||\,\xi\circ\lambda-\theta\,||_{\infty} < r.
\end{equation}
For $\theta\in A$, using \eqref{SecAAASeq3} and the identity $\xi\circ\lambda = \theta + \left( \xi\circ\lambda-\theta \right)$, we conclude that the following holds:
\begin{enumerate}
\item $x_j<b+2r$, for every $j\in\{1,\ldots,2l^*\}$; and
\item there exists $t'$ such that
\[\sum_{u_j\leq1}x_j\geq\sum_{u_j\leq\lambda(t')}x_j>a-2r.\]
\end{enumerate}
This implies that
\[\sum_{j\geq l^*}x_j>a-2r-(l^*-1)(b+2r).\]
Moreover, for $\theta \in (B^\gamma)^c$, every jump of $\xi$ should be bounded by $\gamma+2r$ after having $l^*-1$ jumps with size bigger than $b$. Due to the fact that $\gamma$ satisfies \eqref{SecAAASeq2} and that $a$ is not a multiple of $b$, we obtain the result by choosing $r$ sufficiently small.

\subsection*{Sampling from $\Q_\gamma$}
Summarizing the discussion from previous paragraphs, we are able to propose a strongly efficient estimator for $\P(A_n)$ that is given by \eqref{SecMReq1}. As the last ingredient of our simulation algorithm, a strategy of sampling from $\Q_\gamma(\,\cdot\,)$ ($=\P(\,\cdot\,|B_n^\gamma)$) needs to be discussed. We use a similar strategy as in Algorithm \ref{SecMRalg2} and formulate the pseudocode in Algorithm \ref{SecAAASalg1}.

\begin{algorithm}
\caption{}
\label{SecAAASalg1}
\begin{algorithmic}
\STATE $R\leftarrow \TRUE$
\WHILE{$R=\TRUE$}
\STATE sample $(i_1,\ldots,i_{l^*})$ uniformly from $\mathcal{C}(\{1,\ldots,n\},l^*)$
\FOR{$j=1$ to $n$}
\IF{$j\in\{ i_1,\ldots,i_{l^*}\}$}
\STATE sample $Y_j\sim Y_1 \,|\, \gamma n < Y_1 \leq bn$
\ELSE
\STATE sample $Y_j\sim Y_1$
\ENDIF
\ENDFOR
  \STATE sample $u\sim\text{uniform}[0,1]$
  \STATE $c\leftarrow \#\left\{ m\in\{1,\ldots,n\} \,|\, \gamma n < Y_1 \leq bn \right\}$
  \STATE $a\leftarrow \binom{c}{l^*}$
  
    \IF{$u<a^{-1}$}
    \STATE $R\leftarrow \FALSE$
    \ELSE
    \STATE $R\leftarrow \TRUE$
    \ENDIF
\ENDWHILE
\end{algorithmic}
\end{algorithm}

\subsection*{Numerical results}
Finally, we investigate our algorithm numerically based on a concrete example. Let $Y_1 = Y_1'-\E Y_1'$, where $\P(Y_1'>t)=(t_r/t)^{\beta}$, i.e.\ $Y_1'$ follows a Pareto distribution with scale parameter $t_r$ and shape parameter $\beta$. Let $t_r=1$, $a=2$ and $b=1.2$. In Table \ref{sec2tab1} we select $c=0.05$, $w=0.05$, $\gamma=0.13$ and summarize the estimated probability and the level of precision (ratio between the radius of the 95\% confidence interval and the estimated value) for different combinations of $n$ and $\beta$ (based on $200000$ samples). We observe that, for different values of $\beta$, the precision stays roughly constant as $n$ grows. This confirms our theoretical results.

\begin{table}
\resizebox{\textwidth}{!}{
\centering
\begin{tabular}{lllllll}
\hline
\begin{tabular}[c]{@{}l@{}}Est\\ PR\end{tabular} & $n=1100$ & $n=1400$ & $n=1700$ & $n=2000$ & $n=2300$ & $n=2600$
\\ \hline
$\beta=1.45$&\begin{tabular}[c]{@{}l@{}}$2.188\times10^{-4}$\\$0.052$\end{tabular}&\begin{tabular}[c]{@{}l@{}}$1.691\times10^{-4}$\\$0.054$\end{tabular}&\begin{tabular}[c]{@{}l@{}}$1.376\times10^{-4}$\\$0.055$\end{tabular}&\begin{tabular}[c]{@{}l@{}}$1.204\times10^{-4}$\\$0.055$\end{tabular}&\begin{tabular}[c]{@{}l@{}}$1.076\times10^{-4}$\\$0.055$\end{tabular}&\begin{tabular}[c]{@{}l@{}}$9.603\times10^{-5}$\\$0.055$\end{tabular}
\\ \hline \hline
\begin{tabular}[c]{@{}l@{}}Est\\ PR\end{tabular} & $n=1100$ & $n=1400$ & $n=1700$ & $n=2000$ & $n=2300$ & $n=2600$\\
\hline
$\beta=1.60$&\begin{tabular}[c]{@{}l@{}}$3.243\times10^{-5}$\\$0.062$\end{tabular}&\begin{tabular}[c]{@{}l@{}}$2.335\times10^{-5}$\\$0.064$\end{tabular}&\begin{tabular}[c]{@{}l@{}}$1.509\times10^{-5}$\\$0.071$\end{tabular}&\begin{tabular}[c]{@{}l@{}}$1.324\times10^{-5}$\\$0.069$\end{tabular}&\begin{tabular}[c]{@{}l@{}}$1.150\times10^{-5}$\\$0.069$\end{tabular}&\begin{tabular}[c]{@{}l@{}}$8.809\times10^{-6}$\\$0.071$\end{tabular}
\\ \hline \hline
\begin{tabular}[c]{@{}l@{}}Est\\ PR\end{tabular} & $n=2400$ & $n=2600$ & $n=2800$ & $n=3000$ & $n=3200$ & $n=3400$
\\ \hline
$\beta=1.75$&\begin{tabular}[c]{@{}l@{}}$8.903\times10^{-7}$\\$0.095$\end{tabular}&\begin{tabular}[c]{@{}l@{}}$8.416\times10^{-7}$\\$0.092$\end{tabular}&\begin{tabular}[c]{@{}l@{}}$7.261\times10^{-7}$\\$0.094$\end{tabular}&\begin{tabular}[c]{@{}l@{}}$6.166\times10^{-7}$\\$0.096$\end{tabular}&\begin{tabular}[c]{@{}l@{}}$5.605\times10^{-7}$\\$0.096$\end{tabular}&\begin{tabular}[c]{@{}l@{}}$5.576\times10^{-7}$\\$0.093$\end{tabular}
\\ \hline \hline
\begin{tabular}[c]{@{}l@{}}Est\\ PR\end{tabular} & $n=3000$ & $n=3500$ & $n=4000$ & $n=4500$ & $n=5000$ & $n=5500$
\\ \hline
$\beta=1.90$&\begin{tabular}[c]{@{}l@{}}$5.813\times10^{-8}$\\$0.119$\end{tabular}&\begin{tabular}[c]{@{}l@{}}$3.994\times10^{-8}$\\$0.125$\end{tabular}&\begin{tabular}[c]{@{}l@{}}$3.045\times10^{-8}$\\$0.126$\end{tabular}&\begin{tabular}[c]{@{}l@{}}$2.766\times10^{-8}$\\$0.120$\end{tabular}&\begin{tabular}[c]{@{}l@{}}$2.077\times10^{-8}$\\$0.126$\end{tabular}&\begin{tabular}[c]{@{}l@{}}$1.758\times10^{-8}$\\$0.126$\end{tabular}
\\ \hline
\end{tabular}}
\caption{Estimated rare-event probability and level of precision for the application as described in Section \ref{SecAAAS} w.r.t.\ different combinations of $n$ and $\beta$.}
\label{sec2tab1}
\end{table}

\section{An application in barrier option pricing}\label{SecAABOP}
In this section we consider an application that arises in the context of financial mathematics; in particular we consider a down-in barrier option (see Section 11.3 in \cite{tankovcont2015}).

\subsection*{Problem settings}
Let $S_k$, $k\geq0$, be a centered random walk with increments $\{Y_k\}_{k\geq1}$. 
Let $\P(Y_1\leq -x)$ be regularly varying with index $-\alpha$ and let $\P(Y_1 \geq x)$ be regularly varying with index $-\beta$. 
Let $a$, $b$ and $c$ be positive real numbers. 
We provide a strongly efficient estimator for the probability of
\[
A_n \triangleq \left\{S_n \geq bn,\min_{0\leq k\leq n} S_k+c k \leq -an\right\},
\]
which can be interpreted as the chance of exercising a down-in barrier option. 
This application is interesting, since, as we will see, the large deviations behavior of $\P(A_n)$ is caused by two large jumps.

\subsection*{Large deviations results}
To begin with, define
\[A\triangleq\left\{\xi\in\mathbb{D}:\,\xi(1)\geq b,\,\inf_{0\leq t\leq1}\xi(t)+c t\leq-a\right\}.\]
Obviously, we have that $(l_-^*,l_+^*)=(1,1)$, where $(l_-^*,l_+^*)$ denotes the solution to \eqref{SecMReq40}.
To verify the topological property of $A$, we define $m,\pi_1\colon\mathbb{D}\to\mathbb{R}$ by $m(\xi)=\inf_{0\leq t\leq1} \{\xi(t)+c t\}$, and $\pi_1(\xi)=\xi(1)$.
Note that $F$, $\pi_1$ and $m$ are continuous, therefore
\[F^{-1}(A)=m^{-1}(-\infty,-a]\cap \pi_1^{-1}[b,\infty)\]
is a closed set.
By adapting the results in \cite[Section 5.2]{rheeblanchetzwart2016}, it can be shown that, for any arbitrary $i\geq0$, $\mathbb{D}_{i;0}$ and $\mathbb{D}_{0;i}$ are bounded away from $m^{-1}(-\infty,-a]$ and $\pi_1^{-1}[b,\infty)$, respectively.
Hence, $A$ is bounded away from $\mathbb D_{<1;1}$. Applying Result \ref{SecPRresult4}, we obtain that $\P(\bar X_n \in A) = \Theta\left( n^2\P(S_1\geq n)\P(S_1\leq n) \right)$.

\subsection*{Construction of $B^\gamma$}
Now we are in the framework of Theorem \ref{SecMRthm3}. Note that, by Definition \ref{SecMRdef2}, we have that $J_{1;1}=\{(1,1),(l,0),(0,m)\}$, where
\[l=\min \left\{ l'\in\mathbb{Z}_+ \,\middle|\, (l'-1)(\beta-1)>(\alpha-1) \right\},\quad m=\min \left\{ m'\in\mathbb{Z}_+ \,\middle|\, (m'-1)(\alpha-1)>(\beta-1) \right\}.\]
However, adapting the idea behind Corollary \ref{SecMRcorol1} together with the fact that $A$ is bounded away from both $\mathbb{D}_{i;0}$ and $\mathbb{D}_{0;i}$, it is sufficient to consider $\tilde J_{1;1}=\{(1,1)\}$. Hence, we can set
\[B^\gamma=\left\{ \xi\in\mathbb D \,\middle|\, \#\big\{ t \,\big|\, \xi(t^-)-\xi(t)>\gamma_- \big\}\geq 1,\,\#\big\{ t \,\big|\, \xi(t)-\xi(t^-)>\gamma_+ \big\}\geq 1 \right\}.\]
As we mentioned in the introduction, it is possible that estimators may be crafted specifically for the events of interest, in order to obtain (up to constant factors) better performance.
Due to the fact that at least one downward jump should happen \textbf{\textit{before}} upward jumps, without introducing new notations, we can modify $B^\gamma$ such that
\[B^\gamma=\left\{ \xi\in\mathbb D \,\middle|\, \exists\,t_1<t_2:\,\xi(t_1^-)-\xi(t_1)>\gamma_-,\, \xi(t_2)-\xi(t_2^-)>\gamma_+ \right\}.\]
This implies that $B_n^\gamma=\big\{\exists\,i<j:\,Y_i<-\gamma_- n,\, Y_j>\gamma_+ n \big\}$.
By a straightforward computation, we obatin that
\[
\P\left( B_n^\gamma \right) = 1-\frac{p_2}{p_2-p_1}\,(1-p_1)^n+\frac{p_1}{p_2-p_1}\,(1-p_2)^n,
\]
where $p_1\triangleq\P(Y_1 > \gamma_+ n)$ and $p_2\triangleq\P(Y_1 < -\gamma_- n)$.

\subsection*{Choice of $\gamma_-$ and $\gamma_+$}
We discuss here the strategy of choosing the parameters $\gamma_-$ and $\gamma_+$. From the proof of Theorem \ref{SecMRthm1}, it is sufficient to select $\gamma_-$, $\gamma_+$ such that $\P\left( A_n\cap (B_n^\gamma)^c \right)=\omicron \left( \P(A_n)^2 \right)$.
Hence, we propose to choose $\gamma_-$ and $\gamma_+$ such that $\left( (a+b)/\gamma_+,a/\gamma_- \right)\notin\mathbb{Z}_+^2$, and that
\begin{equation}\label{SecAABOPeq3}
\min\left\{(\alpha-1)+\left\lceil\frac{a+b}{\gamma_+}\right\rceil(\beta-1),\left\lceil\frac{a}{\gamma_-}\right\rceil(\alpha-1)+(\beta-1)\right\}>2(\alpha+\beta-2).
\end{equation}
W.l.o.g.\ we assume that $\lceil a/\gamma_- \rceil(\alpha-1)+(\beta-1)$ is the unique minimum of \eqref{SecAABOPeq3}.
It suffices to prove that $A\cap (B^\gamma)^c$ is bounded away from $\mathbb{D}_{<\lceil a/\gamma_2 \rceil;1}$.
To show that $\bigcup_{(l_-,l_+)} \mathbb{D}_{<l_-;l_+}$ with $l_-\leq \lceil a/\gamma_2 \rceil-1$ is bounded away from $A\cap (B^\gamma)^c$, choose $\theta$ with $d\left( \theta,\bigcup_{(l_-,l_+)} \mathbb{D}_{<l_-;l_+} \right)<r$. 
This implies that there exists $\xi \in \bigcup_{(l_-,l_+)} \mathbb{D}_{<l_-;l_+}$ satisfying $d(\theta,\xi)<r$, where $\xi=\sum_{k=1}^{l_+} x_k\1_{[u_k,1]}(t) - \sum_{k=1}^{l_-} y_k\1_{[v_k,1]}(t)$. 
In particular, there exists homeomorphism $\lambda\colon[0,1]\to[0,1]$ satisfying
\begin{equation}\label{SecAABOPeq4}
||\,\lambda-id\,||_{\infty} \vee ||\,\xi\circ\lambda-\theta\,||_{\infty} < r.
\end{equation}
Using \eqref{SecAABOPeq4} and the identity $\xi\circ\lambda = \theta + \left( \xi\circ\lambda-\theta \right)$, we conclude that, for $\theta \in (B^\gamma)^c$ and $t\in[0,1]$, at least one of the following holds:
\begin{itemize}
\item $x_k \leq \gamma_++2r$, for every $u_k\geq t$; or
\item $y_k \leq \gamma_-+2r$, for every $v_k< t$.
\end{itemize}
For $\theta \in m^{-1}(-\infty,-a]$, by \eqref{SecAABOPeq4} there exists $t'$ such that
\begin{equation}\label{SecAABOPeq5}
\sum_{u_j\leq \lambda(t')} x_j - \sum_{v_j\leq \lambda(t')} y_j < -a+3r.
\end{equation}
Moreover, we can assume that $y_j\leq \gamma_- + 2r$ for $j$ satisfying $v_j\leq \lambda(t')$.
Otherwise $x_j$ is bounded by $\gamma_+ + 2r$ for $j$ satisfying $v_j > \lambda(t')$.
By choosing $r$ sufficiently small, this leads to a contradiction of that $\theta \in \pi_1^{-1}[b,\infty)$ and $\lceil a/\gamma_- \rceil(\alpha-1)+(\beta-1)$ is the minimum of \eqref{SecAABOPeq3}.
Hence, \eqref{SecAABOPeq5} implies that
\[
\left( \left\lceil \frac{a}{\gamma_-} \right\rceil-1 \right) (\gamma_-+2r) > a-3r.
\]
Since $\left( \lceil a/\gamma_- \rceil-1 \right)\gamma_-<a$, choosing $r$ sufficiently small we obtain the result.
Similarly, it can be can shown that $A\cap (B^\gamma)^c$ is bounded away from $\bigcup_{(l_-,l_+)} \mathbb{D}_{<l_-;l_+}$ for $l_+\leq \lceil (a+b)/\gamma_+ \rceil-1$.

\subsection*{Sampling from $\Q_\gamma$}
Summarizing the discussion in the previous paragraphs, we are able to propose a strongly efficient estimator for $\P(A_n)$ that is given by \eqref{SecMReq1}. As in Section \ref{SecAAAS}, a strategy of sampling from $\Q_\gamma(\,\cdot\,)$ needs to be discussed. Even though $B^\gamma$ is modified to obtain smaller relative error, a similar strategy as in Algorithm \ref{SecMRalg2} can be used here. We formulate the pseudocode in Algorithm \ref{SecAABOPalg1}.

\begin{algorithm}
\caption{Sampling from $\Q_\gamma$: a modification of Algorithm \ref{SecMRalg2}}
\label{SecAABOPalg1}
\begin{algorithmic}
\renewcommand{\algorithmicrequire}{\textbf{Input:}}

\REQUIRE $\gamma_-$, $\gamma_+$
\STATE $R\leftarrow \TRUE$
\WHILE{$R=\TRUE$}
\STATE sample $(i_1,i_2)$ uniformly from $\mathcal{P}(\{1,\ldots,n\},2)$
\FOR{$j=1$ to $n$}
\IF{$j\neq i_1$ \AND $j\neq i_2$}
\STATE sample $Y_j\sim Y_1$
\ELSIF{$j=i_1$}
\STATE sample $Y_{i_1}\sim Y_1 \,|\, Y_1 < -\gamma_- n$
\ELSE
\STATE sample $Y_{i_2}\sim Y_2 \,|\, Y_2 > \gamma_+ n$
\ENDIF
\ENDFOR
  \STATE sample $u\sim\text{uniform}[0,1]$
  \IF{$(Y_1,\ldots,Y_n)\in B_n^\gamma$}
  \STATE $a\leftarrow \left( \#\{ (l,m)\in\{1,\ldots,n\}^2 \,|\, Y_l < -\gamma_- n,\,Y_m > \gamma_+ n \} \right)^{-1}$
  \ELSE
  \STATE $a\leftarrow0$
  \ENDIF
    \IF{$u<a$}
    \STATE $R\leftarrow \FALSE$
    \ELSE
    \STATE $R\leftarrow \TRUE$
    \ENDIF
\ENDWHILE
\end{algorithmic}
\end{algorithm}

\subsection*{Numerical results}
We end this section with a numerical investigation (based on $200000$ samples).
Let $Y_1 = Y_1'-\E Y_1'$, where $Y_1'$ is a random variable with density function $f_Y$ that is given by
\[
f_Y=p_1 \left(\frac{t_r}{y}\right)^\beta \1_{(t_r,\infty)}(y) + p_2 \left(\frac{t_l}{y}\right)^\alpha \1_{(-\infty,t_l)}(y) + (1-p_1-p_2)\,\frac{1}{t_r-t_l}\1_{[t_l,t_r]}(y),
\]
where $t_r>0$ and $t_l<0$.
In the following example we choose $t_r=-t_l=1$, $p_1=p_2=1/3$, $a=2$ and $b=1.5$.
In Table \ref{sec1tab3} we compare the estimated rare-event probability and precision w.r.t.\ different values of $n$, $\alpha$ and $\beta$. We observe that the precision stays roughly constant as $n$ increases for different combinations of $\alpha$ and $\beta$, which suggests the strong efficiency of our estimator.

\begin{table}
\resizebox{\textwidth}{!}{
\centering
\begin{tabular}{lllllll}
\hline
\begin{tabular}[c]{@{}l@{}}Est\\ PR\end{tabular} & $n=250$ & $n=500$ & $n=750$ & $n=1000$ & $n=1250$ & $n=1500$
\\ \hline
$\alpha=2,\hspace{6.5pt}\beta=1.5$&\begin{tabular}[c]{@{}l@{}}$3.913\times10^{-7}$\\$0.043$\end{tabular}&\begin{tabular}[c]{@{}l@{}}$1.370\times10^{-7}$\\$0.043$\end{tabular}&\begin{tabular}[c]{@{}l@{}}$6.992\times10^{-8}$\\$0.044$\end{tabular}&\begin{tabular}[c]{@{}l@{}}$4.539\times10^{-8}$\\$0.044$\end{tabular}&\begin{tabular}[c]{@{}l@{}}$3.305\times10^{-8}$\\$0.044$\end{tabular}&\begin{tabular}[c]{@{}l@{}}$2.471\times10^{-8}$\\$0.044$\end{tabular}
\\ \hline
$\alpha=1.8,\beta=1.7$&\begin{tabular}[c]{@{}l@{}}$3.322\times10^{-7}$\\$0.037$\end{tabular}&\begin{tabular}[c]{@{}l@{}}$1.154\times10^{-7}$\\$0.037$\end{tabular}&\begin{tabular}[c]{@{}l@{}}$6.040\times10^{-8}$\\$0.038$\end{tabular}&\begin{tabular}[c]{@{}l@{}}$3.840\times10^{-8}$\\$0.038$\end{tabular}&\begin{tabular}[c]{@{}l@{}}$2.870\times10^{-8}$\\$0.038$\end{tabular}&\begin{tabular}[c]{@{}l@{}}$2.225\times10^{-8}$\\$0.037$\end{tabular}
\\ \hline
$\alpha=2.3,\beta=2$&\begin{tabular}[c]{@{}l@{}}$1.923\times10^{-9}$\\$0.053$\end{tabular}&\begin{tabular}[c]{@{}l@{}}$4.004\times10^{-10}$\\$0.053$\end{tabular}&\begin{tabular}[c]{@{}l@{}}$1.491\times10^{-10}$\\$0.054$\end{tabular}&\begin{tabular}[c]{@{}l@{}}$7.601\times10^{-11}$\\$0.054$\end{tabular}&\begin{tabular}[c]{@{}l@{}}$4.632\times10^{-11}$\\$0.054$\end{tabular}&\begin{tabular}[c]{@{}l@{}}$3.072\times10^{-11}$\\$0.054$\end{tabular}
\\ \hline
$\alpha=2.7,\beta=1.8$&\begin{tabular}[c]{@{}l@{}}$6.838\times10^{-10}$\\$0.068$\end{tabular}&\begin{tabular}[c]{@{}l@{}}$1.121\times10^{-10}$\\$0.070$\end{tabular}&\begin{tabular}[c]{@{}l@{}}$4.092\times10^{-11}$\\$0.070$\end{tabular}&\begin{tabular}[c]{@{}l@{}}$2.079\times10^{-11}$\\$0.069$\end{tabular}&\begin{tabular}[c]{@{}l@{}}$1.105\times10^{-11}$\\$0.071$\end{tabular}&\begin{tabular}[c]{@{}l@{}}$6.896\times10^{-12}$\\$0.071$\end{tabular}
\\ \hline
\end{tabular}}
\caption{Estimated rare-event probability and level of precision for the application as described in Section \ref{SecAABOP} w.r.t.\ different combinations of $n$, $\alpha$ and $\beta$.}
\label{sec1tab3}
\end{table}

\section{An application to queueing networks}\label{SecAAQN}
In this section, an application to queueing networks is considered. More specifically, the probability of the number of customers in a subset of the system crossing a high level is estimated. Although some particular cases exist that allow for an explicit analysis (see e.g.\ Section 13 in \cite{debickimandjes2015}), it is hard to come up with exact results for the distribution of the workload process in general. Hence, implementing our algorithm in such a context is particularly interesting.

\subsection{Model description and preliminaries}
To be specific, we consider a $d$-dimensional stochastic fluid model. 
Suppose that jobs arrive to the $i$-th station in the network according to a Poisson process with unit rate, which is denoted by $\{N^{(i)}(t)\}_{t\geq0}$ and independent of $\{N^{(j)}\}$ for $j\neq i$. 
Moreover, the $k$-th arrival of the $i$-th station brings a job of size $W^{(i)}(k)$. 
We are assuming that $\{W(k)\triangleq(W_1(k),\ldots,W_d(k))^T\}_{k\geq1}$ is a sequence of i.i.d.\ positive random vectors and that $\{W(k)\}_{k\geq1}$ is independent of $\{N(t)\}_{t\geq0}$. 
Therefore, the total amount of external work that arrives to the $i$-th station up to time $t$ is given by $J^{(i)}(t)=\sum_{k=1}^{N^{(i)}(t)}W^{(i)}(k)$. 
Now, assume that the workload at the $i$-th station is processed as a fluid by the server at a rate $r_i$ and that a proportion $Q_{ij}\geq0$ of the fluid processed by the $i$-th station is routed to the $j$-th server. 
Moreover, we assume that $Q$ is a substochastic matrix with $Q_{ii}=0$ and that $Q^n\to0$ as $n\to\infty$. 
The dynamics of the model are expressed formally by the so-called Skorokhod map (for details see e.g.\ \cite{skorokhod1962a}, \cite{skorokhod1962b}, \cite{harrisonreiman1981} etc.), that is defined in terms of a pair of processes $(Z,Y)$ satisfying a stochastic differential equation that we shall describe now. 
Let $R=(I-Q)^T$, $r=(r_1,\ldots,r_d)^T$, $X(t)\triangleq J(t)-Rrt$ and $Z^{(i)}(t)$ denote the workload of the $i$-th station at time $t$, for given $Z^{(i)}(0)$, we have that
\begin{equation}\label{SecAAQNeq1}
dZ(t)=dX(t)+RdY(t),
\end{equation}
where $Y(\cdot)$ encodes the minimal amount of pushing required to keep $Z(\cdot)$ non-negative. 
In order to describe how to characterize the solution $(Z,Y)$ to \eqref{SecAAQNeq1}, we need to introduce some notations.
Let $\psi:\,\mathbb{D}^d\to\mathbb{D}_\uparrow^d$ with
\[\psi(x)\triangleq\inf\left\{w\in\mathbb{D}_\uparrow^d \,\middle|\, \,x+Rw\geq0\right\},\]
i.e.,
\[\psi^{(i)}(x)(t)\triangleq\inf\left\{w^{(i)}(t)\in\mathbb{R} \,\middle|\, w\in\mathbb{D}_\uparrow^d,\,x+Rw\geq0\right\},\quad\text{for all\ }i\text{\ and\ }t,\]
and $\phi:\,\mathbb{D}^d\to\mathbb{D}^d$ with $\phi(x)\triangleq x+R\psi(x)$. 
The following results summarize useful properties and characterizations of the Skorokhod mappings $\psi$, $\phi$, as well as the workload process $Z(t)$.

\begin{result}[Theorem 14.2.1, Theorem 14.2.5 and Theorem 14.2.7 of \cite{whitt2002}]\label{SecAAQNresult1}
For all $x\in\mathbb{D}^d$, the mappings $\psi$ and $\phi$ are well-defined. 
Moreover, $\psi$ and $\phi$ are Lipschitz continuous w.r.t.\ both the uniform metric and the  Skorokhod $J_1$ metric. 
If $Y(t)\triangleq\psi(X)(t)$ and $Z(t)\triangleq\phi(X)(t)$, then $(Y(t),Z(t))$ solve the Skorokhod problem given by \eqref{SecAAQNeq1}.
\end{result}

\begin{result}[Lemma 14.3.3, Corollary 14.3.4 and Corollary 14.3.5 of \cite{whitt2002}]\label{SecAAQNresult2}
Let $x\in\mathbb{D}^d$. For the discontinuity points of $\psi(x)$ (denoted by $\text{Disc}(\psi(x))$) and $\phi(x)$, we have that $\text{Disc}(\psi(x))\cup \text{Disc}(\phi(x))=\text{Disc}(x)$.
Moreover, if $x$ has only positive jumps, then $\psi(x)$ is continuous and $\phi(x)(t)-\phi(x)(t^-)=x(t)-x(t^-)$.
\end{result}

\begin{result}[Theorem 14.2.2 of \cite{whitt2002}]\label{SecAAQNresult3}
The regulator map $y=\psi(x)$ can be characterized as the unique fixed point of the map $\pi_{x,Q}:\,\mathbb{D}_\uparrow^d\to\mathbb{D}_\uparrow^d$, which is defined by
\[\pi_{x,Q}(w)(t)=\max\left\{ 0,\sup_{s\in[0,t]}Q^Tw(s)-x(s) \right\}.\]
\end{result}

\begin{result}[Consequence of Theorem 4.1 of \cite{ramasubramanian2000}]\label{SecAAQNresult4}
Let $\Delta\in\mathbb D^d$ be a non-decreasing function such that $\Delta(0)\geq0$. Then, for $x\in\mathbb D^d$, we have that
\[\psi(x)\geq\psi(x+\Delta),\quad \phi(x)\leq\phi(x+\Delta),\]
and
\[\phi(x)(t_2)-\phi(x)(t_1)\leq\phi(x+\Delta)(t_1)-\phi(x+\Delta)(t_2),\]
for any $0\leq t_1\leq t_2\leq 1$.
\end{result}

Finally, we assume that the right tail of $W^{(i)}(1)$ is regularly varying with index $-\beta_i$ and that the stability condition holds, i.e.\ $R^{-1}\rho<r$, where $\rho\triangleq\E J(1)$. Let $\bar{Z}_n(t)\triangleq Z(nt)/n$ and $\bar{X}_n(t)\triangleq X(nt)/n$. Let $c\in\{0,1\}^d$ be a binary vector, and let $\mathcal{J}_c$ denote the index set encoded by $c$, i.e., $j\in\mathcal{J}_c$ if $c_j=1$.
Set $\bar{Z}_n(t)\triangleq Z(nt)/n$ and $\bar{X}_n(t)\triangleq X(nt)/n$.
Define $l_c:\,\mathbb{R}^d\to\mathbb{R}$ by $l_c(x)=c^T x$ and $\pi_1:\,\mathbb{D}^d\to\mathbb{R}^d$ by $\pi_1(\xi)=\xi(1)$.
Moreover, let $F\triangleq l_c\circ \pi_1\circ \phi$. We are interested in estimating the probability of $\P\left(c^T \bar{Z}_n(1)\geq a\right)$.
By Theorem 14.2.6 (iii) of \cite{whitt2002}, we have that $\bar{Z}_n=\phi(\bar{X}_n)$, and hence it holds that
\begin{equation}\label{SecAAQNeq30}
\P\left(c^T \bar{Z}_n(1)\geq a\right)=\P\left(F(\bar{X}_n)\geq a\right)=\P(\bar{X}_n\in A),
\end{equation}
where $a>0$ and $A\triangleq\left\{\xi\in\mathbb{D}:\,F(\xi)\geq a\right\}$.

\subsection{Large deviations results}
To obtain the large deviations asymptotics for the rare-event probability as in \eqref{SecAAQNeq30}, we proceed the following.
\begin{itemize}
\item To determine the tail index of the rare-event probability, we study first the optimization problem given by \eqref{SecMReq3} and transform it into a (nonstandard) knapsack problem with nonlinear constraints (see \eqref{SecAAQNeq7} and Proposition \ref{SecAAQNprop1} below).
\item Under a certain assumption (see Assumption \ref{SecAAQNass1} below), we show that $A$, as defined in \eqref{SecAAQNeq30}, is bounded away from $\mathbb D_{<(l_1^*,\ldots,l_d^*)}$, where $l_1^*,\ldots,l_d^*$ is the optimal solution to the knapsack problem derived in the first step.
\item Finally, we derive a large deviations result for $\P(\bar{X}_n\in A)$ by applying Result \ref{SecPRresult3}.
\end{itemize}

We start with the optimization problem given by \eqref{SecMReq3}. Due to the fact that $X(t)$ is in general not a compensated compound Poisson process but one with certain drift, it is convenient to consider a slightly different problem, which is given by
\begin{equation}\label{SecAAQNeq2}
\argmin_{\substack{(l_1,\ldots,l_d)\in\mathbb{Z}_+^d\\\prod_{i=1}^d\mathbb{L}_{l_i}(\mu_i)\cap A\neq\emptyset}} \mathcal{I}(l_1,\ldots,l_d),
\end{equation}
where $\mu\triangleq\E X(1)=\rho -Rr$, $r'=r-R^{-1}\rho>0$ due to the stability condition, and $\mathbb{L}_{l_i}(\mu_i)\triangleq \left\{ \xi \,\middle|\, \exists\xi'\in\mathbb{D}_{l_i}:\,\xi(t)=\xi'(t)+\mu_i t=\xi'(t)-(Rr')_i t \right\}$. Define $E_0\triangleq\left\{ (l_1,\ldots,l_d)\in\mathbb{Z}_+^d \,\middle|\, l_i=0,\,\forall i\in\mathcal{J}_c \right\}$ and $E_1\triangleq\left\{ e_i \,\middle|\, i\in\mathcal{J}_c \right\}$, where $e_i$ denotes the unit vector with entries $0$ except for the $i$-th coordinate. By Result \ref{SecAAQNresult2}, instead of \eqref{SecAAQNeq2} we can solve two separate problems that are given by
\begin{equation}\label{SecAAQNeq8}
\argmin_{\substack{(l_1,\ldots,l_d)\in E_0\\\prod_{i=1}^d\mathbb{L}_{l_i}(\mu_i)\cap A\neq\emptyset}} \mathcal{I}(l_1,\ldots,l_d),
\end{equation}
and
\begin{equation}\label{SecAAQNeq9}
\argmin_{\substack{(l_1,\ldots,l_d)\in E_1\\\prod_{i=1}^d\mathbb{L}_{l_i}(\mu_i)\cap A\neq\emptyset}} \mathcal{I}(l_1,\ldots,l_d).
\end{equation}
Note that the optimization problem given by \eqref{SecAAQNeq9} can be solved easily by considering $\min_{i\in\mathcal{J}_c}\,\beta_i-1$, therefore we focus on the optimization problem given by \eqref{SecAAQNeq8}. Let $\mathcal{J}$ be a subset of $(\mathcal{J}_c)^c$. Moreover, let $\theta\in\mathbb{D}_1$ and let $\xi\in\mathbb{D}^d$ be such that
\begin{equation}\label{SecAAQNeq4}
\xi^{(i)}(t)=
\begin{cases}
-(Rr')_i t,\,t\in[0,1], &\text{for\ }i\notin\mathcal{J},\\[5pt]
\theta^{(i)}-(Rr')_i t,\,t\in[0,1],\quad &\text{for\ }i\in\mathcal{J}.
\end{cases}
\end{equation}
A necessary and sufficient condition for the existence of $\xi\in A$ is given in the following Proposition. 

\begin{prop}\label{SecAAQNprop1}
Let $\mathcal{J}\subseteq(\mathcal{J}_c)^c$. Moreover, let $\{r^*_i\}_{i\notin\mathcal{J}}$ be such that
\begin{equation}\label{SecAAQNeq5}
r_i^*=\max\left\{ r_i'-\sum_{j\neq i}Q_{ji}r_j'+\sum_{\substack{j\neq i\\j\notin\mathcal{J}}}Q_{ji}r_j^*,0 \right\},\text{\quad for\ }i\notin\mathcal{J}.
\end{equation}
Define
\begin{equation}\label{SecAAQNeq10}
\partial_z(\mathcal{J})\triangleq\sum_{i\in\mathcal{J}_c}\left( r_i^*-r_i'+\sum_{j\neq i}Q_{ji}r_j'-\sum_{\substack{j\neq i\\j\notin\mathcal{J}}}Q_{ji}r_j^* \right).
\end{equation}
If $\partial_z(\mathcal{J})\neq a$, then there exists $\xi$ satisfying \eqref{SecAAQNeq4} and $c^T\phi(\xi)(1)\geq a$, if and only if $\partial_z(\mathcal{J})> a$. Additionally, if $\mathcal{J}_1\subseteq\mathcal{J}_2\subseteq(\mathcal{J}_c)^c$, then we have that $\partial_z(\mathcal{J}_1)\leq\partial_z(\mathcal{J}_2)$.
\end{prop}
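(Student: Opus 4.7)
The plan is to interpret $\partial_z(\mathcal{J})$ as the supremum of $c^T\phi(\xi)(1)$ over all $\xi$ of the form \eqref{SecAAQNeq4}, achieved in the fluid limit where the jumps in $\mathcal{J}$ are very large and occur at time $0$. The quantities $r_j^*$ are to be read as the equilibrium idle rates $\dot Y^{(j)}$ at non-$\mathcal{J}$ stations when the $\mathcal{J}$-stations are flooded throughout $[0,1]$; those $j\notin\mathcal{J}$ with $r_j^*=0$ are the stations that become indirectly flooded. The first step is to verify that \eqref{SecAAQNeq5} is well-posed: the right-hand side defines a monotone, contractive map on the positive cone (since $Q$ is substochastic with $Q^n\to 0$), so monotone iteration from $r^*=0$ yields a unique fixed point. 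A direct algebraic rearrangement using \eqref{SecAAQNeq5} then identifies $\partial_z(\mathcal{J})$ with $\sum_{i\in\mathcal{J}_c,\,r_i^*=0}\dot Z^{(i)}$, the total fluid-limit rate of workload accumulation at the indirectly-flooded target stations.

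For the ``if'' direction, I would take $\theta^{(i)}(t)=M\1_{[0,1]}(t)$ for $i\in\mathcal{J}$, with $M$ large. Using the fixed-point characterization in Result~\ref{SecAAQNresult3}, I would verify that $\psi(\xi)^{(j)}(t)\to r_j^*\,t$ uniformly on $[0,1]$ as $M\to\infty$ (setting $r_j^*=0$ for $j\in\mathcal{J}$). Then $\phi(\xi)(1)=\xi(1)+R\psi(\xi)(1)$ gives $c^T\phi(\xi)(1)\to\partial_z(\mathcal{J})>a$, so $c^T\phi(\xi)(1)\ge a$ for all sufficiently large $M$.

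For the ``only if'' direction, the target is $c^T\phi(\xi)(1)\le\partial_z(\mathcal{J})$ for every admissible $\xi$. I would compare $\xi$ with $\xi^{M}$, obtained by moving each $\mathcal{J}$-jump to time~$0$ and inflating it to size $M$. Enlarging jumps at their original times is immediate from Result~\ref{SecAAQNresult4}; shifting jumps to time~$0$ requires a piecewise coupling on $[0,t_i]$ and $[t_i,1]$ using the same monotonicity together with the observation that a $\mathcal{J}$-station flooded earlier routes strictly more fluid downstream. Sending $M\to\infty$ and invoking the sufficiency computation yields $c^T\phi(\xi)(1)\le\partial_z(\mathcal{J})$; combined with $\partial_z(\mathcal{J})\ne a$, this gives $\partial_z(\mathcal{J})>a$. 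The monotonicity $\mathcal{J}_1\subseteq\mathcal{J}_2\Rightarrow\partial_z(\mathcal{J}_1)\le\partial_z(\mathcal{J}_2)$ is then immediate from the variational characterization, since the supremum is taken over a strictly larger family of $\xi$ when $\mathcal{J}$ grows; an equivalent proof compares the fixed points of \eqref{SecAAQNeq5} for the two choices using the monotonicity of the defining operator.

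The main technical obstacle is the time-shift step of the necessity direction: Result~\ref{SecAAQNresult4} handles only non-decreasing additive perturbations, whereas moving a jump from $t_i>0$ to $0$ corresponds to an additive perturbation that first rises and later drops. I expect to handle this via a delicate station-by-station coupling — splitting $[0,1]$ at the jump times and showing that any ``loss'' at early times is more than recovered by the extra routing produced on later intervals — or, alternatively, via a direct variational argument using the fixed-point form of $\psi$ in Result~\ref{SecAAQNresult3} to optimize over admissible idle-rate profiles.
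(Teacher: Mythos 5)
Your \emph{if} direction is essentially the paper's: place arbitrarily large jumps at the start of $[0,1]$ at the $\mathcal{J}$-stations, use the fixed-point characterization (Result~\ref{SecAAQNresult3}) to identify the limiting regulator rates as $r^*$ satisfying \eqref{SecAAQNeq5}, and compute $c^T\phi(\xi)(1)\to\partial_z(\mathcal{J})$. One small technicality: $\theta^{(i)}(t)=M\1_{[0,1]}(t)$ does not vanish at $t=0$ and hence is not in $\mathbb{D}_1$; the jump must sit at some $\delta>0$, giving $c^T\phi(\xi)(1)=\partial_z(\mathcal{J})(1-\delta)$ and letting $\delta\downarrow 0$. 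The monotonicity claim is also fine.

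The \emph{only-if} direction is where there is a genuine gap, and you already flag it yourself. Your plan is to replace $\xi$ by $\xi^M$ obtained by moving every $\mathcal{J}$-jump to time~$0$ and inflating it; this requires $c^T\phi(\xi)(1)\le c^T\phi(\xi^M)(1)$, which does not follow from Result~\ref{SecAAQNresult4}: the perturbation that shifts a jump from $t_j$ to $0$ is a rectangle $u_j\1_{[0,t_j)}$, which is not non-decreasing. Worse, the monotonicity you're relying on is subtle — as a single-station sanity check, for a reflected drifted step with one jump and no routing, moving the jump earlier \emph{lowers} $\phi(\xi)(1)$; the inequality you need only holds because the target stations lie downstream and receive extra routed fluid, and that is precisely the interaction the ``delicate coupling'' must quantify, which your sketch does not do. The paper sidesteps the whole problem: it never moves jumps. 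It fixes the original discontinuity points $d_1<\cdots<d_m$, uses Result~\ref{SecAAQNresult4} only to enlarge jump sizes (a bona fide non-decreasing perturbation), then solves the regulator fixed-point equation piecewise on each $[d_i,d_{i+1})$ with the currently-flooded index set $\mathcal{J}_i\subseteq\mathcal{J}$, and shows $\frac{d}{dt}c^Tz_\xi(t)\le\partial_z(\mathcal{J}_i)\le\partial_z(\mathcal{J})$ there by the monotonicity of \eqref{SecAAQNeq5} in the index set. Integrating over $[0,1]$ gives $a\le c^Tz_\xi(1)<\partial_z(\mathcal{J})$ directly. This is the ``direct variational / fixed-point'' route you list as an alternative but do not carry out; to complete your proof you should abandon the time-shift coupling and execute that route, paying attention to the fact that stations with $r_i^{*,k}=0$ must be absorbed into the flooded set when passing from one interval to the next (the paper's $\mathcal{J}_1'$ step).
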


\begin{proof}
We give here a sketch of the proof, where a detailed one can be found in Section \ref{SecP}. Note that $\partial_z(\mathcal J)$ given by \eqref{SecAAQNeq10} is the increasing rate of the subset $\mathcal I_c$ of the workload process, whose associated input process does not have any jumps but starts with sufficiently large initial value. Based on this observation, a $\xi$ can be constructed for the ``if''-part of the first statement. For the ``only if''-part, suppose that there exists a $\xi$ satisfying $c^T\phi(\xi)(1)\geq a$. By Result \ref{SecAAQNresult4}, enlarging the size of jumps in $\xi$ will preserve the fact that $c^T\phi(\xi)(1)\geq a$. Hence, we can construct a new $\xi$, such that
\begin{itemize}
\item the associated workload process $\phi(\xi)$ is piecewise linear between two neighboring discontinuity points; and
\item the increasing rate of $c^T\phi(\xi)$ is always smaller or equal than $\partial_z(\mathcal J)$ given by \eqref{SecAAQNeq10}.
\end{itemize}
\end{proof}

\begin{rmk}\label{SecAAQNrmk1}
Note that \eqref{SecAAQNeq5} can be written in a matrix notation that is given by
\[r^*=\max\left\{ \left((I-Q^T)r'\right)_{\notin\mathcal{J}}+(Q_{\notin\mathcal{J}})^Tr^*,0 \right\}=\max\left\{ \left( Rr-\rho \right)_{\notin\mathcal{J}}+(Q_{\notin\mathcal{J}})^Tr^*,0 \right\},\]
where $\left( Rr-\rho \right)_{\notin\mathcal{J}}$ and $Q_{\notin\mathcal{J}}$ denote the vector and matrix respectively with its $i$-th row and column being removed for all $i\in\mathcal{J}$. Using the Banach fixed-point theorem, we obtain that $r^*=\lim_{n\to\infty}\underline{\pi}^n(0)$, where $\underline{\pi}^n\triangleq\underline{\pi}\,\circ\,\underline{\pi}^{n-1}$ and $\underline{\pi}(x)\triangleq\max\left\{ \left( Rr-\rho \right)_{\notin\mathcal{J}}+(Q_{\notin\mathcal{J}})^Tx,0 \right\}$.
\end{rmk}

Define $E_0'\triangleq E_0\cap \left\{ (l_1,\ldots,l_d)\in\mathbb{Z}_+^d \,\middle|\, l_i\in\{0,1\},\,\forall i\notin\mathcal{J}_c \right\}$ and
\[E_{\mathcal J}\triangleq\left\{ (l_1,\ldots,l_d)\in E_0' \,\middle|\, \partial_z\left( \mathcal J_{(l_1,\ldots,l_d)} \right) > a \right\},\]
where $\partial_z\left( \mathcal J_{(l_1,\ldots,l_d)} \right)$ is as defined in \eqref{SecAAQNeq10} with $\mathcal J_{(l_1,\ldots,l_d)}$ denoting the index set encoded by $(l_1,\ldots,l_d)\in E_0'$. By Proposition \ref{SecAAQNprop1}, we conclude that the optimization problem formulated in \eqref{SecAAQNeq8} is equivalent to
\begin{equation}\label{SecAAQNeq7}
\argmin_{(l_1,\ldots,l_d)\in E_{\mathcal J}}\,\mathcal{I}(l_1,\ldots,l_d).
\end{equation}
Thanks to the last statement of Proposition \ref{SecAAQNprop1}, it is unnecessary to check every $(l_1,\ldots,l_d)\in E_{\mathcal J}$ for solving \eqref{SecAAQNeq7}. Although, the optimization problem formulated in \eqref{SecAAQNeq7} is a nonstandard knapsack problem with nonlinear constraints. In the following example, we consider a specific fluid network and illustrate how to solve the optimization problem given by \eqref{SecAAQNeq7} using Proposition \ref{SecAAQNprop1}.

\begin{example}\label{SecAAQNex1}
Consider the fluid network given by $\rho=(0.8\ 0.8\ 1)^T$, $r=(1\ 1\ 2.5)^T$ and
\[Q=\begin{bmatrix}
0 & 0.1 & 0.8\\
0.1 & 0 & 0.8\\
0 & 0 & 0
\end{bmatrix}.\]
We are interested in the probability of the rare event that the third station crosses the level $na$ at time $n$ for large $n$, i.e.\ $\mathcal{J}_c=\{3\}$. It is easy to check that the stability condition holds. By an easy computation, we obtain that $\partial_z(\{1,2\})=0.1$ and $\partial_z(\{1\})=\partial_z(\{2\})=0.02$. For $a=0.05$, the optimal solution to \eqref{SecAAQNeq7} is given by $(1,1,0)$.
\end{example}

\begin{figure}[]
\centering
\includegraphics[scale=0.55]{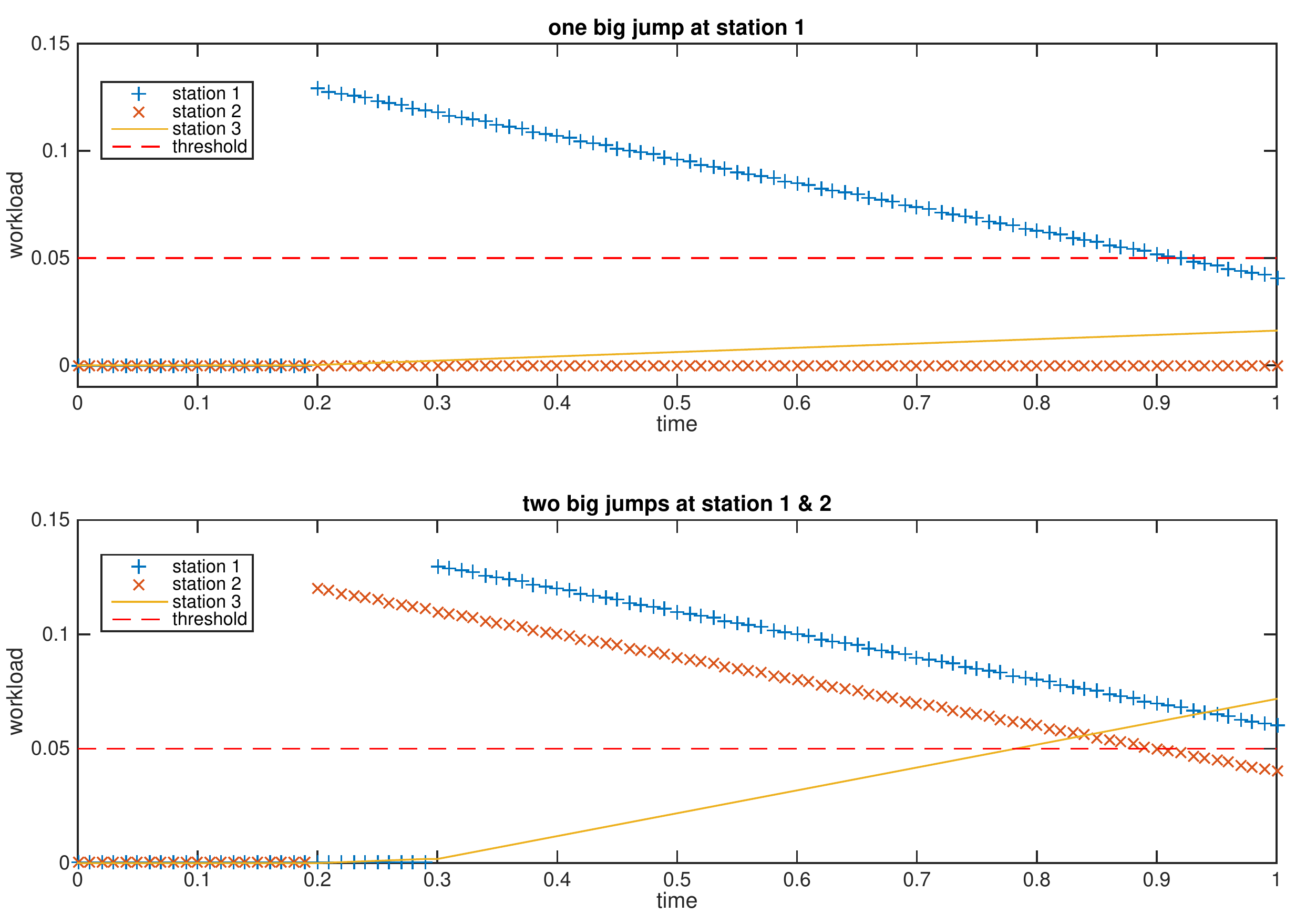}
\caption{An illustration of two different sample paths of workload processes (under the setting of Example \ref{SecAAQNex1}), whose associated input processes have the form as in \eqref{SecAAQNeq4}.}
\label{SecAAQNfig1}
\end{figure}

Suppose that we have solved the optimization problem given by \eqref{SecAAQNeq7}. To obtain the large deviations results, the following technical assumption needs to be made.

\begin{assumption}\label{SecAAQNass1}
Assume that the optimization problem formulated in \eqref{SecAAQNeq7} satisfies the conditions as follows.
\begin{description}
\item[a)] The optimization problem given by \eqref{SecAAQNeq7} has a unique solution.
\item[b)] For every $\mathcal{J}\subseteq(\mathcal{J}_c)^c$, it holds that $\partial_z(\mathcal{J})\neq a$.
\item[c)] Let $(l_1^*,\ldots,l_d^*)$ denote the optimal solution to \eqref{SecAAQNeq7}. We assume that
\[\mathcal{I}(l_1^*,\ldots,l_d^*)<\min_{i\in\mathcal{J}_c}\,\beta_i-1.\]
\end{description}
\end{assumption}

By Result \ref{SecPRresult3}, Assumption \ref{SecAAQNass1} c) implies that the rare event is caused by multiple large jumps. Throughout the rest of this section, we assume that Assumption \ref{SecAAQNass1} holds. We end this subsection with a large deviations result for $\P(\bar{X}_n\in A)=\P\left(c^T \bar{Z}_n(1)\geq a\right)$, which is formulated in the following proposition.
\begin{prop}\label{SecAAQNprop2}
Suppose that Assumption \ref{SecAAQNass1} holds. Let $F$ be as defined in \eqref{SecAAQNeq30}. Then $A=F^{-1}[a,\infty)$ is bounded away from
\[\bigcup_{(l_1,\ldots,l_d)\in I_{<(l_1^*,\ldots,l_d^*)}}\prod_{i=1}^d\mathbb{L}_{l_i}(\mu_i),\]
where $(l_1^*,\ldots,l_d^*)$ denotes the unique optimal solution of \eqref{SecAAQNeq7}. Moreover, we have that
\begin{align*}
C_{l_1^*} \times\cdots\times C_{l_d^*}\left( (F^{-1}[a,\infty))^\circ \right)
&\leq
\liminf_{n\rightarrow\infty} \frac{\P\left( \bar X_n \in A \right) }{\prod_{i=1}^d\big(n\nu_i[n,\infty)\big)^{l_i^*}}\\
&\leq\limsup_{n\rightarrow\infty} \frac{\P\left( \bar X_n \in A \right)}{\prod_{i=1}^d\big(n\nu_i[n,\infty)\big)^{l_i^*}}
\leq
C_{l_1^*} \times\cdots\times C_{l_d^*}\left( F^{-1}[a,\infty) \right).
\end{align*}
\end{prop}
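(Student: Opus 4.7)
The plan is to apply Result~\ref{SecPRresult3} to the centered scaled process $\tilde X_n(t) \triangleq \bar X_n(t) - \mu t$, where $\mu = \rho - Rr$. Because the drift-shift $\xi \mapsto \xi + \mu\,\mathrm{id}$ is a $J_1$-homeomorphism of $\mathbb D^d$ that carries $\mathbb D_{l_i}$ onto $\mathbb L_{l_i}(\mu_i)$, showing $A$ is bounded away from $\bigcup_{(l_1,\ldots,l_d)\in I_{<(l_1^*,\ldots,l_d^*)}}\prod_{i=1}^d\mathbb L_{l_i}(\mu_i)$ is equivalent to showing $A - \mu\,\mathrm{id}$ is bounded away from $\mathbb D_{<(l_1^*,\ldots,l_d^*)}$. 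Granting this, Result~\ref{SecPRresult3} yields the stated asymptotics at once; continuity of $F$ gives $A^- = A$.

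The heart of the proof is therefore the bounded-away property. I first observe that every $(l_1,\ldots,l_d)\in I_{<(l_1^*,\ldots,l_d^*)}$ must satisfy $l_i = 0$ for $i\in\mathcal J_c$, for otherwise $\mathcal I(l_1,\ldots,l_d)\geq \beta_i - 1 > \mathcal I(l_1^*,\ldots,l_d^*)$ by Assumption~\ref{SecAAQNass1}(c), contradicting membership. For each remaining tuple set $\mathcal J \triangleq \{i : l_i\geq 1\}\subseteq(\mathcal J_c)^c$. I claim $\partial_z(\mathcal J) < a$: consider the $0/1$-reduction $(l_1',\ldots,l_d')$ with $l_i' = \1\{l_i\geq 1\}$; it lies in $E_0'$ and satisfies $\mathcal I(l_1',\ldots,l_d')\leq\mathcal I(l_1,\ldots,l_d)\leq\mathcal I(l_1^*,\ldots,l_d^*)$. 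If $\partial_z(\mathcal J)>a$, then $(l_1',\ldots,l_d')\in E_{\mathcal J}$, and uniqueness of the minimizer in \eqref{SecAAQNeq7} (Assumption~\ref{SecAAQNass1}(a)) forces $(l_1',\ldots,l_d') = (l_1^*,\ldots,l_d^*)$; coupled with $l_i\geq l_i'$ and equality of the $\mathcal I$ values, this forces $(l_1,\ldots,l_d) = (l_1^*,\ldots,l_d^*)$, contradicting membership in $I_{<(l_1^*,\ldots,l_d^*)}$. Assumption~\ref{SecAAQNass1}(b) rules out $\partial_z(\mathcal J) = a$.

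To convert this strict inequality into a uniform distance bound, I extend the sketch behind Proposition~\ref{SecAAQNprop1} past the form restriction \eqref{SecAAQNeq4}: by Result~\ref{SecAAQNresult4}, amalgamating or enlarging jumps within a given coordinate only increases $c^T\phi(\xi)(1)$, so $F(\xi)\leq \partial_z(\mathcal J)$ holds for every $\xi\in\prod_i\mathbb L_{l_i}(\mu_i)$ with jump-index set $\mathcal J$. Now assume $d(\xi,\eta)<\epsilon$ for some $\eta\in A$ and let $\lambda$ realize the Skorokhod distance. Since the fixed-point characterization in Result~\ref{SecAAQNresult3} shows that $\phi$ commutes with time changes and $\lambda(1) = 1$, we have $\phi(\xi\circ\lambda)(1) = \phi(\xi)(1)$; the uniform-metric Lipschitz continuity of $\phi$ from Result~\ref{SecAAQNresult1} therefore yields $|F(\xi) - F(\eta)|\leq K\epsilon$ for some constant $K$, so $a - K\epsilon \leq F(\xi)\leq \partial_z(\mathcal J)$, i.e.\ $\epsilon\geq (a - \partial_z(\mathcal J))/K > 0$. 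Finiteness of $I_{<(l_1^*,\ldots,l_d^*)}$ (since each $\beta_i > 1$) then produces a uniform positive lower bound over the finitely many tuples. The main obstacle is this last step---extending Proposition~\ref{SecAAQNprop1} to multi-jump paths and reducing $J_1$-distance estimates to uniform-metric estimates despite the discontinuity of the endpoint evaluation $\pi_1$ in $J_1$---which the time-change invariance of the Skorokhod map is precisely what makes possible.
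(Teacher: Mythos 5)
Your argument follows the paper's proof in its essentials: reduce the boundedness claim to the pointwise bound $F(\prod_i\mathbb L_{l_i}(\mu_i))\subseteq(-\infty,\partial_z(\mathcal J)]$ for each $(l_1,\ldots,l_d)\in I_{<(l_1^*,\ldots,l_d^*)}$ (via the sketch of Proposition~\ref{SecAAQNprop1}), combine this with the Lipschitz continuity of $F$ to get a uniform metric separation from $F^{-1}[a,\infty)$, and then feed the bounded-away property into Result~\ref{SecPRresult3}. You also fill in details that the paper elides: the explicit observation that $l_i=0$ must hold for $i\in\mathcal J_c$, the careful derivation of $\partial_z(\mathcal J)<a$ from uniqueness of the minimizer in \eqref{SecAAQNeq7} together with Assumption~\ref{SecAAQNass1}(b), and the drift-shift reducing separation from $\prod_i\mathbb L_{l_i}(\mu_i)$ to separation from $\mathbb D_{<(l_1^*,\ldots,l_d^*)}$ so that Result~\ref{SecPRresult3} applies to the centered process. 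These are worthwhile elaborations of the same route.

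One correction is in order: the endpoint evaluation $\pi_1(\xi)=\xi(1)$ is \emph{continuous} in the $J_1$ topology, not discontinuous as you claim. Every admissible time change $\lambda$ fixes the endpoint, $\lambda(1)=1$, so $|\xi(1)-\eta(1)|=|\xi(\lambda(1))-\eta(1)|\leq\|\xi\circ\lambda-\eta\|_\infty$, and therefore $\pi_1$ is $J_1$-Lipschitz with constant $1$. (It is evaluation at an \emph{interior} time that can fail to be continuous.) The paper exploits this directly, citing Theorem 12.5 of Billingsley, to conclude that $F=l_c\circ\pi_1\circ\phi$ is $J_1$-Lipschitz as a composition of Lipschitz maps. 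Your workaround via the time-change equivariance $\phi(\xi\circ\lambda)=\phi(\xi)\circ\lambda$ is in fact correct — the Skorokhod map does commute with strictly increasing bijective time changes, since the pathwise constraint defining $\psi$ is preserved under monotone reparameterization — but it is an unnecessary detour, and the motivating claim about $\pi_1$ being a "main obstacle" is mistaken.
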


\begin{proof}
See Section \ref{SecP}.
\end{proof}

\subsection{Simulation}\label{SecAAQNS}
Again, we are in the setting of Theorem \ref{SecMRthm1}.
To be able to discuss the choice of $J_{(l_1^*,\ldots,l_d^*)}$ and the parameter $\gamma$ in a more precise context, let us consider the stochastic fluid network introduced in Example \ref{SecAAQNex1}.

\addtocounter{example}{-1}
\begin{example}[continued]
Recall that, for $a=0.05$, the optimal solution of \eqref{SecAAQNeq2} is given by $\beta_1+\beta_2-2$, if we assume that $\beta_1+\beta_2-2<\beta_3-1$. Moreover, it can be easily shown that $A$ is bounded away from both $\mathbb{D}_{< i}\times\mathbb{D}_0\times\mathbb{D}_0$ and $\mathbb{D}_0\times\mathbb{D}_{<j}\times\mathbb{D}_0$. Combining this with $\mathcal{I}(1,1,1)>2\mathcal{I}(1,1,0)$, as well as Corollary \ref{SecMRcorol1}, it is sufficient to take $\tilde J_{(l_1^*,\ldots,l_d^*)}=\{(1,1,0),(0,0,1)\}$. This implies that
\begin{align*}
B_n^\gamma &= \left\{ \#\big\{ k \,\big|\, W^{(i)}(k)>n\gamma_i,\,k\leq N^{(i)}(n)\big\}\geq1,\forall i\in\{1,2\} \right\}\cup \left\{ \#\big\{ k \,\big|\, W^{(3)}(k)>n\gamma_3,\,k\leq N^{(3)}(n) \big\}\geq1 \right\},
\end{align*}
and hence
\[
(B_n^\gamma)^c = \left\{ \exists i\in\{1,2\}:\, W^{(i)}(k)\leq n\gamma_i,\,\forall 1\leq k\leq N^{(i)}(n) \right\} \cap \left\{ W^{(3)}(k)\leq n\gamma_3,\,\forall 1\leq k\leq N^{(3)}(n) \right\}.
\]

We choose $\gamma$ such that $\P(A_n\cap(B_n^\gamma)^c)=\omicron(\P(A_n)^2)$. To begin with, we assume w.l.o.g.\ that $\beta_3-1\leq2(\beta_1+\beta_2-2)$, otherwise we can simply set $J_{(l_1^*,\ldots,l_d^*)}=\{(1,1,0)\}$, since $\mathcal{I}(0,0,1)>2\mathcal{I}(1,1,0)$. Now the parameter $\gamma_3$ can be chosen such that
\[\left\lceil \frac{1/20}{\gamma_3} \right\rceil (\beta_3-1) > 2(\beta_1+\beta_2-2).\]
For the choice of $\gamma_1$, we observe that the job arriving at the second station can have arbitrarily large size. Hence, it is sufficient to consider the inequality $\partial_z(\{1,2\})t'+\partial_z(\{2\})(1-t')>a$, where $\partial_z(\{1,2\})=0.1$ and $\partial_z(\{2\})=0.02$. Solving the inequality we obtain that $t'<3/8$. This simply means that the workload process of the third station cannot exceed the level $a$ at time $1$ if we keep both of the first and the second stations overloaded less than $3/8$ of the time. Since the workload process of the first station decays at rate $1/10$, one can choose $\gamma_1$ such that
\[\left\lceil \frac{3/80}{\gamma_1} \right\rceil (\beta_1-1)+(\beta_2-1) > 2(\beta_1+\beta_2-2).\]
Analogously, it is sufficient to set $\gamma_2$ such that
\[(\beta_1-1)+\left\lceil \frac{3/80}{\gamma_2} \right\rceil (\beta_2-1) > 2(\beta_1+\beta_2-2).\]

We give a closed-form expression for $\P(B_n^\gamma)$. By assumption $\{W^{(i)}(k)\}_{1\leq i\leq d}$ are mutually independent, therefore we have that
\begin{align*}
\P((B_n^\gamma)^c)&=\P\left( \exists i\in\{1,2\}:\, W^{(i)}(k)\leq n\gamma_i,\,\forall 1\leq k\leq N^{(i)}(n) \right)\P\left(W^{(3)}(k)\leq n\gamma_3,\,\forall 1\leq k\leq N^{(3)}(n)\right)\\
&=\left[1-\prod_{i=1}^2 \left(1-\P\left( W^{(i)}(k)\leq n\gamma_i,\,\forall 1\leq k\leq N^{(i)}(n) \right)\right)\right]\P\left(W^{(3)}(k)\leq n\gamma_3,\,\forall 1\leq k\leq N^{(3)}(n)\right).
\end{align*}
Conditional on $N^{(i)}(n)$, we obtain that
\begin{align*}
\P\left( W^{(i)}(k)\leq n\gamma_i,\,\forall 1\leq k\leq N^{(i)}(n) \right)&=e^{-n}\sum_{m=0}^\infty \frac{n^m}{m!}\, \P\left( W^{(i)}(1)\leq n\gamma_i \right)^m\\
&=\exp{\left\{-n\left(1-\P\left( W^{(i)}(1)\leq n\gamma_i \right)\right)\right\}}.
\end{align*}
Summarizing the findings from above, we are able to propose a strongly efficient estimator for $\P(A_n)$ that is given by
\[
Z_n = \frac{\1_{A_n}}{w+\frac{1-w}{\P(B_n^\gamma)}\1_{B_n^\gamma}}.
\]
Moreover, Algorithm \ref{SecMRalg2} can be used to sample from $\Q^\gamma$. To see this, we decompose $B_n^\gamma$ into two disjoint sets $B_n^\gamma(1)$ and $B_n^\gamma(2)$ that are given by
\[
B_n^\gamma(1) \triangleq \left\{ \#\big\{ k \,\big|\, W^{(3)}(k)>n\gamma_3,\,k\leq N^{(3)}(n) \big\}\geq1 \right\},
\]
and
\begin{align*}
B_n^\gamma(2) 
& \triangleq \left\{ \#\big\{W^{(i)}(k)>n\gamma_i,\,k\leq N^{(i)}(n)\big\}\geq1,\forall i\in\{1,2\} \right\} \cap \{ W^{(3)}(k)\leq n\gamma_3,\,\forall 1\leq k\leq N^{(3)}(n) \},
\end{align*}
respectively.
Using Algorithm \ref{SecMRalg2}, the sample path of $\bar X_n^{(1)},\bar X_n^{(2)},\bar X_n^{(3)}$ can be simulated independently on both $B_n^\gamma(1)$ and $B_n^\gamma(2)$.
We present the numerical results based on $20000$ samples in Table \ref{SecAAQNtab1}.
We choose $W^{(i)}(1)$ such that $\P(W^{(i)}(1)>t) = (t_{r,i}/t)^{\beta_i}$ and $t_{r,i}=\rho_i(\beta_i-1)/\beta_i$, for $i\in\{1,2,3\}$.
As one can see, the numerical results suggest again what our theory predicts.
\end{example}

\begin{table}
\resizebox{\textwidth}{!}{
\centering
\begin{tabular}{lllll}
\hline
\begin{tabular}[c]{@{}l@{}}Est\\ PR\end{tabular} & $n=1200$ & $n=1600$ & $n=2000$ & $n=2400$
\\ \hline
$\beta_1=1.5,\beta_2=1.5,\beta_3=2.2$&\begin{tabular}[c]{@{}l@{}}$7.719\times10^{-2}$\\$0.045$\end{tabular}&\begin{tabular}[c]{@{}l@{}}$6.228\times10^{-2}$\\$0.058$\end{tabular}&\begin{tabular}[c]{@{}l@{}}$4.541\times10^{-2}$\\$0.057$\end{tabular}&\begin{tabular}[c]{@{}l@{}}$3.973\times10^{-2}$\\$0.057$\end{tabular}
\\ \hline \hline
\begin{tabular}[c]{@{}l@{}}Est\\ PR\end{tabular} & $n=800$ & $n=1200$ & $n=1600$ & $n=2000$ 
\\ \hline
$\beta_1=2.5,\beta_2=2.3,\beta_3=4$&\begin{tabular}[c]{@{}l@{}}$2.894\times10^{-2}$\\$0.325$\end{tabular}&\begin{tabular}[c]{@{}l@{}}$1.686\times10^{-2}$\\$0.404$\end{tabular}&\begin{tabular}[c]{@{}l@{}}$6.153\times10^{-3}$\\$0.445$\end{tabular}&\begin{tabular}[c]{@{}l@{}}$2.023\times10^{-3}$\\$0.448$\end{tabular}
\\ \hline \hline
\begin{tabular}[c]{@{}l@{}}Est\\ PR\end{tabular} & $n=600$ & $n=1000$ & $n=1400$ & $n=1800$ 
\\ \hline
$\beta_1=2.2,\beta_2=2.9,\beta_3=4.5$&\begin{tabular}[c]{@{}l@{}}$5.139\times10^{-2}$\\$0.249$\end{tabular}&\begin{tabular}[c]{@{}l@{}}$1.858\times10^{-2}$\\$0.347$\end{tabular}&\begin{tabular}[c]{@{}l@{}}$9.987\times10^{-3}$\\$0.351$\end{tabular}&\begin{tabular}[c]{@{}l@{}}$1.028\times10^{-3}$\\$0.377$\end{tabular}
\\ \hline
\end{tabular}}
\caption{Estimated rare-event probability and level of precision for the application as described in Section \ref{SecAAQNS} w.r.t.\ different combinations of $n$ and $\beta_1,\beta_2,\beta_3$.}
\label{SecAAQNtab1}
\end{table}

\section{Proofs}\label{SecP}
In this section we provide proofs of the results presented in this paper.

\begin{proof}[Proof of Proposition \ref{SecMRprop20}]
Recall that the expected running time of the rejection sampling (see Algorithm \ref{SecMRalg20} above), which is used to generate the jumps of $\bar X_n$, is bounded from above by
\[M_l=\frac{\binom{l}{l^*}\P(W(1)> n\gamma)^{l^*}}{\P(B_n^\gamma|N(n)=l)}.\]
Hence, for the expected running time of Algorithm \ref{SecMRalg20}, denoted by $T_{\text{alg\ref{SecMRalg20}}}(n)$, we have that
\begin{align*}
T_{\text{alg\ref{SecMRalg20}}}(n)&=\sum_{l\geq l^*} h_l M_l=\P(B_n^\gamma)^{-1} \sum_{l\geq l^*}\P(B_n^\gamma\,|\,N(n)=l)\P(N(n)=l)M_l\\
&=\P(B_n^\gamma)^{-1} \sum_{l\geq l^*}\P(N(n)=l)\binom{l}{l^*}\P(W(1)> n\gamma)^{l^*}\\
&=C(l^*)\,\frac{n^{l^*}(\lambda\P(W(1)> n\gamma))^{l^*}}{\P(B_n^\gamma)} e^{-\lambda n}\sum_{l\geq l^*}\frac{(\lambda n)^{l-l^*}}{(l-l^*)!}\\
&=C(l^*)\,\frac{n^{l^*}(\lambda\P(W(1)> n\gamma))^{l^*}}{\P(B_n^\gamma)}.
\end{align*}
Using Result \ref{SecPRresult2}, we obtain that $\limsup_{n\to\infty}T_{\text{alg\ref{SecMRalg20}}}(n)<\infty$.
\end{proof}

\begin{proof}[Proof of Proposition \ref{SecMRprop1}]
Let $B^\gamma$ be as defined in \eqref{SecMReq2}. For $I\subseteq J_{(l_1^*,\ldots,l_d^*)}$, define
\[
B_I^{\gamma;l}\triangleq\bigcap_{(l_1,\ldots,l_d)\in I} B^{ \gamma;l }.
\]
By the inclusion-exclusion principle, we have that
\begin{equation}\label{SecPeq40}
\P(\bar{X}_n\in B^\gamma)=\sum_{k=1}^{\left| J_{(l_1^*,\ldots,l_d^*)} \right|} \left( (-1)^{k-1}\sum_{\substack{| I |=k\\I\subseteq J_{(l_1^*,\ldots,l_d^*)}}} \P\left( \bar{X}_n\in B_I^{ \gamma;l } \right) \right).
\end{equation}
Moreover, for any finite collection $I$ of elements in $\mathbb{Z}_+^d$ with $I\subseteq J_{(l_1^*,\ldots,l_d^*)}$, we have that
\begin{align}
\nonumber B_I^{\gamma;l}&=\bigcap_{i=1}^d\, \bigcap_{(l_1,\ldots,l_d)\in I}\, \left\{ \left( \xi^{(1)},\ldots,\xi^{(d)} \right) \,\middle|\, \#\big\{ t \,\big|\, \xi^{(i)}(t)-\xi^{(i)}(t^-)>\gamma_i \big\}\geq l_i \right\}\\
&=\bigcap_{i=1}^d\, \left\{ \left( \xi^{(1)},\ldots,\xi^{(d)} \right) \,\middle|\, \#\big\{ t \,\big|\, \xi^{(i)}(t)-\xi^{(i)}(t^-)>\gamma_i \big\}\geq \hat l_{i;I} \right\},\label{SecPeq4}
\end{align}
where $\hat l_{i;I}\triangleq\max_{(l_1,\ldots,l_d)\in I}l_i$. Since $\bar X_n^{(1)},\ldots,\bar X_n^{(d)}$ are independent processes, we obtain that
\begin{align*}
\P(B_I^{\gamma;l})=\prod_{i=1}^d \left( 1-\exp\bigg\{- \lambda_i n\P( W^{(i)}(1)>n\gamma_i)\bigg\} \sum_{j=0}^{\hat l_{i;I}-1} \frac{(\lambda_i n)^j}{j!}\P(W^{(i)}(1)> n\gamma_i)^j \right).
\end{align*}
\end{proof}

\begin{proof}[Proof of Lemma \ref{SecMRlem2}]
Recall that
\[
B^{\gamma;l}(i,j)\triangleq \left\{ \xi\in\mathbb D^d \,\middle|\, \#\big\{ t \,\big|\, \xi^{(i)}(t)-\xi^{(i)}(t^-)>\gamma_i \big\}\geq (l(j))_i \right\}.
\]
Hence, we have that
\begin{align}
B^\gamma = \bigcup_{j=1}^{| J_{(l_1^*,\ldots,l_d^*)} |} \bigcap_{i=1}^d B^{\gamma;l}(i,j) = \bigcup_{j=1}^{| J_{(l_1^*,\ldots,l_d^*)} |} \left( B^{\gamma;l}(1,j) \cap \bigcap_{i=2}^d B^{\gamma;l}(i,j) \right).\label{SecPeq341}
\end{align}
By definition
\[
\Delta B^{\gamma;l}(i,j)\triangleq B^{\gamma;l}(i,j) \setminus \left( \bigcup_{m=1}^{j-1} B^{\gamma;l}(i,m) \right).
\]
Therefore, we have that
\begin{equation}\label{SecPeq342}
B^{\gamma;l}(i,j)=\bigcup_{m_i=1}^{j} \Delta B^{\gamma;l}(i,j).
\end{equation}
Plugging \eqref{SecPeq342} into \eqref{SecPeq341}, we obtain that
\begin{align*}
B^\gamma &= \bigcup_{j=1}^{| J_{(l_1^*,\ldots,l_d^*)} |} \left( \left( \bigcup_{m_1=1}^{j} \Delta B^{\gamma;l}(1,m_1) \right) \cap \bigcap_{i=2}^d B^{\gamma;l}(i,j) \right)\\
&= \bigcup_{m_1=1}^{| J_{(l_1^*,\ldots,l_d^*)} |} \left(\, \bigcup_{j=m_1}^{| J_{(l_1^*,\ldots,l_d^*)} |} \left( \Delta B^{\gamma;l}(1,m_1) \cap \bigcap_{i=2}^d B^{\gamma;l}(i,j) \right) \right)\\
&= \bigcup_{m_1=1}^{| J_{(l_1^*,\ldots,l_d^*)} |} \left( \Delta B^{\gamma;l}(1,m_1) \cap \left( \bigcup_{j=1}^{m_1} \bigcap_{i=2}^d B^{\gamma;l}(i,j) \right) \right).
\end{align*}
Applying the same procedure to $\bigcup_{j=1}^{m_1} \bigcap_{i=2}^d B^{\gamma;l}(i,j)$, we obtain that
\begin{align*}
B^\gamma &= \bigcup_{m_1=1}^{| J_{(l_1^*,\ldots,l_d^*)} |} \bigcup_{m_2=1}^{m_1} \left( \Delta B^{\gamma;l}(1,m_1) \cap \Delta B^{\gamma;l}(2,m_2) \cap \left( \bigcup_{j=1}^{m_2} \bigcap_{i=3}^d B^{\gamma;l}(i,j) \right) \right).
\end{align*}
Iterating the same procedure $d-1$ times, we obtain that
\begin{align}
B^\gamma &= \bigcup_{m_1=1}^{| J_{(l_1^*,\ldots,l_d^*)} |} \bigcup_{m_2=1}^{m_1} \cdots \bigcup_{m_{d-1}=1}^{m_{d-2}} \left( \left(\, \bigcap_{i=1}^{d-1} \Delta B^{\gamma;l}(i,m_i) \right) \cap \left( \bigcup_{j=1}^{m_{d-1}} B^{\gamma;l}(d,j) \right) \right).\label{SecPeq343}
\end{align}
Since $l(1),\ldots,l(| J_{(l_1^*,\ldots,l_d^*)} |)$ are ordered such that $(l(1))_d\leq (l(2))_d\leq\cdots\leq (l(| J_{(l_1^*,\ldots,l_d^*)} |))_d$, we obtain that
\begin{equation}\label{SecPeq344}
\bigcup_{j=1}^{m_{d-1}} B^{\gamma;l}(d,j)=B^{\gamma;l}(d,1).
\end{equation}
Plugging \eqref{SecPeq344} into \eqref{SecPeq343}, we obtain that
\begin{align*}
B^\gamma &= \bigcup_{m_1=1}^{| J_{(l_1^*,\ldots,l_d^*)} |} \bigcup_{m_2=1}^{m_1} \cdots \bigcup_{m_{d-1}=1}^{m_{d-2}} \left( \left(\, \bigcap_{i=1}^{d-1} \Delta B^{\gamma;l}(i,m_i) \right) \cap B^{\gamma;l}(d,1) \right).
\end{align*}
\end{proof}

\begin{proof}[Proof of Theorem \ref{SecMRthm1}]
For the second moment of $Z$ (under the change of measure) we have that
\begin{align*}
\E^{\Q_{\gamma,w}}[Z_n^2] &= \E [Z_n] \\
&= \E\left[ Z_n \1_{B_n^\gamma} \right] + \E\left[ Z_n \1_{(B_n^\gamma)^c} \right]\\
&\leq \frac{1}{1-w}\P(A_n\cap B_n^\gamma)\P(B_n^\gamma) + \frac{1}{w} \P(A_n \cap (B_n^\gamma)^c)\\
&\leq \frac{1}{1-w}\P(A_n)\P(B_n^\gamma) + \frac{1}{w} \P(A_n \cap (B_n^\gamma)^c).\label{sec1eq1}
\end{align*}
Combining this with Lemma \ref{SecMRlem1} we obtain the strong efficiency of our estimator.\\
\end{proof}

\begin{proof}[Proof of Lemma \ref{SecMRlem1}]
First, note that $\P\big(\bar{X}_n\in B^\gamma\big)=\mathcal{O}\big(\mathbb{P}(\bar{X}_n\in A)\big)$ follows immediately from Result \ref{SecPRresult3}.

We need to show the existence of $\gamma$ such that $\P\big(\bar{X}_n\in A\cap(B^\gamma)^c\big)=\omicron(\P(\bar{X}_n\in A)^2)$. Since $A$ is bounded away from $\mathbb{D}_{<(l_1^*,\ldots,l_d^*)}$ by assumption, there exists $r$ such that $d\left( A,\mathbb{D}_{<(l_1^*,\ldots,l_d^*)} \right)\geq r$. On the one hand, from \cite{rheeblanchetzwart2016} we have that
\begin{equation}\label{SecPeq9}
A \subseteq \left\{ \left( \xi^{(1)},\ldots,\xi^{(d)} \right) \,\middle|\, \exists (l_1,\ldots,l_d)\in J_{(l_1^*,\ldots,l_d^*)}: d\left(\xi^{(i)},\mathbb{D}_{< l_i}\right)\geq r,\,\forall i\in\{1,\ldots,d\} \right\}.
\end{equation}
On the other hand, we have that
\begin{align}
\nonumber(B^\gamma)^c = \Bigg\{ \left( \xi^{(1)},\ldots,\xi^{(d)} \right) \,\Bigg|\,& \forall (l_1,\ldots,l_d)\in J_{(l_1^*,\ldots,l_d^*)}:\\
&\exists i\in\{1,\ldots,d\}: \#\big\{ t \,\big|\, \xi^{(i)}(t)-\xi^{(i)}(t^-)>\gamma_i \big\}\leq l_i-1 \Bigg\}.\label{SecPeq10}
\end{align}
Let $\xi=\left( \xi^{(1)},\ldots,\xi^{(d)} \right)\in A \cap (B^\gamma)^c$ be a step function in the set $\prod_{i=1}^d \mathbb{D}_{l_i'}$. By \eqref{SecPeq9}, there exists $(l_1,\ldots,l_d)\in J_{(l_1^*,\ldots,l_d^*)}$, such that $\xi^{(i)}=\sum_{j=1}^{l_i+m_i} c_j^{(i)}\1_{[t_j^{(i)},1]}$, $m_i\in\mathbb{Z}_+$ and $d\left(\xi^{(i)},\mathbb{D}_{< l_i}\right)\geq r$ for all $i\in\{1,\ldots,d\}$ with $l_i\neq0$. Combining $d\left(\xi^{(i)},\mathbb{D}_{< l_i'}\right)\geq r$ with the fact that $\xi^{(i)}=\sum_{j=1}^{l_i-1} c_j^{(i)}\1_{[t_j^{(i)},1]}\in\mathbb{D}_{< l_i}$, we conclude that
\begin{equation}\label{SecPeq11}
\sum_{j=l_i}^{l_i+m_i} c_j^{(i)} \geq d\left( \sum_{j=1}^{l_i+m_i} c_j^{(k)}\1_{[t_j^{(i)},1]},\,\sum_{j=1}^{l_i-1} c_j^{(i)}\1_{[t_j^{(i)},1]} \right)\geq r,
\end{equation}
or in other words, the sum of $m_i+1$ smallest jump is bounded from below by $r$ for each $\xi^{(i)}$ of $\{\xi^{(i)}\}_{i\in\{1,\ldots,d\}}$ satisfying $l_i\neq0$. Combining \eqref{SecPeq10} with \eqref{SecPeq11}, as well as choosing $\gamma_k$ sufficiently small, there exists at least one $k\in\{1,\ldots,d\}$ such that the smallest jump of $\xi^{(k)}$ is bounded from below by $r'>0$ for arbitrary but fixed $m_k$. Repeating the same procedure as described above, we can construct $(m_1,\ldots,m_d)$ for every $(l_1,\ldots,l_d)\in J_{(l_1^*,\ldots,l_d^*)}$, such that the optimization problem, given by
\begin{equation}\label{SecPeq16}
\argmin_{\substack{(l_1,\ldots,l_d)\in\mathbb{Z}_+^d\\\prod_{i=1}^d\mathbb{D}_{l_i}\cap A\cap(B^\gamma)^c\neq\emptyset}} \mathcal{I}(l_1,\ldots,l_d),
\end{equation}
has a unique solution $(l_1^{**},\ldots,l_d^{**})$ satisfying $\mathcal{I}(l_1^{**},\ldots,l_d^{**})>2\mathcal{I}(l_1^*,\ldots,l_d^*)$.
We denote this specific choice of $(m_1,\ldots,m_d)$ for every $(l_1,\ldots,l_d)\in J_{(l_1^*,\ldots,l_d^*)}$ by $\left\{ m^{(l_1,\ldots,l_d)} \right\}_{(l_1,\ldots,l_d)\in J_{(l_1^*,\ldots,l_d^*)}}$.
It should be noted that the existence and the uniqueness of $(l_1^{**},\ldots,l_d^{**})$ can be guaranteed by enlarging the set $A$ (since we are looking for an upper bound for $\P(\bar{X}_n\in A\cap(B^\gamma)^c)$), together with choosing the corresponding $\gamma_i$ sufficiently small.
Therefore, it remains to show that, under the chosen $\gamma$, the set $A\cap(B^\gamma)^c$ is bounded away from $\mathbb{D}_{<(l_1^{**},\ldots,l_d^{**})}$. Select $\xi$ satisfying $d\left( \xi,\mathbb{D}_{<(l_1^{**},\ldots,l_d^{**})} \right)<\delta$, and hence, there exists $\theta\in\mathbb{D}_{<(l_1^{**},\ldots,l_d^{**})}$ such that $d(\xi,\theta)<\delta$. On the one hand, combining $d(\xi,\theta)<\delta$ with \eqref{SecPeq9}, there exists $(l_1,\ldots,l_d)\in J_{(l_1^*,\ldots,l_d^*)}$ such that $d\left(\theta^{(i)},\mathbb{D}_{< l_i}\right) > r-\delta$, for all $i\in\{1,\ldots,d\}$. Hence, we have that $\theta^{(i)}=\sum_{j=1}^{l_i+m_i} c_j^{(i)}\1_{[t_j^{(i)},1]}$, $m_i\in\mathbb{Z}_+$, satisfying
\begin{equation}\label{SecPeq14}
\sum_{j=l_i}^{l_i+m_i} c_j^{(i)} \geq d\left( \sum_{j=1}^{l_i+m_i} c_j^{(k)}\1_{[t_j^{(i)},1]},\,\sum_{j=1}^{l_i-1} c_j^{(i)}\1_{[t_j^{(i)},1]} \right)\geq r-\delta,
\end{equation}
for all $i\in\{1,\ldots,d\}$ with $l_i\neq0$. On the other hand, there exist homeomorphisms $\{\lambda_i\}_{i\in\{1,\ldots,d\}}$ such that
\begin{equation}\label{SecPeq12}
||\,\lambda_i-id\,||_{\infty} \vee ||\,\theta^{(i)}\circ\lambda_i-\xi^{(i)}\,||_{\infty} < \delta,
\end{equation}
for $i\in\{1,\ldots,d\}$. Combining \eqref{SecPeq12} with \eqref{SecPeq10}, we conclude the existence of at least one $i\in\{1,\ldots,d\}$ such that
\begin{equation}\label{SecPeq13}
\#\big\{ t \,\big|\, \xi^{(i)}(t)-\xi^{(i)}(t^-)>\gamma_i-\delta \big\}\leq l_i-1.
\end{equation}
Since $\theta\in\mathbb{D}_{<(l_1^{**},\ldots,l_d^{**})}$, we have that
\begin{equation}\label{SecPeq15}
m_i \leq (m^{(l_1,\ldots,l_d)})_i-1,
\end{equation}
for all $i\in\{1,\ldots,d\}$ with $l_i\neq0$. Finally, by \eqref{SecPeq14}, \eqref{SecPeq13} and the choice of $\gamma$, we conclude that choosing $\delta$ sufficiently small leads to contradiction of \eqref{SecPeq15}.
\end{proof}

\begin{proof}[Proof of Proposition \ref{SecAAQNprop1}]
We derive a necessary and sufficient condition for $c^T\phi(\xi)(1)\geq a$ with $\xi$ \eqref{SecAAQNeq4}.

For the ``only if''-part, suppose that $\partial_z(\mathcal J)>a$. Let $(v_1,\ldots,v_d)\in\mathbb{R}_+^d$, $\delta\in(0,1)$ and $\xi$ be such that
\[
\xi^{(i)}(t)=
\begin{cases}
-(Rr')_i t,\,t\in[0,1], &\text{for\ }i\notin\mathcal{J},\\[5pt]
v_i\1_{[\delta,1]}(t)-(Rr')_i t,\,t\in[0,1],\quad &\text{for\ }i\in\mathcal{J}.
\end{cases}
\]
Obviously $\xi$ satisfies \eqref{SecAAQNeq4}. For $t\in[0,\delta)$, by Result \ref{SecAAQNresult3}, the regulator process $y_{\xi}\triangleq\psi(\xi)$ should satisfy the fixed point equation that is given by
\[
y_{\xi}^{(i)}(t)=\max\left\{ 0,\sup_{s\in[0,t]}\sum_{j\neq i}Q_{ji}y_{\xi}^{(j)}(s)+(Rr')_i s \right\},\text{ for all }i\in\{1,\ldots,d\}.
\]
Using the fact that $r'>0$, we obtain that $y_{\xi}(t)=r' t$, for $t\in[0,\delta)$. For $t\in[\delta,1]$, again by Result \ref{SecAAQNresult3}, it holds that
\begin{equation}\label{SecPeq17}
y_{\xi}^{(i)}(t)=\max\left\{0,-v_i+\sup_{s\in[0,t]}r_i's+\sum_{j\neq i}Q_{ji}(y_{\xi}^{(j)}(s)-r_j's)\right\},\text{\quad for all\ }i\in\mathcal{J},
\end{equation}
and
\begin{equation}\label{SecPeq18}
y_{\xi}^{(i)}(t)=\max\left\{0,\sup_{s\in[0,t]}r_i's+\sum_{\substack{j\in\mathcal{J}}}Q_{ji}(y_{\xi}^{(j)}(s)-r_j's)+\sum_{\substack{j\neq i\\j\notin\mathcal{J}}}Q_{ji}(y_{\xi}^{(j)}(s)-r_j's)\right\},\text{\quad for all\ }i\notin\mathcal{J}.
\end{equation}
Since $\{v_i\}_{i\in\mathcal{J}}$ are non-negative, by Result \ref{SecAAQNresult2}, we conclude that $y_{\xi}(s)$, as well as $r_i's+\sum_{j\neq i}Q_{ji}(y_{\xi}^{(j)}(s)-r_j's)$ are continuous in $s$ on $[0,1]$. Using the Bolzano-Weierstrass theorem, there exists a set of sufficiently large $\{v_i\}_{i\in\mathcal{J}}$ (depending on $y_\xi$), such that $y_{\xi}^{(i)}(t)=y_{\xi}^{(i)}(\delta)=r_i'\delta$ for $i\in\mathcal{J}$. Plugging this into \eqref{SecPeq18} along with setting $y_{\xi}^{(i)}(t)=r_i'\delta+r_i^*(t-\delta)$ for $i\notin\mathcal J$, $t\in[\delta,1]$, we obtain that
\begin{align}
\nonumber r_i'\delta+r_i^*(t-\delta)&=\max\left\{ 0,\,\sup_{s\in[0,t]}r_i's + \sum_{\substack{j\in\mathcal{J}}}Q_{ji}(y_{\xi}^{(j)}(s)-r_j's) + \sum_{\substack{j\neq i\\j\notin\mathcal{J}}}Q_{ji}(y_{\xi}^{(j)}(s)-r_j's) \right\}\\
&=\max\left\{ r_i'\delta,\,r_i'\delta+\max_{s\in[\delta,t]}r_i'(s-\delta) - \sum_{j\neq i}Q_{ji}r_j'(s-\delta) + \sum_{\substack{j\neq i\\j\notin\mathcal{J}}}Q_{ji}r_j^*(s-\delta) \right\},\text{\quad for all\ }i\in\mathcal{J}.\label{SecPeq20}
\end{align}
Note that \eqref{SecPeq20} is solved by $r_i^*$ satisfying \eqref{SecAAQNeq5}. Moreover, by a straightforward computation, for the workload process $z_\xi\triangleq\phi(\xi)$, we obtain that $c^Tz_\xi(1)=\partial_z(\mathcal J)(1-\delta)$. Since by assumption $\partial_z(\mathcal J)>a$, we can choose $\delta$ such that $c^Tz_\xi(1)\geq a$.

For the other direction of the proof, suppose that $c^T\phi(\xi)(1)\geq a$ for some $\xi$ satisfying \eqref{SecAAQNeq4}. Let the jump sizes and the associated jump times of $\xi$ be denoted by $\{u_i\}_{i\in\mathcal J}$ and $\{t_i\}_{i\in\mathcal J}$, respectively. First we should mention that, by Result \ref{SecAAQNresult4}, enlarging $\{u_i\}_{i\in\mathcal J}$ will preserve the fact that $c^T\phi(\xi)(1)\geq a$. Moreover, let $d_1<\cdots< d_m$ denote the discontinuity points of $\xi$ with $m\leq |\mathcal{J}|$ and define $\mathcal J_i\triangleq\{ k \,|\, t_k\leq d_i \}$, for every $i\in \{1,\ldots,m\}$. Now observe that $y_{\xi}(t)=r't$, $t\in[0,d_1)$. Hence, we have that $z_\xi'(t)=0\leq\partial_z(\mathcal J)$, for $t\in[0,d_1)$. For $y_{\xi}(t)$, $t\in[d_1,d_2)$, we can easily check that
\[
y_\xi^{(i)}(t)=
\begin{cases}r_i'd_1,&\text{for all\ }i\in\mathcal J_1,\\[5pt]
r_i'd_1+r_i^{*,1}(t-d_1),\quad &\text{for all\ }i\notin\mathcal J_1,
\end{cases}
\]
by taking sufficiently large $\{u_i\}_{i\in\mathcal J_1}$, where
\[r_i^{*,1}=\max\left\{ r_i'-\sum_{j\neq i}Q_{ji}r_j'+\sum_{\substack{j\neq i\\j\notin\mathcal{J}_1}}Q_{ji}r_j^{*,1},0 \right\},\text{\quad for\ }i\notin\mathcal{J}_1.\]
Since $\mathcal J_1\subseteq\mathcal J$, by Result \ref{SecAAQNresult4} and \eqref{SecAAQNeq5}, we conclude that $z_\xi'(t)\leq\partial_z(\mathcal J)$, for $t\in[d_1,d_2)$. Defining $\mathcal J_1'\triangleq\mathcal J_1\cup\{k \,|\, r_k^{*,1}=0 \}$, we consider $y_\xi(t)$ for $t\in[d_2,d_3)$. Following a similar argument as above, we claim that
\[
y_\xi^{(i)}(t)=
\begin{cases}
r_i'd_1, &\text{for all\ }i\in\mathcal J_1',\\[5pt]
r_i'd_1+r_i^{*,1}(d_2-d_1), &\text{for all\ }i\in\mathcal J_2 \setminus \mathcal J_1',\\[5pt]
r_i'd_1+r_i^{*,1}(d_2-d_1)+r_i^{*,2}(t-d_2),\quad &\text{for all\ }i\notin\mathcal J_1' \cup \mathcal J_2,
\end{cases}
\]
for sufficiently large $\{u_i\}_{i\in\mathcal J_1\cup\mathcal J_2}$, where
\[r_i^{*,2}=\max\left\{ r_i'-\sum_{j\neq i}Q_{ji}r_j'+\sum_{\substack{j\neq i\\j\notin\mathcal{J}_1'\cup\mathcal{J}_2}}Q_{ji}r_j^{*,2},0 \right\},\text{\quad for\ }i\notin\mathcal{J}_1'\cup\mathcal{J}_2.\]
Consider the fixed point equation that is given by
\begin{equation}\label{SecPeq50}
\tilde{r}_i^{*,2}=\max\left\{ r_i'-\sum_{j\neq i}Q_{ji}r_j'+\sum_{\substack{j\neq i\\j\notin\mathcal{J}_1\cup\mathcal{J}_2}}Q_{ji} \tilde{r}_j^{*,2},0 \right\},\text{\quad for\ }i\notin\mathcal{J}_1\cup\mathcal{J}_2.
\end{equation}
Since $\mathcal J_1\subseteq\mathcal J_1\cup \mathcal J_2$, by Result \ref{SecAAQNresult4}, we obtain that $\tilde{r}_k^{*,2}=0$, for every $k\in\mathcal J_1'\setminus\mathcal J_1$. By making the convention that $r_k^{*,2}=0$ for $k\in\mathcal J_1'\setminus\mathcal J_1$, we claim that $r_i^{*,2}=\tilde{r}_i^{*,2}$, for $i\notin\mathcal{J}_1\cup\mathcal{J}_2$. Since $\mathcal J_1\cup\mathcal J_2\subseteq\mathcal J$, by Result \ref{SecAAQNresult4}, \eqref{SecPeq50} and \eqref{SecAAQNeq5}, we conclude that $z_\xi'(t)\leq\partial_z(\mathcal J)$, for $t\in[d_2,d_3)$. Iterating the same procedure $m$ more times, we can construct a $\xi$ (by taking $\{u_i\}_{i\in\mathcal J}$ sufficiently large) such that $z_\xi$ is piecewise linear between neighboring discontinuity points. Moreover, the increasing rate of $z_\xi$ is less than $\partial_z(\mathcal J)$, i.e.\ $z_\xi'(t)\leq\partial_z(\mathcal J)$, for $t\in[0,1]$. Therefore, we obtain that $\partial_z(\mathcal J)>a$.

The last statement of Proposition \ref{SecAAQNprop1} is a consequence of Result \ref{SecAAQNresult4}.
\end{proof}

\begin{proof}[Proof of Proposition \ref{SecAAQNprop2}]
Let the unique optimal solution of \eqref{SecAAQNeq7} be denoted by $(l_1^*,\ldots,l_d^*)$. 
To prove that $A$ is bounded away from
\[
\bigcup_{(l_1,\ldots,l_d)\in\mathcal{I}_{<l_1^*,\ldots,l_d^*}}\prod_{i=1}^d\mathbb{L}_{l_i}(\mu_i),
\]
it is sufficient to show that $A=F^{-1}[a,\infty)$ is bounded away from $\prod_{i=1}^d\mathbb{L}_{l_i}(\mu_i)$ for all $(l_1,\ldots,l_d)\in\mathcal{I}_{<l_1^*,\ldots,l_d^*}$.
To begin with, let $(l_1,\ldots,l_d)\in\mathcal{I}_{<l_1^*,\ldots,l_d^*}$.
Under Assumption \ref{SecAAQNass1} we have that $\partial_z(\mathcal{I})<a$, where $j\in\mathcal{I}$ if and only if $l_j\neq0$.
Applying a similar approach as in the proof of Proposition \ref{SecAAQNprop1}, it can be shown that $F\left( \prod_{i=1}^d\mathbb{L}_{l_i}(\mu_i) \right)\subseteq(-\infty,\partial_z(\mathcal{I})]$.
This implies that there exists $\delta>0$ satisfying
\begin{equation}\label{SecPeq30}
d\left( F\left(\prod_{i=1}^d\mathbb{L}_{l_i}(\mu_i)\right),[a,\infty) \right)>\delta.
\end{equation}
Moreover, by Result \ref{SecAAQNresult1} we conclude that the mapping $F$ as composition of Lipschitz continuous mappings (for continuity of $\pi_1$ see e.g.\ Theorem 12.5 in \cite{billingsley2013}) is again Lipschitz continuous.
Let $K_F$ denote the Lipschitz constant of $F$.
Combining this with \eqref{SecPeq30} we conclude that $d\left( \prod_{i=1}^d\mathbb{L}_{l_i}(\mu_i),F^{-1}([a,\infty)) \right)>\delta/K_F$, hence the second statement is obtained by applying Result \ref{SecPRresult3}.
\end{proof}

\bibliographystyle{abbrv}
\bibliography{bib}

\begin{thebibliography}{10}

\bibitem{asmussenglynn2007}
S.~Asmussen and P.~Glynn.
\newblock {\em Stochastic simulation: Algorithms and analysis}, volume~57 of
  {\em Stochastic Modelling and Applied Probability}.
\newblock Springer-Verlag New York, 1st edition, 2007.

\bibitem{asmussenschmidlischmidt1999}
S.~Asmussen, H.~Schmidli, and V.~Schmidt.
\newblock Tail probabilities for non-standard risk and queueing processes with
  subexponential jumps.
\newblock {\em Advances in Applied Probability}, 31(2):422--447, 1999.

\bibitem{billingsley2013}
P.~Billingsley.
\newblock {\em Convergence of probability measures}.
\newblock John Wiley \& Sons, 2013.

\bibitem{blanchetglynn2008}
J.~Blanchet and P.~Glynn.
\newblock Efficient rare-event simulation for the maximum of heavy-tailed
  random walks.
\newblock {\em The Annals of Applied Probability}, 18(4):1351--1378, August
  2008.

\bibitem{blanchetglynnliu2007}
J.~Blanchet, P.~Glynn, and J.~Liu.
\newblock Efficient rare event simulation for heavy-tailed multiserver queues.
\newblock Technical report, 2007.

\bibitem{blanchetliu2008}
J.~Blanchet and J.~Liu.
\newblock State-dependent importance sampling for regularly varying random
  walks.
\newblock {\em Advances in Applied Probability}, 40(4):1104--1128, Dezember
  2008.

\bibitem{blanchetliu2010}
J.~Blanchet and J.~Liu.
\newblock Efficient importance sampling in ruin problems for multidimensional
  regularly varying random walks.
\newblock {\em Journal of Applied Probability}, 47(2):301--322, June 2010.

\bibitem{debickimandjes2015}
K.~Debicki and M.~Mandjes.
\newblock {\em Queues and {L}\'{e}vy Fluctuation Theory}.
\newblock Universitext. Springer International Publishing, 1st edition, 2015.

\bibitem{dupuislederwang2007}
P.~Dupuis, K.~Leder, and H.~Wang.
\newblock Importance sampling for sums of random variables with regularly
  varying tails.
\newblock {\em ACM Transitions on Modeling and Computer Simulation}, 17(3),
  July 2007.

\bibitem{dupuissezerwang2007}
P.~Dupuis, A.~D. Sezer, and H.~Wang.
\newblock Dynamic importance sampling for queueing networks.
\newblock {\em The Annals of Applied Probability}, 17(4):1306--1346, August
  2007.

\bibitem{dupuiswang2004}
P.~Dupuis and H.~Wang.
\newblock Importance sampling, large deviations, and differential games.
\newblock {\em Stochastics and Stochastic Reports}, 76(6):481--508, 2004.

\bibitem{dupuiswang2005}
P.~Dupuis and H.~Wang.
\newblock On the convergence from discrete to continuous time in an optimal
  stopping problem.
\newblock {\em The Annals of Applied Probability}, 15(2):1339--1366, 2005.

\bibitem{dupuiswang2009}
P.~Dupuis and H.~Wang.
\newblock Importance sampling for {J}ackson networks.
\newblock {\em Queueing Systems}, 62(1):113--157, 2009.

\bibitem{embrechtsklueppelbergmikosch1997}
P.~Embrechts, C.~Kl{\"u}ppelberg, and T.~Mikosch.
\newblock {\em Modelling extremal events for insurance and finance}, volume~33
  of {\em Stochastic Modelling and Applied Probability}.
\newblock Springer-Verlag Berlin Heidelberg, 1st edition, 1997.

\bibitem{fosskorshunov2006}
S.~Foss and D.~Korshunov.
\newblock Heavy tails in multi-server queue.
\newblock {\em Queueing Systems: Theory and Applications}, 52(1):31--48,
  January 2006.

\bibitem{fosskorshunov2012}
S.~Foss and D.~Korshunov.
\newblock On large delays in multi-server queues with heavy tails.
\newblock {\em Mathematics of Operations Research}, 37(2):201--218, 2012.

\bibitem{fosskorshunovzachary2013}
S.~Foss, D.~Korshunov, and S.~Zachary.
\newblock {\em An introduction to heavy-tailed and subexponential
  distributions}, volume~38 of {\em Springer Series in Operations Research and
  Financial Engineering}.
\newblock Springer-Verlag New York, 2nd edition, 2013.

\bibitem{glassermankou1995}
P.~Glasserman and S.-G. Kou.
\newblock Analysis of an importance sampling estimator for tandem queues.
\newblock {\em ACM Transactions on Modeling and Computer Simulation},
  5(1):22--42, January 1995.

\bibitem{glassermanwang1997}
P.~Glasserman and Y.~Wang.
\newblock Counterexamples in importance sampling for large deviations
  probabilities.
\newblock {\em The Annals of Applied Probability}, 7(3):731--746, August 1997.

\bibitem{harrisonreiman1981}
J.~M. Harrison and M.~I. Reiman.
\newblock Reflected {B}rownian motion on an orthant.
\newblock {\em The Annals of Probability}, 9(2):302--308, April 1981.

\bibitem{hucui2010}
X.~Hu and H.~Cui.
\newblock Generating multi-dimensional discrete distribution random number.
\newblock In {\em 2010 Sixth International Conference on Natural Computation},
  volume~3, pages 1102--1104, August 2010.

\bibitem{ramasubramanian2000}
S.~Ramasubramanian.
\newblock A subsidy-surplus model and the skorokhod problem in an orthant.
\newblock {\em Mathematics of Operations Research}, 25(3):509--538, 2000.

\bibitem{rheeblanchetzwart2016}
C.-H. Rhee, J.~Blanchet, and B.~Zwart.
\newblock Sample path large deviations for heavy-tailed {L}\'{e}vy processes
  and random walks.
\newblock {\em eprint arXiv:1606.02795}, 2016.

\bibitem{skorokhod1962a}
A.~V. Skorokhod.
\newblock Stochastic equations for diffusion processes in a bounded region.
\newblock {\em Theory of Probability \& Its Applications}, 6(3):264--274, 1961.

\bibitem{skorokhod1962b}
A.~V. Skorokhod.
\newblock Stochastic equations for diffusion processes in a bounded region.
  {II}.
\newblock {\em Theory of Probability \& Its Applications}, 7(1):3--23, 1962.

\bibitem{tankovcont2015}
P.~Tankov and R.~Cont.
\newblock {\em Financial Modelling with Jump Processes, Second Edition}.
\newblock Chapman and Hall/CRC Financial Mathematics Series. Taylor \& Francis,
  2nd edition, 2015.

\bibitem{whitt2002}
W.~Whitt.
\newblock {\em Stochastic-Process Limits}.
\newblock Springer Series in Operations Research and Financial Engineering.
  Springer-Verlag New York, 1st edition, 2002.

\end{thebibliography}

\end{document}